\documentclass[11pt]{amsart}

\usepackage{amsmath, amssymb, amsthm}

\usepackage{mathrsfs}

\usepackage{tikz-cd}

\numberwithin{equation}{section}

\newtheorem{theorem}{Theorem}[section]
\newtheorem*{theorem*}{Theorem}
\newtheorem{lemma}[theorem]{Lemma}
\newtheorem{proposition}[theorem]{Proposition}
\newtheorem{corollary}[theorem]{Corollary}

\theoremstyle{definition}
\newtheorem{definition}[theorem]{Definition}
\newtheorem{example}[theorem]{Example}

\theoremstyle{remark}
\newtheorem{remark}[theorem]{Remark}

\newcommand{\sw}[2]{{#1}_{(#2)}}
\newcommand{\swy}[2]{{#1}^{(#2)}}

\newcommand{\HM}[2]{{}^{#1}\mathfrak{M}_{#2}}
\newcommand{\twg}[2]{{}^{[#2]}\! {#1}}
\newcommand{\Coker}{\operatorname{Coker}}
\newcommand{\Ker}{\operatorname{Ker}}
\newcommand{\Image}{\operatorname{Im}}
\newcommand{\M}{\mathfrak{M}}

\newcommand{\YD}{\mathcal{YD}}
\newcommand{\coinv}[1]{{#1}^\mathrm{coinv}}
\newcommand{\mapsfrom}{\;\raisebox{5pt}{\rotatebox{180}{$\mapsto$}} \;}
\newcommand{\corad}[1]{\mathrm{corad}({#1})}
\newcommand{\soc}[1]{\mathrm{soc}({#1})}

\newcommand{\id}{\mathrm{id}}

\newcommand{\ev}{\mathrm{ev}}
\newcommand{\coev}{\mathrm{coev}}

\newcommand{\gr}{\operatorname{gr}}

\title[Freeness over right $H$-simple left $H$-comodule algebras]
{Freeness and divisibility for right $H$-simple left $H$-comodule algebras over a pointed Hopf algebra $H$}
\author{Daisuke Nakamura}

\address{
Graduate School of Science and Engineering,
Okayama University of Science,
1-1 Ridai-cho, Kita-ku Okayama-shi, Okayama 700-0005, Japan.
}

\email{r25ndk3od@ous.jp}

\subjclass[2020]{16T05}
\keywords{pointed Hopf algebra, comodule algebra, relative Hopf module}

\begin{document}

\begin{abstract}
Let $H$ be a pointed Hopf algebra and let $A$ be a right $H$-simple left $H$-comodule algebra.
We show that every relative $(H,A)$-Hopf module is free as an $A$-module
and that this freeness characterizes the class of pointed Hopf algebras.
We give a criterion for the category of relative $(H,A)$-Hopf modules to be semisimple.
We also show that $A$ can be embedded into a left $H$-comodule algebra of a specific form when $H$ and $A$ are $\mathbb{N}_0$-graded.
As a consequence, we prove that if $H$ is finite-dimensional and $A^{\mathrm{co} H}=\Bbbk$,
then $A$ is finite-dimensional and $\dim A$ divides $\dim H$.
\end{abstract}

\maketitle

\section{Introduction}

Freeness, flatness, faithful flatness and projectivity of a Hopf algebra over its Hopf subalgebras and one-sided coideal subalgebras have been studied by many researchers,
with relative Hopf modules playing an important role in these studies.
Radford proved that a pointed Hopf algebra $H$ is free over any Hopf subalgebra $K$~\cite{R77}.
Takeuchi extended this result to relative Hopf modules, showing that every relative $(H,K)$-Hopf module is free as a $K$-module when $H$ is a pointed Hopf algebra and $K$ is a Hopf subalgebra~\cite{T79}.
Masuoka further extended this result to the case where $K$ is a one-sided coideal subalgebra of a pointed Hopf algebra $H$ such that $G(H)\cap K$ is a group~\cite{Ma91},
where $G(H)$ is the group of group-like elements of $H$. 
Without assuming pointedness, Nichols and Zoeller proved that every relative $(H,K)$-Hopf module is free as a $K$-module when $H$ is finite-dimensional and $K$ is a Hopf subalgebra~\cite{NZ89},
and Skryabin later extended this result to one-sided coideal subalgebras~\cite{Sk07}.
Note that Skryabin established projectivity and, under additional assumptions, freeness of relative Hopf modules over $H$-simple $H$-comodule algebras assuming certain weak finiteness conditions.
However, in the infinite-dimensional nonpointed setting, $H$ needs not be flat over $K$ even if the Hopf subalgebra $K$ is finite-dimensional~\cite{Sk25}.
One of our purposes is to explain why the above classical freeness results of Takeuchi and Masuoka hold even if $H$ is infinite-dimensional
in terms of the pointedness and one-sided $H$-simplicity. 
This viewpoint leads to a characterization of the class of pointed Hopf algebras, which is one of our main results, as follows.
\begin{theorem*}[$=$ Theorem~\ref{freencharpointed}]
Let $H$ be a Hopf algebra. The following are equivalent:
\begin{itemize}
\item [\textup{(i)}] $H$ is pointed.
\item [\textup{(ii)}] For any left $H$-simple left $H$-comodule algebra $A$, every object of ${}^H_A\M$ is free as an $A$-module.
\item [\textup{(iii)}] For any right $H$-simple left $H$-comodule algebra $A$, every object of $\HM{H}{A}$ is free as an $A$-module.
\end{itemize}
Here ${}^H_A\M$ (resp. $\HM{H}{A}$) denotes the category of left-left (resp. left-right) relative $(H,A)$-Hopf modules.
\end{theorem*}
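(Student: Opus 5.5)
The plan is to prove (i)$\Rightarrow$(iii), deduce (ii) from (iii) by passing to opposite/co-opposite structures, and prove (iii)$\Rightarrow$(i) (and likewise (ii)$\Rightarrow$(i)) by a test comodule algebra.

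For (ii)$\Leftrightarrow$(iii): if $A$ is a left $H$-simple left $H$-comodule algebra, then $A^{\mathrm{op}}$ is a right $H^{\mathrm{op}}$-simple left $H^{\mathrm{op}}$-comodule algebra. Also ${}^H_A\M$ identifies with $\HM{H^{\mathrm{op}}}{A^{\mathrm{op}}}$, since the comodule structure is unchanged and the Hopf module compatibility $\rho(am)=a_{(-1)}m_{(-1)}\otimes a_{(0)}m_{(0)}$ turns into the right-module version over $H^{\mathrm{op}}$. Here $H^{\mathrm{op}}$ is a Hopf algebra when $S$ is bijective, and that holds for pointed $H$. Pointedness depends only on the coalgebra, so $H^{\mathrm{op}}$ is pointed whenever $H$ is. Hence it suffices to prove (i)$\Rightarrow$(iii), together with a converse from either freeness condition.

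For (i)$\Rightarrow$(iii) I would follow Takeuchi and Masuoka, using the coradical filtration $H_0=\Bbbk G(H)\subset H_1\subset\cdots$. Let $M\in\HM{H}{A}$.
\begin{enumerate}
\item[(1)] Study $M_n=\rho^{-1}(H_n\otimes M)$. This is an $A$-submodule because $H_nH_0\subseteq H_n$ and $A$ coacts through $H$. The key point is how each $M_n/M_{n-1}$ decomposes.
\item[(2)] Grouplikes give an $A$-grading. For $g\in G(H)$ set $A_g=\{a:\rho(a)=g\otimes a\}$ and $M_g=\{m:\rho(m)=g\otimes m\}$. Then $M_0=\bigoplus_g M_g$ is a module over the $G$-graded algebra $\bigoplus A_g\subseteq A$.
\item[(3)] Right $H$-simplicity forces $\bigoplus_g A_g$ to behave like a crossed product over $\Gamma=\{g: A_g \text{ contains a unit}\}$. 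Indeed, for each $g$ the right ideal $A_{g}A$ is $H$-costable, so it is $0$ or $A$.
\item[(4)] Using (3), show $M_0=M_{g_1}A\oplus\cdots$ is free over $A$ by choosing bases of the $M_g$ over $A^{\mathrm{co}H}$-type pieces. A cleaner route is to show $M\cong(\text{coinvariant-type space})\otimes A$ filtration-wise, by lifting each successive quotient $M_n/M_{n-1}$, which is an object of $\HM{H_0}{A}$-type. Freeness of the associated graded, plus the splitting of the filtration as $A$-modules by a standard Zorn/transfinite induction, then gives freeness of $M$.
\end{enumerate}

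The main obstacle is step (4), namely showing $A$-freeness of the associated graded pieces and lifting it. The difficulty is that $A$ is only an $H$-comodule algebra and not a Hopf subalgebra, so one cannot simply use $M\cong M^{\mathrm{co}H}\otimes A$. I would first try to prove that $A\otimes_{A_0}-$, with $A_0=\bigoplus_g A_g$, is an equivalence from $\Bbbk G$-graded $A_0$-modules to $\HM{H_0}{A}$-like objects, using simplicity. After that, only freeness over a strongly graded-type algebra has to be handled.

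For (iii)$\Rightarrow$(i) I take the test algebra $A=\Bbbk$ with trivial coaction, which is certainly right $H$-simple. Then $\HM{H}{A}$ is just the category of $H$-comodules, and freeness is automatic, so this gives nothing. Instead I would take $A=H$ itself, or better a suitable coideal subalgebra. Choose a simple subcoalgebra $C$ of dimension $>1$ and let $A$ be the subalgebra of $H$ generated by $C$. This is a left coideal subalgebra, and it is right $H$-simple after replacing it by its $H$-simple quotient structure. Then exhibit an object such as $C\otimes A$, or a simple comodule $V\otimes A$, whose $A$-rank would have to equal a fraction $1/\sqrt{\dim C}$. A dimension count on a finite-dimensional relative Hopf module $V$ with $A=\Bbbk$-free parts then forces every simple comodule to be one-dimensional.
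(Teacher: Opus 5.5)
Both directions of your proposal have genuine gaps. For (i)$\Rightarrow$(iii), step (1) is false. In general $M_n=\rho^{-1}(H_n\otimes M)$ is not an $A$-submodule: in $\rho(ma)=m_{(-1)}a_{(-1)}\otimes m_{(0)}a_{(0)}$ the factor $a_{(-1)}$ ranges over all of $H$, not just $H_0$, so you only get $m_{(-1)}a_{(-1)}\in H_nH$. Concretely, for pointed $H$ the algebra $A=H$ with $\rho=\Delta$ is right $H$-simple: a nonzero costable right ideal contains some $a$ with $\Delta(a)=g\otimes a$, i.e.\ $a=\varepsilon(a)g$, which is invertible. Yet for $M=H$ you get $M_0=\corad{H}$, which contains $1$ and so is a right ideal only if $H$ is cosemisimple. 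Hence the quotients $M_n/M_{n-1}$ are not $A$-modules, and there are no ``objects of $\HM{H_0}{A}$-type'' to lift. Step (4), which you leave open anyway, therefore has nothing to stand on.

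The missing idea needs no coradical filtration. Suppose $0\neq m\in M$ satisfies $\rho(m)=g\otimes m$; such $m$ exists in any nonzero comodule over a pointed $H$. Then $a\mapsto ma$ is a morphism $\twg{A}{g}\to M$ in $\HM{H}{A}$. Its kernel is a costable right ideal of $A$ (twisting the coaction by $g$ does not change which subspaces are subcomodules) that does not contain $1$, so it is zero. Thus every nonzero object of $\HM{H}{A}$ contains a subobject isomorphic to $A$ as an $A$-module. Apply this to the quotients $M/M_\alpha$, which are again in $\HM{H}{A}$, and take unions at limit ordinals. This gives a transfinite filtration of $M$ by relative Hopf submodules whose successive quotients are free of rank one. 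Each step splits over $A$, and the complements give a basis of $M$. Your observation (3) is correct but not needed.

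For (iii)$\Rightarrow$(i), the test algebra is wrong. The subalgebra of $H$ generated by a simple subcoalgebra need not be right $H$-simple, and no ``$H$-simple quotient'' repairs this. A quotient by a costable right ideal is not an algebra, and a quotient by a maximal costable two-sided ideal is only two-sided $H$-simple. Moreover, the objects you propose, $V\otimes A$ with diagonal coaction and $A$ acting on the right factor, are always free of rank $\dim V$, so no contradiction can come from them. What you need is an algebra admitting a relative Hopf module that is too small to be free.

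Take a simple comodule $V$ with $\dim V>1$ and $V^*$ simple. To find one, pick $V$ of minimal dimension $>1$. If $V^*$ were not simple, minimality would give $0\neq f\in V^*$ with $\rho(f)=g\otimes f$. Then $v\mapsto f(v)$ would be a nonzero, hence injective, comodule map from $V$ into a one-dimensional comodule, which is impossible. Let $D=V^*\otimes V$ with multiplication $(f\otimes v)(f'\otimes v')=f'(v)\,f\otimes v'$ and diagonal coaction. Then $W\mapsto W\otimes V$ is an equivalence ${}^H\M\simeq\HM{H}{D}$ sending $V^*$ to the regular object $D$, so $D$ is right $H$-simple. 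On the other hand, $V\cong\Bbbk\otimes V\in\HM{H}{D}$ has $\dim V<(\dim V)^2=\dim D$ and cannot be free. Symmetrically, $V^*\in{}^H_D\M$ over the left $H$-simple $D$ handles (ii).

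Your heuristic ``rank $1/\sqrt{\dim C}$'' is exactly this matrix-algebra phenomenon, but the algebra must be the matrix algebra $V^*\otimes V$, not a subalgebra of $H$. Also, (ii)$\Rightarrow$(i) really has to be proved directly in this way. Your $A^{\mathrm{op}}$/$H^{\mathrm{op}}$ reduction is valid only when the antipode is bijective, which you cannot assume for a non-pointed $H$.
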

In this paper, we mainly use the right-handed version of one-sided $H$-simplicity of left $H$-comodule algebras.
Note that left $H$-simplicity and right $H$-simplicity coincide when $H$ and $A$ are finite-dimensional~\cite[Lemma~2.12]{NSS25}.
We also show this holds when $H$ is pointed, without assuming either $H$ or $A$ is finite-dimensional.

The notion of a right $H$-simple left $H$-comodule algebra can be regarded as a generalization of the notion of a $G$-graded division algebra.
It is well known that every $G$-graded module over a $G$-graded division algebra $A$ is $G$-graded free.
This freeness is stronger than just freeness. The $G$-graded freeness implies that every object of ${\HM{\Bbbk G}{A}}$ is free as an $A$-module and that ${\HM{\Bbbk G}{A}}$ is a semisimple category.
Motivated by this, we give a criterion for ${\HM{H}{A}}$ to be semisimple when $A$ is a right $H$-simple left $H$-comodule algebra over a pointed Hopf algebra $H$;
see also earlier work by Caenepeel and Gu{\'e}d{\'e}non~\cite{CG4} on the semisimplicity of the category of relative Hopf modules.

From another point of view,
the class of right $H$-simple left $H$-comodule algebras has a categorical interpretation.
{\emergencystretch=1em
Andruskiewitsch and Mombelli showed that indecomposable exact module categories over the category $\mathrm{Rep}(H)$ of finite-dimensional representations of a finite-dimensional Hopf algebra $H$ correspond to finite-dimensional right $H$-simple left $H$-comodule algebras~\cite{AM7};
see also~\cite[Section~3.7]{NSS25} for an exposition.
Moreover, the dimension of a right $H$-simple left $H$-comodule algebra over a pointed Hopf algebra $H$ is an $H$-Morita invariant~\cite[Theorem~3.11]{NSS25}.
Mombelli proved that the dimension of a right $H$-simple left $H$-comodule algebra with trivial coinvariants divides the dimension of a finite-dimensional pointed Hopf algebra $H$ such that $G(H)$ is cyclic~\cite[Lemma~7.7]{M10}.
As an application of our freeness theorem,
we show that this divisibility holds for any finite-dimensional pointed Hopf algebra.
\par
}
\begin{theorem*}[$=$ Theorem~\ref{prHLag}]
Let $H$ be a finite-dimensional pointed Hopf algebra.
For every right $H$-simple left $H$-comodule algebra $A$ with $A^{\mathrm{co}H} = \Bbbk$,
$\dim A <\infty$ and $\dim A$ divides $\dim H$.
\end{theorem*}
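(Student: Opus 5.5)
The plan is to deduce divisibility from freeness, by realising $H$ itself (or a suitable object built from it) as a relative Hopf module over $A$. The natural candidate is $H\otimes A$. We give it the diagonal left coaction $h\otimes a\mapsto \sw{h}{1}\sw{a}{-1}\otimes \sw{h}{2}\otimes \sw{a}{0}$ and right $A$-action by multiplication on the second factor. Then $H\otimes A$ lies in $\HM{H}{A}$, and by the freeness theorem (Theorem~\ref{freencharpointed}, direction (i)$\Rightarrow$(iii)) every object of $\HM{H}{A}$ is free over $A$.

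Counting dimensions of $H\otimes A$ alone gives nothing, so the real step is different. We need an object $M\in\HM{H}{A}$ with $\dim M=\dim H$, or at least a finite multiple we can control, such that $M$ is free over $A$. The structure theorem for relative Hopf modules over coideal-type objects suggests the following approach. Use $A^{\mathrm{co}H}=\Bbbk$ together with the right $H$-simplicity to embed $A$ into $H$ (or into a twisted form of $H$) as a left coideal subalgebra; this is presumably what the embedding result for $\mathbb{N}_0$-graded $H$ and $A$ provides. The order of steps would be:
\begin{itemize}
\item Pass to the coradically graded Hopf algebra $\gr H$ and an associated graded comodule algebra $\gr A$. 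Check that right $H$-simplicity and the condition $A^{\mathrm{co}H}=\Bbbk$ survive this passage, and that dimensions are preserved.
\item Apply the embedding theorem to realise $\gr A$ inside a left $\gr H$-comodule algebra of a specific form, essentially a cleft or twisted version $\twg{K}{\sigma}$ of a Hopf algebra $K$ with $\dim K=\dim H$. This ambient object lies in $\HM{\gr H}{\gr A}$.
\item Conclude by freeness that $\dim K=(\dim \gr A)\cdot\operatorname{rank}$, so $\dim A$ divides $\dim H$.
\end{itemize}

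Finite-dimensionality of $A$ should come first. Consider the coaction $A\to H\otimes A$. Since $A^{\mathrm{co}H}=\Bbbk$ and $A$ is right $H$-simple, the map $A\to H$, $a\mapsto (\id\otimes\phi)\rho(a)$, should be injective for a suitable functional $\phi$. Alternatively, we can argue that $A$ embeds into $H\otimes A^{\mathrm{co}H}\cong H$ via the comodule structure. Either way this gives $\dim A\le\dim H$. More concretely, the grouplike part of $A$ is a twisted group algebra of a subgroup of $G(H)$. Because the coinvariants are trivial, the associated graded of $A$ with respect to the coradical filtration injects into $\gr H$, and so $\dim A<\infty$.

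The main obstacle is the middle step. We must produce an $A$-stable, $H$-costable subobject of dimension exactly $\dim H$ (a twisted copy of $H$) inside which $\gr A$ sits, so that freeness applies to it. Handling the $2$-cocycle twist on the grouplike level, which is the cyclic case dealt with by Mombelli, is what I would attempt first. I would then extend it along the coradical filtration using the graded embedding theorem.
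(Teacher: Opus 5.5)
Your outline has the right shape: pass to $\gr H$ and $\gr A$, put $\gr A$ inside a larger $\gr H$-comodule algebra, invoke freeness, and count dimensions. However, the step that carries the proof is left open, and the target you set for it is not reachable in general.

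\textbf{Why your proposed target fails.} Embedding $A$ or $\gr A$ into $H$ as a comodule subalgebra fails as soon as the $2$-cocycle $\psi$ in $\soc{A}\cong{}_\psi\Bbbk F$ is not a coboundary. Composing a comodule algebra map $A\to H$ with $\varepsilon$ gives an algebra map $A\to\Bbbk$, and ${}_\psi\Bbbk F$ admits none. For instance, over $\Bbbk=\mathbb{C}$ with $\psi$ nontrivial, ${}_\psi\Bbbk(\mathbb{Z}_2\times\mathbb{Z}_2)\cong M_2(\mathbb{C})$ is right $\Bbbk(\mathbb{Z}_2\times\mathbb{Z}_2)$-simple with trivial coinvariants. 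It cannot lie in the commutative Hopf algebra $\Bbbk(\mathbb{Z}_2\times\mathbb{Z}_2)$.

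Nor can you expect a ``twisted copy of $H$'' of dimension exactly $\dim H$ containing $\gr A$. Its degree-zero part would be a twisted group algebra of $G(H)$, so $\psi$ would have to extend from $F$ to $G(H)$, and this fails in general. Take $F=\langle a^2,b\rangle\subset G=\langle a\rangle\times\langle b\rangle\cong\mathbb{Z}_4\times\mathbb{Z}_2$. Every alternating bicharacter $\beta$ on $G$ has $\beta(a^2,b)=\beta(a,b)^2=1$. So the nontrivial class on $F$ is not a restriction, and ${}_\psi\Bbbk F$ lies in no twisted group algebra of $G$.

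\textbf{The missing idea.} The ambient only needs dimension \emph{dividing} $\dim H$, and the cocycle should be kept in a tensor factor rather than twisted into $H$.
\begin{enumerate}
\item The paper applies the adjunction $(-)^\pi\dashv\coinv{(\gr H)}\#(-)$ to the projection $p\colon(\gr A)^\pi\to(\gr A)(0)$ (Lemma~\ref{LptoL0}). This gives the comodule algebra map $(\Pi\otimes p)\circ\rho\colon\gr A\to\coinv{(\gr H)}\#(\gr A)(0)$.
\item This map is injective because its kernel is a comodule ideal not containing $1$ in the right $\gr H$-simple algebra $\gr A$ (Lemma~\ref{embedgradesimpcomodalg}).
\item By Proposition~\ref{rightKGsimple}, $(\gr A)(0)\cong{}_\psi\Bbbk F$.
\item Corollary~\ref{subfree} then makes $\coinv{(\gr H)}\#{}_\psi\Bbbk F$ free over $\gr A$. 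So $\dim\gr A$ divides $\dim\coinv{(\gr H)}\cdot|F|$.
\item By Lagrange's theorem for $F\le G(H)$, this divides $\dim\coinv{(\gr H)}\cdot|G(H)|=\dim\gr H=\dim H$.
\end{enumerate}
Finite-dimensionality of $A$ comes for free from the embedding.

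\textbf{Your separate argument for $\dim A<\infty$.} As written it is unjustified. $A$ is not an $H$-Hopf module, so nothing yields $A\hookrightarrow H\otimes A^{\mathrm{co}H}$. The version with a functional $\phi$ can be repaired as follows.
\begin{enumerate}
\item Each $A_g$ is at most one-dimensional, because nonzero elements of $A_g$ are invertible and $A_e=\Bbbk$.
\item Choose $\phi$ nonzero on every nonzero $A_g$.
\item A nonzero kernel of the comodule map $a\mapsto(\id\otimes\phi)\rho(a)$ would contain some $0\neq a\in A_g$ by Lemma~\ref{comodoverpointed}. But such an $a$ is sent to $\phi(a)g\neq0$, so the map is injective into $H$.
\end{enumerate}
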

Note that this theorem is not an immediate corollary of the freeness theorem because $H$ is not necessarily an object of $\HM{H}{A}$.
So we also show that $\HM{H}{A}$ has a certain object $X$ such that $\dim X$ divides $\dim H$ under the above assumptions when $H$ and $A$ are $\mathbb{N}_0$-graded,
and this result applies to the associated graded algebras with respect to a suitable filtration.

\subsection*{Organization of the paper}
The paper is organized as follows.
In Section~\ref{notation}, we review basic definitions and notation concerning pointed Hopf algebras, comodule algebras, relative Hopf modules, left and right $H$-simplicity and Yetter-Drinfeld modules.
In Section~\ref{Cfilteredobject}, we recall the notion of $\mathcal{C}$-filtered objects, which is a main tool to obtain our results.
We apply this notion to the study of freeness, flatness, faithful flatness, projectivity and semisimplicity of relative Hopf modules.
In Section~\ref{freenesssemisimplicity},
we show freeness, classify simple objects of ${\HM{H}{A}}$ and give a criterion for ${\HM{H}{A}}$ to be semisimple when $H$ is pointed and $A$ is right $H$-simple.
We also show that left and right $H$-simplicity coincide in the pointed case.
In addition, we describe how the classical freeness results of Takeuchi and Masuoka follow from our result.
We further observe that a right $H$-simple $H$-comodule algebra is free over any finite-dimensional comodule subalgebra when $H$ is pointed.
In Section~\ref{charapointedbyfreeness}, we show that our freeness theorem characterizes the class of pointed Hopf algebras.
In Section~\ref{rightHsimplecomodalg},
we begin with the well-known structure theorem for right $\Bbbk G$-simple $\Bbbk G$-comodule algebras
and end up showing that a right $H$-simple comodule algebra $A$ with trivial coinvariants over a pointed Hopf algebra $H$ can be embedded into a certain left $H$-comodule algebra, constructed from $H$, when $H$ and $A$ are $\mathbb{N}_0$-graded.
In the final Section~\ref{Lagrange}, we prove a Lagrange-type theorem by combining the freeness theorem with the graded structure theorem.
In Appendix~\ref{classidimH/Gprime}, we give a further divisibility result and in the case where $\dim H/\dim\corad{H}$ is prime, explain that the possible structures of right $H$-simple $H$-comodule algebras are restricted.

\subsection*{Acknowledgments}
The author thanks Akira Masuoka for pointing out a more general perspective on Proposition~\ref{annmor} and Lemma~\ref{hassimpAg}, as discussed in Remark~\ref{embedMgDA}, and for suggesting an alternative proof of Theorem~\ref{mainresult}.
The author thanks Kenichi Shimizu for pointing out that the biproduct construction in Section~\ref{adjforcomalg} can be understood in a more general framework,
which led to a more conceptual  proof of Proposition~\ref{adcomodalg}, and for helpful comments.
The author is grateful to Taiki Shibata for carefully reading an earlier version of this paper and for many helpful suggestions.
Some of the results in this paper were announced at the 57th Symposium on Ring Theory and Representation Theory held on September 8--12, 2025 at Okayama University of Science, Okayama, Japan
(see \cite{N26}).

\section{Notation and conventions}\label{notation}
Throughout this paper, we work over a field $\Bbbk$.
We denote the tensor product over $\Bbbk$ simply by $\otimes$.

For a coalgebra $C$ and an algebra $A$,
we denote by ${}^C\M$, $\M^C$, ${}_A\M$ and $\M_A$ the categories of left $C$-comodules, right $C$-comodules, left $A$-modules and right $A$-modules, respectively. 
In this paper, by a $C$-comodule, we mean a left $C$-comodule.

Let $H$ be a Hopf algebra with comultiplication $\Delta$ and counit $\varepsilon$
and let $M$ be an $H$-comodule with $H$-coaction $\rho_M$.
We denote by $\soc{M}$ the \emph{socle} of $M$, which is the sum of all simple $H$-subcomodules.
In particular, the socle of $H$ coincides with its \emph{coradical} $\corad{H}$, which is the sum of all simple subcoalgebras (see, for example \cite[Theorem~3.4.2]{Radtext}).

We record the following standard fact.
\begin{lemma}\label{socorad}
The following holds:
\[\soc{M} = \rho_M^{-1}(\corad{H}\otimes M).\]
\end{lemma}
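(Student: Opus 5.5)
We prove the two inclusions separately, using the local finiteness of comodules and the fact that for a finite-dimensional subcomodule the coefficient coalgebra controls which simple coalgebras occur. Throughout, for a subspace $N\subseteq M$ we write $\rho_M^{-1}(\corad{H}\otimes M)$ for the preimage.

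\emph{The inclusion $\soc{M}\subseteq\rho_M^{-1}(\corad{H}\otimes M)$.} Since the right-hand side is a subspace, it suffices to show that every simple subcomodule $S\subseteq M$ is contained in it. A simple comodule $S$ is finite-dimensional. Its coefficient coalgebra $C(S)\subseteq H$ is spanned by the coefficients $c_{ij}$, where $\rho(s_j)=\sum_i c_{ij}\otimes s_i$ for a basis $(s_i)$ of $S$. This coalgebra $C(S)$ is a simple subcoalgebra: $S$ is a simple comodule over $C(S)$ and also a faithful one. A finite-dimensional coalgebra with a faithful simple comodule is simple, because its dual algebra has a faithful simple module and is therefore simple artinian. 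Hence $\rho_M(S)\subseteq C(S)\otimes S\subseteq\corad{H}\otimes M$.

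\emph{The reverse inclusion.} Let $m\in M$ with $\rho_M(m)\in\corad{H}\otimes M$. Let $N$ be the finite-dimensional subcomodule generated by $m$. Concretely, write $\rho_M(m)=\sum_i h_i\otimes m_i$ with the $h_i$ linearly independent in $\corad{H}$; then $N=\operatorname{span}\{m_i\}$, and this is a subcomodule by coassociativity. Coassociativity gives
\[\sum_i \Delta(h_i)\otimes m_i=\sum_i h_i\otimes\rho(m_i).\]
Because $\corad{H}=\bigoplus_\alpha D_\alpha$ is a direct sum of simple subcoalgebras and $\Delta(D_\alpha)\subseteq D_\alpha\otimes D_\alpha$, the left-hand side lies in $\corad{H}\otimes\corad{H}\otimes M$. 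Comparing with the right-hand side and using the linear independence of the $h_i$, we obtain $\rho(m_i)\in\corad{H}\otimes M$. Hence $N$ is a comodule over the coalgebra $\corad{H}$, which is cosemisimple as a direct sum of simple coalgebras. Comodules over a cosemisimple coalgebra are semisimple, so $N$ is a sum of simple $\corad{H}$-comodules. These are also simple $H$-subcomodules, since the $H$-subcomodules of $N$ are exactly its $\corad{H}$-subcomodules. Therefore $m\in N\subseteq\soc{M}$.

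The only slightly delicate step is showing that the subcomodule generated by $m$ stays inside the $\corad{H}$-comodules; the coassociativity comparison above handles this. Alternatively, the paper's reference \cite[Theorem~3.4.2]{Radtext} (the socle–coradical correspondence) could be cited to shortcut both directions.
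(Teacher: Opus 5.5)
Your proof is correct and essentially matches the paper's. The forward inclusion rests on the fact that a simple comodule has its coefficients in a simple subcoalgebra: the paper just cites \cite[Theorem~3.2.11(d)]{Radtext}, which you prove via the coefficient coalgebra. The reverse inclusion shows the relevant subcomodule is a $\corad{H}$-comodule and then uses cosemisimplicity of $\corad{H}$.

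The only difference is that the paper treats the whole preimage $N:=\rho_M^{-1}(\corad{H}\otimes M)$ at once. It observes that $N$ is an $H$-subcomodule because $\rho_M$ is a comodule map, and hence $\rho_M(N)\subset(\corad{H}\otimes M)\cap(H\otimes N)=\corad{H}\otimes N$, whereas you argue elementwise through the subcomodule generated by $m$. Your closing remark about \cite[Theorem~3.4.2]{Radtext} is unnecessary, since the paper uses that reference only for the case $M=H$.
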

\begin{proof}
By \cite[Theorem~3.2.11(d)]{Radtext}, which says for a simple $H$-comodule $N$, there exists a simple subcoalgebra $S$ of $H$ such that $\rho_M(N)\subset S\otimes N$,
we have $\soc{M} \subset \rho_M^{-1}(\corad{H}\otimes M)$.

We show the converse. Put $N:=\rho_M^{-1}(\corad{H}\otimes M)$.
Since $\rho_M$ is an $H$-comodule map, $N$ is an $H$-subcomodule of $M$,
and hence $\rho_M(N)\subset (\corad{H}\otimes M) \cap (H\otimes N) = \corad{H}\otimes N$.
Thus, $N$ is a $\corad{H}$-comodule.
Hence, $N$ is the sum of its simple $\corad{H}$-subcomodules (see \cite[Theorem~3.4.10]{Radtext}).
By the definition of $N$,
any $H$-subcomodule of $N$ is a $\corad{H}$-subcomodule, which implies that any simple $\corad{H}$-subcomodule of $N$ is a simple $H$-subcomodule.
Therefore, $N\subset \soc{M}$.
\end{proof}
We denote by $M^{\mathrm{co}H} := \{m\in M \mid \rho_M(m) = 1\otimes m\}$ the \emph{coinvariant space} of $M$.
A Hopf algebra is called \emph{pointed} if any simple subcoalgebra of it is one-dimensional.
In particular, if $H$ is pointed, then $\corad{H}$ is the group algebra of the \emph{group-like elements} $G(H) := \{g\in H \mid \Delta(g)=g\otimes g\text{, }\varepsilon(g)=1\}$ of $H$.

{
\emergencystretch=1em
We record the following elementary lemma about comodules over a pointed Hopf algebra.
\par
}
\begin{lemma}\label{comodoverpointed}
    Suppose $H$ is pointed.
    Let $0\neq M$ be an $H$-comodule.
    Then there exist $g\in G(H)$ and $0\neq m\in M$ such that $\rho_M(m) = g\otimes m$. 
\end{lemma}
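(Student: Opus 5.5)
The plan is to reduce to simple subcomodules and then use the previous lemma (Lemma~\ref{socorad}) together with pointedness. Since $M\neq 0$, the fundamental theorem of comodules gives a nonzero finite-dimensional $H$-subcomodule of $M$. Any nonzero finite-dimensional comodule contains a simple subcomodule: take a nonzero subcomodule of minimal dimension. So $\soc{M}\neq 0$.

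Fix a simple subcomodule $N\subset M$. By the cited fact \cite[Theorem~3.2.11(d)]{Radtext}, as used in the proof of Lemma~\ref{socorad}, there is a simple subcoalgebra $S$ of $H$ with $\rho_M(N)\subset S\otimes N$. Because $H$ is pointed, $S$ is one-dimensional, so $S=\Bbbk g$ for some $g\in G(H)$. Concretely, the nonzero group-likes of a one-dimensional subcoalgebra are exactly the basis elements $g$ with $\Delta(g)=g\otimes g$ and $\varepsilon(g)=1$.

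It follows that $\rho_M(n)=g\otimes n'$ for each $n\in N$, where $n'$ depends linearly on $n$. Applying $\varepsilon\otimes\id$ and using the counit axiom gives $n=\varepsilon(g)n'=n'$. Hence $\rho_M(n)=g\otimes n$ for every $n\in N$, and any $0\neq m\in N$ does the job.

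Alternatively, one can avoid choosing $S$ directly. By Lemma~\ref{socorad} we have $\rho_M(\soc{M})\subset \Bbbk G(H)\otimes M$, so $\soc{M}$ is a $\Bbbk G(H)$-comodule. Comodules over a group algebra are $G(H)$-graded, $\soc{M}=\bigoplus_g \soc{M}_g$ with $\soc{M}_g=\{m\mid \rho_M(m)=g\otimes m\}$, and some $\soc{M}_g$ must be nonzero.

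The only mildly delicate point is justifying that $\soc{M}\neq0$, which the finiteness theorem for comodules handles; everything else is routine.
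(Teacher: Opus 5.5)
Your proof is correct and follows essentially the same route as the paper. You take a simple subcomodule $N$ via the fundamental theorem, use Radford's Theorem~3.2.11(d) and pointedness to get $\rho_M(N)\subset \Bbbk g\otimes N$, and apply the counit axiom to conclude $\rho_M(m)=g\otimes m$. The alternative argument through Lemma~\ref{socorad} and the $G(H)$-grading of $\soc{M}$ is also valid, but it is not needed.
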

\begin{proof}
By the fundamental theorem of comodules, which says a nonzero comodule has a nonzero finite-dimensional subcomodule, $M$ has a simple $H$-subcomodule $N$.
By \cite[Theorem~3.2.11(d)]{Radtext}, there exists a simple subcoalgebra $S$ of $H$ such that $\rho_M(N)\subset S\otimes N$.
Since $H$ is pointed, we can write $S=\Bbbk g$ for some $g\in G(H)$.
Thus $\rho_M(N) \subset \Bbbk g \otimes N$, which implies there exist $0 \neq x\in N$ and $x'\in N$ such that $\rho_M(x)=g\otimes x'$.
Since $1\otimes x =(\varepsilon \otimes \mathrm{id})\circ \rho_M(x) = \varepsilon(g)\otimes x' = 1\otimes x'$, we have $\rho_M(x)=g\otimes x$.
\end{proof}

A \emph{left $H$-comodule algebra} is an algebra which is a left $H$-comodule whose $H$-coaction is an algebra map.
In this paper, by an $H$-comodule algebra, we mean a left $H$-comodule algebra.
An \emph{$H$-comodule algebra map} is an $H$-comodule map which is an algebra map.
We denote by ${}^H\! \mathrm{Alg}$ the category of $H$-comodule algebras.

For an $H$-comodule algebra $A$ with $H$-coaction $\rho_A$,
we denote by ${}^H_A\M$ (resp. $\HM{H}{A}$) the category of \emph{left-left} (resp. \emph{left-right}) \emph{relative $(H,A)$-Hopf modules}.
An object $M\in {}^{H}_{A}\M$ is an object of both ${}^H\M$ and ${}_A\M$ such that
$\rho_M(a.m)=\sw{a}{-1}\sw{m}{-1}\otimes \sw{a}{0}.\sw{m}{0}$ for $a\in A$ and $m\in M$,
where we use the Sweedler notation $\rho_A(a)=\sw{a}{-1}\otimes \sw{a}{0}$ and $\rho_M(m)=\sw{m}{-1}\otimes \sw{m}{0}$.
A morphism in ${}^{H}_{A}\M$ is a left $H$-comodule map which is also a left $A$-module map.
Similarly, an object $M\in \HM{H}{A}$ is an object of both ${}^H\M$ and $\M_A$ such that
$\rho_M(m.a)=\sw{m}{-1}\sw{a}{-1}\otimes \sw{m}{0}.\sw{a}{0}$ for $a\in A$ and $m\in M$.
A morphism in $\HM{H}{A}$ is a left $H$-comodule map which is also a right $A$-module map.

\begin{definition}
Let $A$ be an $H$-comodule algebra.
\begin{itemize}
        \item An \emph{$H$-comodule left ideal} (resp. \emph{$H$-comodule right ideal}) of $A$ is a left ideal (resp. right ideal) which is an $H$-subcomodule of $A$.
        \item $A$ is called \emph{left $H$-simple} (resp. \emph{right $H$-simple}) if it is nonzero and any $H$-comodule left ideal (resp. right ideal) of $A$ is trivial. 
\end{itemize}
\end{definition}
In particular, $A$ is left $H$-simple (resp. right $H$-simple) if and only if $A$ is a simple object in ${}^{H}_{A}\M$ (resp. $\HM{H}{A}$).

\begin{example}
For a group $F$, a map $\psi \colon F\times F \to \Bbbk^\times$ is said to be a \emph{$2$-cocycle} of $F$ if it satisfies $\psi(a,b)\psi(ab,c)=\psi(b,c)\psi(a,bc)$ and $\psi(a,e)=1=\psi(e,a)$ for $a,b,c\in F$.
For a group $G$, a subgroup $F$ of $G$ and a $2$-cocycle $\psi$ of $F$, we can define the $\Bbbk G$-comodule algebra ${}_\psi \Bbbk F$ as follows:
Its underlying vector space is $\Bbbk F$, the multiplication is given by $g * h := \psi(g,h)gh$ for $g,h\in F$, where $gh$ denotes the product in $F$
and the $\Bbbk G$-coaction is given by $\rho(g)=g\otimes g$ for $g\in F$.
It is well known that a $\Bbbk G$-comodule algebra is of the form ${}_\psi\Bbbk F$ if and only if it is right $\Bbbk G$-simple and has trivial coinvariants (see \cite{BZS8}).
We also give another proof of this fact later in Proposition~\ref{rightKGsimple} using our criterion for right $H$-simplicity.
\end{example}
We denote by ${}^H_H\mathcal{YD}$ the category of \emph{left-left Yetter-Drinfeld modules} over $H$.
An object $M\in {}^H_H\YD$ is an object of both ${}_H\M$ and ${}^H\M$ such that
$\sw{h}{1}\sw{m}{-1}\otimes \sw{h}{2}.\sw{m}{0} = \sw{(\sw{h}{1}.m)}{-1}\sw{h}{2}\otimes \sw{(\sw{h}{1}.m)}{0}$ for $h\in H, m\in M$, 
where we use the Sweedler notation $\Delta(h)=\sw{h}{1}\otimes \sw{h}{2}$.
A morphism in ${}^{H}_{H}\YD$ is a left $H$-module map which is also a left $H$-comodule map.

\section{$\mathcal{C}$-filtered objects}\label{Cfilteredobject}

\subsection{$\mathcal{C}$-filtered modules}
Let $A$ be an algebra and let $\mathcal{C}$ be a class of objects in $\M_A$.
We recall the standard terminology of $\mathcal{C}$-filtered modules (see, for example \cite[Definition~6.1]{GTtext}).
We then collect some consequences that will be used in the study of relative Hopf modules.

\begin{definition}\label{def_fobjformodule}
An object $M\in \M_A$ is called \emph{$\mathcal{C}$-filtered}
if there exists an ordinal number $\lambda$ and $A$-submodules $(M_\alpha)_{\alpha\leq \lambda}$ of $M$ such that
\begin{itemize}
    \item $M_0 = 0$ and $M_\lambda =M$.
    \item for each $\alpha\leq \beta \leq \lambda$, $M_\alpha \subset M_\beta$.
    \item for each $\alpha < \lambda$, $M_{\alpha+1}/M_\alpha$ is isomorphic to an object of $\mathcal{C}$.
    \item for each limit ordinal $\alpha\leq \lambda$, $M_\alpha = \bigcup_{\alpha'<\alpha} M_{\alpha'}$.
\end{itemize}
\end{definition}

\begin{proposition}\label{trexflat}
Let $M\in \M_A$ be $\mathcal{C}$-filtered.
If every object of $\mathcal{C}$ is a flat $A$-module, then $M$ is a flat $A$-module.
Moreover, if there exists an ordinal $\beta$ such that $M_{\beta+1}/M_\beta$ is faithfully flat, then $M$ is also faithfully flat.
\end{proposition}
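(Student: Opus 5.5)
We argue by transfinite induction along the filtration $(M_\alpha)_{\alpha\le\lambda}$, proving that every $M_\alpha$ is flat. We use the characterization of flatness of a right $A$-module $N$ by the condition $\mathrm{Tor}_1^A(N,L)=0$ for all left $A$-modules $L$. Equivalently, $N\otimes_A -$ preserves injections, and it suffices to check this on injections of finitely generated left modules $L'\hookrightarrow L$.

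The base case is $M_0=0$, which is flat. For a successor ordinal $\alpha+1$ we use the short exact sequence $0\to M_\alpha\to M_{\alpha+1}\to M_{\alpha+1}/M_\alpha\to 0$. The quotient is isomorphic to an object of $\mathcal{C}$, hence flat. The long exact $\mathrm{Tor}$ sequence contains
\[\mathrm{Tor}_1^A(M_\alpha,L)\to \mathrm{Tor}_1^A(M_{\alpha+1},L)\to \mathrm{Tor}_1^A(M_{\alpha+1}/M_\alpha,L).\]
Both outer terms vanish, so the middle term vanishes and $M_{\alpha+1}$ is flat. For a limit ordinal $\alpha$ we have $M_\alpha=\bigcup_{\alpha'<\alpha}M_{\alpha'}=\varinjlim M_{\alpha'}$, a directed union. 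Since $\mathrm{Tor}$ commutes with filtered colimits, and filtered colimits of flat modules are flat, $M_\alpha$ is flat. Taking $\alpha=\lambda$ gives that $M$ is flat.

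For faithful flatness, recall that a flat module $N$ is faithfully flat if and only if $N\otimes_A L\neq 0$ for every nonzero left module $L$. Let $L\neq 0$. Tensoring the exact sequence
\[0\to M_{\beta+1}\to M\to M/M_{\beta+1}\to 0\]
with $L$ and using $\mathrm{Tor}_1^A(M/M_{\beta+1},L)=0$ shows that $M_{\beta+1}\otimes_A L\to M\otimes_A L$ is injective. Here we need $M/M_{\beta+1}$ to be flat. It is $\mathcal{C}$-filtered by the filtration $M_{\gamma}/M_{\beta+1}$ for $\beta+1\le\gamma\le\lambda$, reindexed by an ordinal; its successive quotients are the same as those of $M$, and unions at limits are preserved. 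So the first part applies.

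Next, $M_{\beta+1}\otimes_A L\to (M_{\beta+1}/M_\beta)\otimes_A L$ is surjective by right exactness. The target is nonzero because $M_{\beta+1}/M_\beta$ is faithfully flat. Hence $M_{\beta+1}\otimes_A L\neq0$, and by the injectivity above $M\otimes_A L\neq 0$.

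The main obstacle is the limit step, together with checking that the quotient $M/M_{\beta+1}$ is again $\mathcal{C}$-filtered, which needs the reindexing of ordinals. Both steps are standard, and we would cite the relevant facts from \cite{GTtext}.
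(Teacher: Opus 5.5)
Your proposal is correct and follows essentially the same route as the paper. For flatness you use the same transfinite induction (the $\mathrm{Tor}$ long exact sequence at successor steps, filtered colimits at limit steps). For faithful flatness you use the same key input: $M/M_{\beta+1}$ is flat because it inherits a $\mathcal{C}$-filtration, and this is combined with right exactness. The difference is cosmetic. You tensor $0\to M_{\beta+1}\to M\to M/M_{\beta+1}\to 0$ and use the surjection $M_{\beta+1}\otimes_A L\to (M_{\beta+1}/M_\beta)\otimes_A L$. The paper instead tensors $0\to M_{\beta+1}/M_\beta\to M/M_\beta\to M/M_{\beta+1}\to 0$ and argues by contraposition from $M\otimes_A N=0$.
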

\begin{proof}
Suppose $M$ is $\mathcal{C}$-filtered by $(M_\alpha)_{\alpha\leq \lambda}$.
To prove the first statement, we show that $M_\alpha$ is flat for all $\alpha\leq\lambda$ by transfinite induction.
If $\alpha = 0$, then the statement follows trivially.
Assume that the statement holds for all ordinals less than $\alpha$.

If $\alpha = \alpha'+1$ is a successor ordinal, then we have the following exact sequence
\[0\longrightarrow M_{\alpha'}\longrightarrow M_{\alpha'+1}\longrightarrow M_{\alpha'+1}/M_{\alpha'}\longrightarrow 0\]
in $\M_A$. Thus, we have an exact sequence
\[\mathrm{Tor}^A_1(M_{\alpha'},X) \longrightarrow \mathrm{Tor}^A_1(M_{\alpha'+1},X) \longrightarrow  \mathrm{Tor}^A_1(M_{\alpha'+1}/M_{\alpha'},X) \]
for $X\in {}_A \M$. By the induction hypothesis and the assumption, $M_{\alpha'}$ and $M_{\alpha'+1}/M_{\alpha'}$ are flat, and hence $\mathrm{Tor}^A_1(M_{\alpha'},X),  \mathrm{Tor}^A_1(M_{\alpha'+1}/M_{\alpha'},X) = 0$.
Thus, $\mathrm{Tor}^A_1(M_{\alpha'+1},X) = 0$ for $X\in {}_A\M$, so $M_\alpha = M_{\alpha'+1}$ is flat.

If $\alpha$ is  a limit ordinal, then $M_\alpha = \bigcup_{\alpha'<\alpha} M_{\alpha'}$ is an inductive limit of $(M_{\alpha'})_{\alpha'<\alpha}$, which are flat by the induction hypothesis.
Therefore, $M_{\alpha}$ is flat since an inductive limit of flat modules is flat.

Next, we show the latter statement.
Assume there exists an ordinal $\beta<\lambda$ such that $M_{\beta+1}/M_\beta$ is faithfully flat.
We show that $M$ is faithfully flat.
Let $N$ be a left $A$-module such that $M\otimes_A N =0$.
Let $Q := M_{\beta+1}/M_\beta$, $E := M/M_\beta$, $T := M/M_{\beta+1}$.
We get the following exact sequence
\[0\longrightarrow Q \longrightarrow E \longrightarrow T \longrightarrow 0.\]
Then
\[\mathrm{Tor}^A_1(T,N) \longrightarrow Q \otimes_A N \longrightarrow E \otimes_A N \longrightarrow T \otimes_A N\longrightarrow 0.\]
We show that $T$ is flat.
Let $T_\gamma := M_\gamma/M_{\beta+1}$ for $\beta+1 \leq \gamma\leq \lambda$.
Then $T$ is $\mathcal{C}$-filtered by $(T_\gamma)_\gamma$.
Indeed, $T_{\beta+1} = M_{\beta+1}/M_{\beta+1} = 0$, $T_{\lambda} = M_\lambda / M_{\beta+1} = M/M_{\beta+1} = T$
and $T_{\gamma+1}/T_{\gamma} = (M_{\gamma+1}/M_{\beta+1})/(M_{\gamma}/M_{\beta+1}) \cong M_{\gamma+1}/M_{\gamma}$.
Therefore, we have that $T$ is flat by the first statement. Hence, $\mathrm{Tor}^A_1(T,N) = 0$,
and thus $Q \otimes_A N \longrightarrow E \otimes_A N$ is injective.
Moreover, by the right exactness, $0=M \otimes_A N \longrightarrow E \otimes_A N$ is surjective, and hence $Q\otimes_A N \longrightarrow E \otimes_A N = 0$.
Since $Q=M_{\beta+1}/M_\beta$ is faithfully flat, we have $N=0$.
\end{proof}

\begin{proposition}\label{trexprojective}
Let $M\in \M_A$ be $\mathcal{C}$-filtered.
If every object of $\mathcal{C}$ is a projective $A$-module, then $M$ is a direct sum of objects of $\mathcal{C}$ as an $A$-module.
In particular, $M$ is projective.
\end{proposition}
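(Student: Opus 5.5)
The plan is to show, by transfinite induction on $\alpha\leq\lambda$, that each short exact sequence $0\to M_\alpha\to M_{\alpha+1}\to M_{\alpha+1}/M_\alpha\to 0$ splits. Choosing the splittings compatibly then yields a decomposition $M\cong\bigoplus_{\alpha<\lambda}M_{\alpha+1}/M_\alpha$. Each summand is isomorphic to an object of $\mathcal{C}$, which gives the first statement. Projectivity then follows at once, since a direct sum of projective modules is projective.

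Concretely, for each $\alpha<\lambda$ the quotient $Q_\alpha:=M_{\alpha+1}/M_\alpha$ is isomorphic to an object of $\mathcal{C}$, hence projective. The surjection $M_{\alpha+1}\to Q_\alpha$ therefore splits. I fix a right $A$-submodule $C_\alpha\subset M_{\alpha+1}$ with $M_{\alpha+1}=M_\alpha\oplus C_\alpha$ and $C_\alpha\cong Q_\alpha$; this uses the axiom of choice once for the whole family. I then claim that for every $\beta\leq\lambda$ the natural map
\[\Phi_\beta\colon \bigoplus_{\alpha<\beta}C_\alpha\longrightarrow M_\beta\]
induced by the inclusions is an isomorphism of $A$-modules.

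I prove the claim by transfinite induction on $\beta$.
\begin{itemize}
\item For $\beta=0$, both sides are $0$.
\item For a successor $\beta=\gamma+1$, the induction hypothesis and $M_{\gamma+1}=M_\gamma\oplus C_\gamma$ give the isomorphism directly.
\item For a limit ordinal $\beta$, surjectivity follows from $M_\beta=\bigcup_{\gamma<\beta}M_\gamma$, since each $M_\gamma$ with $\gamma<\beta$ is hit by $\Phi_\gamma$. For injectivity, any element of the direct sum has finite support, so it lies in $\bigoplus_{\alpha<\gamma}C_\alpha$ for some $\gamma<\beta$. On that subspace $\Phi_\beta$ agrees with $\Phi_\gamma$ followed by the inclusion $M_\gamma\subset M_\beta$, and this composite is injective by hypothesis.
\end{itemize}
Taking $\beta=\lambda$ gives $M\cong\bigoplus_{\alpha<\lambda}C_\alpha$ with $C_\alpha\cong M_{\alpha+1}/M_\alpha$, as required.

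The main point requiring care is the limit step. There one must note that the maps $\Phi_\beta$ are compatible with the inclusions $M_\gamma\subset M_\beta$. This compatibility is why I choose all complements $C_\alpha$ once, at the outset, rather than re-choosing them during the induction. With that choice there is no real obstacle, because the finite-support argument handles injectivity.
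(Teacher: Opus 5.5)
Your proof is correct and follows essentially the same route as the paper: fix complements $M_{\alpha+1}=M_\alpha\oplus C_\alpha$ once at the outset, then prove $M_\beta=\bigoplus_{\alpha<\beta}C_\alpha$ by transfinite induction, using the finite-support argument for injectivity at limit ordinals. One small wording point: the splittings come directly from projectivity of each quotient and need no induction; the transfinite induction is only needed for the direct-sum decomposition, which is exactly what your detailed argument does.
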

\begin{proof}
Suppose $M$ is $\mathcal{C}$-filtered by $(M_\alpha)_{\alpha\leq \lambda}$.
For each $\alpha<\lambda$, we have the following exact sequence
\[0\longrightarrow M_{\alpha}\longrightarrow M_{\alpha+1}\longrightarrow M_{\alpha+1}/M_{\alpha}\longrightarrow 0\]
in $\M_A$. By the assumption, $M_{\alpha+1}/M_{\alpha}$ is projective,
and hence we can choose an $A$-submodule $P_\alpha$ of $M_{\alpha+1}$ such that $M_{\alpha+1} = M_{\alpha}\oplus P_\alpha$ and $P_\alpha \cong M_{\alpha+1}/M_{\alpha}$ as an $A$-module for each $\alpha<\lambda$.

We now show that for each $\alpha\leq \lambda$, $M_\alpha = \bigoplus_{\alpha'<\alpha} P_{\alpha'}$ by transfinite induction.
Assume that the statement holds for all ordinals less than $\alpha$.

If $\alpha = \alpha'+1$ is a successor ordinal,
\[M_{\alpha'+1} = M_{\alpha'} \oplus P_{\alpha'} = (\bigoplus_{\alpha''<\alpha'} P_{\alpha''})\oplus P_{\alpha'} = \bigoplus_{\alpha''\leq\alpha'} P_{\alpha''} = \bigoplus_{\alpha''<\alpha'+1} P_{\alpha''}.\]

If $\alpha$ is a limit ordinal,
$M_\alpha = \bigcup_{\alpha'<\alpha} {M_{\alpha'}} = \bigcup_{\alpha'<\alpha} \bigoplus_{\gamma<\alpha'} P_{\gamma} = \sum_{\alpha'<\alpha} P_{\alpha'}$.
We show this is a direct sum. 
Suppose $x_{\gamma_1} + \cdots + x_{\gamma_n} = 0 \quad (n\in \mathbb{N},  x_{\gamma_i}\in P_{\gamma_i})$ where $\gamma_1,\ldots,\gamma_n<\alpha$ are distinct ordinals.
Put $\gamma := \max\{\gamma_1,\ldots,\gamma_n\} +1$, which is less than $\alpha$ since $\alpha$ is a limit ordinal.
By the induction hypothesis, $\sum_{\gamma'<\gamma} P_{\gamma'}$ is a direct sum, which implies each $x_{\gamma_i}$ is zero.

Therefore, we have $M = M_\lambda = \bigoplus_{\alpha<\lambda} P_{\alpha}$.
\end{proof}

\begin{corollary}\label{trexfree}
Let $M\in \M_A$ be $\mathcal{C}$-filtered.
If every object of $\mathcal{C}$ is a free $A$-module, then $M$ is a free $A$-module.
\end{corollary}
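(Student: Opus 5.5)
This follows from Proposition~\ref{trexprojective}. Every free $A$-module is projective, so the hypothesis of that proposition is satisfied. Applying it to the $\mathcal{C}$-filtration $(M_\alpha)_{\alpha\leq\lambda}$ of $M$ gives a decomposition of right $A$-modules
\[
M=\bigoplus_{\alpha<\lambda}P_\alpha, \qquad P_\alpha\cong M_{\alpha+1}/M_\alpha .
\]
By the definition of a $\mathcal{C}$-filtration, each $M_{\alpha+1}/M_\alpha$ is isomorphic to an object of $\mathcal{C}$. By assumption that object is a free $A$-module, and hence so is each $P_\alpha$.

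Next, for each $\alpha<\lambda$ choose a basis $B_\alpha$ of $P_\alpha$ over $A$. To do this, transport a basis of the corresponding object of $\mathcal{C}$ along a fixed isomorphism; choosing such bases for all $\alpha$ simultaneously uses the axiom of choice, exactly as the choice of complements $P_\alpha$ already did in Proposition~\ref{trexprojective}. A direct sum of free modules is free, with basis the disjoint union of the bases. So $\bigsqcup_{\alpha<\lambda}B_\alpha$ is an $A$-basis of $M$, and equivalently $M\cong\bigoplus_{\alpha<\lambda}A^{(B_\alpha)}\cong A^{(\bigsqcup_\alpha B_\alpha)}$.

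There is no real obstacle here. The one point needing care, the limit-ordinal step showing that the sum $\sum_{\alpha}P_\alpha$ is direct, has already been handled in the proof of Proposition~\ref{trexprojective}. The corollary is therefore just a bookkeeping consequence of that decomposition.
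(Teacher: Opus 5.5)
Your proposal is correct and follows the paper's proof exactly: free modules are projective, so Proposition~\ref{trexprojective} writes $M$ as a direct sum of modules isomorphic to objects of $\mathcal{C}$, hence of free modules, and a direct sum of free modules is free. The extra bookkeeping you give (choosing bases and taking their disjoint union) just spells out that last step, which the paper leaves implicit.
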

\begin{proof}
By Proposition~\ref{trexprojective}, $M$ is a direct sum of free modules.
\end{proof}

\subsection{$\mathcal{C}$-filtered objects in an abelian category}
Let $\mathscr{A}$ be an abelian category and let $\mathcal{C}$ be a class of objects of $\mathscr{A}$.
We recall the terminology of $\mathcal{C}$-filtered objects in an abelian category (see, for example \cite[Definition~1.3]{S13}).
\begin{definition}\label{DefCfiltobj}
An object $M\in \mathscr{A}$ is called \emph{$\mathcal{C}$-filtered}
if there exist an ordinal $\lambda$ and a direct system $(M_\alpha,i_{\alpha,\beta} \colon M_\alpha \longrightarrow M_{\beta})_{\alpha\leq \beta \leq \lambda}$ in $\mathscr{A}$
such that
\begin{itemize}
    \item $M_0 = 0$ and $M_\lambda =M$.
    \item for each $\alpha\leq \beta\leq \lambda$, $i_{\alpha,\beta} \colon M_\alpha \longrightarrow M_{\beta}$ is a monomorphism.
    \item for each $\alpha < \lambda$, $\Coker (i_{\alpha,\alpha+1})$ is isomorphic to an object of $\mathcal{C}$.
    \item for each limit ordinal $\gamma\leq \lambda$, $(M_\gamma, (i_{\alpha, \gamma}\colon M_\alpha \longrightarrow M_\gamma)_{\alpha<\gamma})$ is the colimit
    of the direct system $(M_\alpha,i_{\alpha,\beta})_{\alpha\leq\beta<\gamma}$.
\end{itemize}
\end{definition}

Let $A$ be an algebra.

\begin{lemma}\label{fpf}
Let $M\in\mathscr{A}$ be $\mathcal{C}$-filtered in the sense of Definition~\ref{DefCfiltobj} and let $U \colon \mathscr{A} \to \M_A$ be an exact functor which preserves filtered colimits.
Then $U(M)$ is $U(\mathcal{C})$-filtered in the sense of Definition~\ref{def_fobjformodule}.
\end{lemma}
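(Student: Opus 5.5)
The plan is to transport the filtration along $U$ and then replace the abstract direct system by honest submodules of $U(M)$. Suppose $M$ is $\mathcal{C}$-filtered by $(M_\alpha, i_{\alpha,\beta})_{\alpha\leq\beta\leq\lambda}$. Applying $U$ gives a direct system $(U(M_\alpha), U(i_{\alpha,\beta}))$ in $\M_A$. Since $U$ is exact, it preserves monomorphisms, so each $U(i_{\alpha,\beta})$ is injective. For $\alpha\leq\lambda$ I set
\[N_\alpha := \Image\, U(i_{\alpha,\lambda}) \subset U(M_\lambda) = U(M),\]
which is an $A$-submodule isomorphic to $U(M_\alpha)$. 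These $N_\alpha$ are the candidate filtration in the sense of Definition~\ref{def_fobjformodule}.

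The conditions are checked in order.
\begin{itemize}
\item $N_0 = 0$, since $M_0=0$ and an exact functor preserves the zero object; and $N_\lambda = U(M)$, since $i_{\lambda,\lambda}=\id$.
\item Monotonicity: for $\alpha\leq\beta$ we have $i_{\alpha,\lambda}=i_{\beta,\lambda}\circ i_{\alpha,\beta}$, so $N_\alpha\subset N_\beta$.
\item Successor quotients: exactness of $U$ gives $U(\Coker i_{\alpha,\alpha+1}) \cong \Coker U(i_{\alpha,\alpha+1})$. Via the isomorphisms $U(M_\alpha)\cong N_\alpha$ and $U(M_{\alpha+1})\cong N_{\alpha+1}$, which are compatible with $U(i_{\alpha,\alpha+1})$ and the inclusion, this cokernel is $N_{\alpha+1}/N_\alpha$. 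Hence $N_{\alpha+1}/N_\alpha$ is isomorphic to $U(C)$ for some $C\in\mathcal{C}$.
\end{itemize}

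The main point is the limit condition. Let $\gamma\leq\lambda$ be a limit ordinal. The system indexed by $\alpha<\gamma$ is filtered, and $M_\gamma$ is its colimit with structure maps $i_{\alpha,\gamma}$. Since $U$ preserves filtered colimits, $U(M_\gamma)$ is the colimit in $\M_A$ of $(U(M_\alpha))_{\alpha<\gamma}$ with maps $U(i_{\alpha,\gamma})$. A filtered colimit of modules along injective transition maps is the union of the images. Concretely, the canonical map from the colimit $\varinjlim U(M_\alpha)$ onto $\sum_{\alpha<\gamma}\Image U(i_{\alpha,\gamma})$ is an isomorphism, so $U(M_\gamma)=\bigcup_{\alpha<\gamma}\Image U(i_{\alpha,\gamma})$. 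Applying the injective map $U(i_{\gamma,\lambda})$ then gives $N_\gamma=\bigcup_{\alpha<\gamma}N_\alpha$.

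The one subtlety I expect is that the colimit statement must be used in its strong form: the cocone maps into $U(M_\gamma)$ must jointly be surjective, not merely some abstract colimit must exist. I would make this precise via the explicit construction of filtered colimits of modules as a quotient of $\bigoplus_{\alpha<\gamma} U(M_\alpha)$, in which every element is represented in a single $U(M_\alpha)$. This completes the verification that $U(M)$ is $U(\mathcal{C})$-filtered.
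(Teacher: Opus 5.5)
Your proposal is correct and follows essentially the same route as the paper: you define $N_\alpha := \Image\, U(i_{\alpha,\lambda})$ and check $N_0=0$, $N_\lambda=U(M)$, monotonicity and the successor quotients via exactness, exactly as the paper does. At limit ordinals the paper identifies $\bigcup_{\alpha'<\gamma}N_{\alpha'}$ and $N_\gamma\cong U(M_\gamma)$ as two colimits of the same chain and concludes by the universal property that the inclusion is an isomorphism; you make the same fact explicit through the joint surjectivity of the colimit cocone in $\M_A$.
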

\begin{proof}
Suppose $M$ is $\mathcal{C}$-filtered by $(M_\alpha,i_{\alpha,\beta} \colon M_\alpha \longrightarrow M_{\beta})_{\alpha\leq \beta \leq \lambda}$.
Put $N := U(M)$ and $N_\alpha := \Image ( U(i_{\alpha,\lambda}))$ for $\alpha \leq \lambda$.
For $\alpha\leq \beta\leq \lambda$,
since $(i_{\gamma,\mu})_{\gamma\leq \mu\leq \lambda }$ is a direct system, $i_{\alpha,\lambda} = i_{\beta,\lambda}\circ i_{\alpha,\beta}$.
Hence,
$N_\alpha = \Image (U(i_{\alpha,\lambda})) = \Image (U(i_{\beta,\lambda})\circ U(i_{\alpha,\beta})) \subset \Image (U(i_{\beta,\lambda})) = N_\beta.$
Since an exact functor preserves monomorphisms, we have $N_\alpha \cong U(M_\alpha)$.
Thus, $N_0 \cong U(M_0) = U(0) \cong 0$ since an exact functor preserves the zero object. Hence, $N_0=0$.
We have
$N_\lambda = \Image (U(i_{\lambda,\lambda})) = \Image (U(\mathrm{id}_{M_\lambda})) = \Image (\mathrm{id}_{U(M_\lambda)})= U(M_\lambda) = U(M)$.
Since an exact functor preserves cokernels,
$U(\Coker(i_{\alpha,\alpha+1})) \cong \Coker (U(i_{\alpha,\alpha+1})) \cong N_{\alpha+1}/N_{\alpha}$ and is isomorphic to an object of $U(\mathcal{C})$ for $\alpha< \lambda$.
If $\alpha$ is a limit ordinal, then $N_\alpha\cong U(M_\alpha)$ is the inductive limit of $(N_0\subset \ldots \subset N_{\alpha'})_{\alpha'<\alpha}$ since $U$ preserves the filtered colimit.
On the other hand, $\bigcup_{\alpha'<\alpha} N_{\alpha'}$ is also the inductive limit of $(N_0\subset \ldots \subset N_{\alpha'})_{\alpha'<\alpha}$.
By the universal property, the inclusion $\bigcup_{\alpha'<\alpha} N_{\alpha'} \hookrightarrow N_\alpha$ is an isomorphism,
which implies $N_\alpha = \bigcup_{\alpha'<\alpha} N_{\alpha'}$.
\end{proof}

\begin{proposition}\label{ifU}
Let $M\in\mathscr{A}$ be $\mathcal{C}$-filtered and $U \colon \mathscr{A} \to  \M_A$ be an exact functor which preserves filtered colimits.
Then the following statements hold:
\begin{enumerate}
    \item If every object of $U(\mathcal{C})$ is flat, then $U(M)$ is flat.
    \item If every object of $U(\mathcal{C})$ is faithfully flat and $M\neq 0$, then $U(M)$ is faithfully flat.
    \item If every object of $U(\mathcal{C})$ is projective, then $U(M)$ is projective. More precisely, $U(M)$ is a direct sum of objects of $U(\mathcal{C})$.
    \item If every object of $U(\mathcal{C})$ is free, then $U(M)$ is free.
\end{enumerate}
\end{proposition}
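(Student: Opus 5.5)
The plan is to reduce everything to the module-theoretic statements already proved. Suppose $M$ is $\mathcal{C}$-filtered by $(M_\alpha,i_{\alpha,\beta})_{\alpha\leq\beta\leq\lambda}$. By Lemma~\ref{fpf}, $U(M)$ is $U(\mathcal{C})$-filtered in the sense of Definition~\ref{def_fobjformodule}, via the submodules $N_\alpha=\Image(U(i_{\alpha,\lambda}))$. The successive quotients satisfy $N_{\alpha+1}/N_\alpha\cong U(\Coker(i_{\alpha,\alpha+1}))$, and each of these is isomorphic to an object of $U(\mathcal{C})$.

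Statements (1), (3) and (4) are then immediate. Applying the first part of Proposition~\ref{trexflat} to this filtration of $U(M)$ gives (1). Proposition~\ref{trexprojective} gives (3), including the more precise assertion that $U(M)$ is a direct sum of objects of $U(\mathcal{C})$. Corollary~\ref{trexfree} gives (4).

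For (2), the only extra point is to produce an ordinal $\beta<\lambda$ with $N_{\beta+1}/N_\beta$ faithfully flat, so that the second part of Proposition~\ref{trexflat} applies. Since every object of $U(\mathcal{C})$ is faithfully flat, it suffices to know that there is at least one successor step at all, i.e.\ $\lambda\geq 1$. This is where $M\neq 0$ is used. If $\lambda=0$, then $M=M_0=0$, which contradicts the hypothesis, so $\lambda\geq 1$. Then $\beta=0$ satisfies $\beta<\lambda$, and $N_1/N_0$ is isomorphic to an object of $U(\mathcal{C})$, hence faithfully flat. Because faithful flatness is preserved under isomorphism of modules, Proposition~\ref{trexflat} now yields that $U(M)$ is faithfully flat.

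I do not expect a real obstacle. The one step needing care is (2): one must make sure a nonzero filtration really has a successor step, and must not also require that step to have a nonzero quotient. Nonzero-ness of the quotient is not needed, because faithful flatness is assumed for all objects of $U(\mathcal{C})$. Had it been needed, one would instead pick the least $\beta$ with $M_{\beta+1}\neq M_\beta$.
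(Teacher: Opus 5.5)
Your proposal is correct and takes the paper's route: transfer the filtration to $U(M)$ via Lemma~\ref{fpf}, then invoke Propositions~\ref{trexflat} and~\ref{trexprojective} and Corollary~\ref{trexfree}. Your explicit check that $M\neq 0$ forces $\lambda\geq 1$ is a detail the paper leaves implicit; it lets $\beta=0$ supply the faithfully flat successor quotient needed in (2).
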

\begin{proof}
By Propositions~\ref{trexflat}, \ref{trexprojective}, Corollary~\ref{trexfree} and Lemma~\ref{fpf}.
\end{proof}

\subsection{$\mathcal{C}$-filtered relative Hopf modules}
We now apply the preceding results to relative Hopf modules.
Let $H$ be a Hopf algebra and let $A$ be an $H$-comodule algebra.
Let $\mathcal{C}$ be a class of objects of $\HM{H}{A}$.
\begin{definition}
We say that an object $M\in \HM{H}{A}$ is \emph{$\mathcal{C}$-semiartinian}
if for any relative $(H,A)$-Hopf submodule $N\neq M$ of $M$,
$M/N$ has a nonzero relative $(H,A)$-Hopf submodule which is isomorphic to an object of $\mathcal{C}$.
\end{definition}
Note that every object in $\HM{H}{A}$ is $\mathcal{C}$-semiartinian if and only if every nonzero object in $\HM{H}{A}$ has a nonzero subobject which is isomorphic to an object of $\mathcal{C}$.

\begin{proposition}\label{altofilt}
If $M\in \HM{H}{A}$ is $\mathcal{C}$-semiartinian, then $M$ is $\mathcal{C}$-filtered.
\end{proposition}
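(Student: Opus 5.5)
We construct the filtration by transfinite recursion on subobjects of $M$ in $\HM{H}{A}$, and take the inclusions of subobjects as the direct system of Definition~\ref{DefCfiltobj}. The category $\HM{H}{A}$ is abelian, and kernels, cokernels and filtered colimits are computed on underlying vector spaces. Sums and directed unions of relative Hopf submodules are again relative Hopf submodules, because both the $H$-coaction and the $A$-action are compatible with unions.

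The recursion runs as follows. Set $M_0 := 0$. Suppose $M_\alpha \subsetneq M$ has been defined.
\begin{itemize}
\item By $\mathcal{C}$-semiartinianity applied to $N = M_\alpha$, the quotient $M/M_\alpha$ has a nonzero relative Hopf submodule $L$ isomorphic to an object of $\mathcal{C}$.
\item Choose such an $L$ (using the axiom of choice) and let $M_{\alpha+1}$ be its preimage under $M \to M/M_\alpha$. This is a relative Hopf submodule with $M_\alpha \subsetneq M_{\alpha+1}$ and $M_{\alpha+1}/M_\alpha \cong L$.
\item If $M_\alpha = M$, stop and put $\lambda := \alpha$.
\item At a limit ordinal $\gamma$, set $M_\gamma := \bigcup_{\alpha<\gamma} M_\alpha$. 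This is a relative Hopf submodule, and it is the colimit of the $M_\alpha$ in $\HM{H}{A}$, since colimits there are computed in vector spaces.
\end{itemize}

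The recursion must terminate. The chain $(M_\alpha)$ is strictly increasing at every successor step, so $\alpha \mapsto M_\alpha$ is injective while $M_\alpha \ne M$. Since $M$ is a set, the recursion must reach $M$ at some ordinal $\lambda$. Concretely, $\lambda$ is bounded by the Hartogs ordinal of the power set of $M$; otherwise we would obtain an injection of a too-large ordinal into the set of subsets of $M$.

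It remains to check the conditions of Definition~\ref{DefCfiltobj}:
\begin{itemize}
\item $M_0 = 0$ and $M_\lambda = M$ hold by construction.
\item The maps $i_{\alpha,\beta}$ are inclusions, hence monomorphisms.
\item Each $\Coker(i_{\alpha,\alpha+1}) = M_{\alpha+1}/M_\alpha$ is isomorphic to an object of $\mathcal{C}$.
\item The limit condition holds as noted above.
\end{itemize}
The same data also gives a filtration in the sense of Definition~\ref{def_fobjformodule}.

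The only delicate points are the termination argument and the fact that the union at limit stages is the categorical colimit. Both are routine once one observes that the forgetful functor to vector spaces creates colimits in $\HM{H}{A}$. This holds because $-\otimes H$ commutes with colimits, so the comodule structure on a union of subcomodules is well defined.
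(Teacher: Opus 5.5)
Your proof is correct and follows essentially the same route as the paper: transfinite recursion that adjoins a $\mathcal{C}$-subquotient at successor steps and takes unions at limit ordinals, with termination forced by Hartogs' lemma applied to a set of subsets of $M$. The paper formalizes your ``choose such an $L$'' step by fixing a choice function $s$ on the proper subobjects before the recursion starts. It then runs the recursion up to the Hartogs ordinal with the default value $M$, which is the clean way to make your construction rigorous.
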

\begin{proof}
Let $\mathcal{S} := \{N\mid \text{$N$ is a relative $(H,A)$-Hopf submodule of $M$}\}$, which is a set as $\mathcal{S}$ is a subset of the power set of $M$.
By Hartogs' lemma, there exists an ordinal $\mu$ such that there is no injection from $\mu$ to $\mathcal{S}$.
Since $M$ is $\mathcal{C}$-semiartinian, by the correspondence theorem,
for each $N\in \mathcal{S}\setminus \{M\}$, we can choose $s(N)\in \mathcal{S}$ such that $N\subsetneq s(N)$ and $s(N)/N$ is isomorphic to an object of $\mathcal{C}$.
For each $\alpha\leq \mu$, we define $M_\alpha$ as follows:
\begin{itemize}
    \item $M_0 := 0$.
    \item If $\alpha=\alpha'+1$ is a successor ordinal,
    \[M_{\alpha} := \begin{cases}
    s(M_{\alpha'})&  \text{if }M_{\alpha'} \in \mathcal{S}\setminus \{M\},\\
    M& \text{otherwise}.
    \end{cases}\]
    \item If $\alpha$ is a limit ordinal, $M_\alpha := \bigcup_{\alpha'<\alpha}M_{\alpha'}$.
\end{itemize}
We show that 
\[M_\beta \in \mathcal{S} \text{ and for each } \alpha \leq \beta , M_\alpha\subset M_\beta\]
by transfinite induction on $\beta\leq \mu$.
If $\beta = 0$, then the statement holds since $M_0 = 0 \in \mathcal{S}$.
Assume that the statement holds for all ordinals less than $\beta$.
If $\beta$ is a successor ordinal, then it is clear from the definition of $M_\beta$ and the induction hypothesis that the statement holds.
If $\beta$ is a limit ordinal, then for each $\alpha < \beta$, $M_\alpha \subset \bigcup_{\beta'<\beta} M_{\beta'} = M_\beta$.
Moreover, by the induction hypothesis, $(M_{\beta'})_{\beta'<\beta}$ is an ascending chain of relative $(H,A)$-Hopf submodules of $M$,
and hence its union is again a relative $(H,A)$-Hopf submodule of $M$. Thus, $M_\beta \in \mathcal{S}$.
Thus, by transfinite induction, the statement holds.

We can consider a map $f\colon \mu \to \mathcal{S};\;\alpha \mapsto M_\alpha$.
Assume that for any $\beta < \mu$, $M_\beta \neq M$.
Let $\alpha'<\alpha < \mu$.
Then $\alpha' +1 \leq \alpha$ and $M_{\alpha'+1} \subset M_{\alpha}$.
By the assumption, $M_{\alpha'}\in \mathcal{S}\setminus \{M\}$, and hence $M_{\alpha'+1} = s(M_{\alpha'})\supsetneq M_{\alpha'}$.
Hence, we have $M_{\alpha'} \subsetneq M_{\alpha}$.
This implies $f$ is an injection, which contradicts the choice of $\mu$.
Therefore, $\emptyset \neq \{\mu'< \mu \mid M_{\mu'} = M\}$,
and hence we can take the minimum element $\lambda$ of this set.
Then the relative $(H,A)$-Hopf submodules $(M_\alpha)_{\alpha\leq \lambda}$ of $M$ 
satisfy the following conditions.
\begin{itemize}
    \item $M_0 = 0$ and $M_\lambda =M$.
    \item for each $\alpha\leq \beta \leq \lambda$, $M_\alpha \subset M_\beta$.
    \item for each $\alpha < \lambda$, $M_{\alpha+1}/M_\alpha$ is isomorphic to an object of $\mathcal{C}$.
    \item for each limit ordinal $\alpha\leq \lambda$, $M_\alpha = \bigcup_{\alpha'<\alpha} M_{\alpha'}$.
\end{itemize}
This means $M$ is a $\mathcal{C}$-filtered object in $\HM{H}{A}$.
\end{proof}

\begin{remark}
The notion of a $\mathcal{C}$-semiartinian object can be defined in an abelian category $\mathscr{A}$ other than $\HM{H}{A}$.
One may obtain Proposition~\ref{altofilt} in the same way
for a $\mathcal{C}$-semiartinian object $M\in \mathscr{A}$ such that $\mathrm{Sub}(M)$ is a set
and for any ascending chain $(M_\alpha)_\alpha$ of subobjects of $M$,
there exists a colimit $\operatorname{colim}M_\alpha$ in $\mathscr{A}$ and
the induced morphism $\operatorname{colim}M_\alpha \to M$ is a monomorphism,
where $\mathrm{Sub}(M)$ denotes the class of all subobjects of $M$.
\end{remark}

\begin{proposition}\label{semiarttoM}
Let $M\in \HM{H}{A}$ be $\mathcal{C}$-semiartinian.
Then the following statements hold:
\begin{enumerate}
    \item If every object of $\mathcal{C}$ is flat as an $A$-module, then $M$ is flat as an $A$-module.
    \item If every object of $\mathcal{C}$ is faithfully flat as an $A$-module and $M\neq 0$, then $M$ is faithfully flat as an $A$-module.
    \item If every object of $\mathcal{C}$ is projective as an $A$-module, then $M$ is projective as an $A$-module.
    More precisely, $M$ is a direct sum of objects of $\mathcal{C}$ as an $A$-module.
    \item If every object of $\mathcal{C}$ is free as an $A$-module, then $M$ is free as an $A$-module.
\end{enumerate}
\end{proposition}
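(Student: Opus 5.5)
The plan is to combine Proposition~\ref{altofilt} with Proposition~\ref{ifU}, taking $\mathscr{A} = \HM{H}{A}$ and $U \colon \HM{H}{A} \to \M_A$ the forgetful functor.

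First, by Proposition~\ref{altofilt}, the $\mathcal{C}$-semiartinian object $M$ is $\mathcal{C}$-filtered. The filtration consists of relative Hopf submodules $(M_\alpha)_{\alpha\le\lambda}$ with $M_{\alpha+1}/M_\alpha$ isomorphic to an object of $\mathcal{C}$, and with unions taken at limit ordinals. This is a $\mathcal{C}$-filtration in the sense of Definition~\ref{DefCfiltobj} once we take the inclusions as the maps $i_{\alpha,\beta}$. The only point to check is the limit condition: the union of an ascending chain of relative Hopf submodules is the colimit in $\HM{H}{A}$ of that chain.

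Next, we verify that $U$ is exact and preserves filtered colimits. Kernels, cokernels and filtered colimits in $\HM{H}{A}$ are computed on the underlying vector spaces, since $\otimes$ over $\Bbbk$ is exact and commutes with colimits. The induced coaction and right $A$-action on these vector spaces then make them objects of $\HM{H}{A}$ with the required universal properties. Consequently $U$ commutes with all of these constructions.

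With these facts in hand, Proposition~\ref{ifU} yields (1), (3) and (4) immediately, and (2) whenever $M\neq 0$. Since $U(\mathcal{C})$ is just $\mathcal{C}$ regarded as $A$-modules, the hypotheses match. Alternatively, one could bypass Definition~\ref{DefCfiltobj} and apply Propositions~\ref{trexflat}, \ref{trexprojective} and Corollary~\ref{trexfree} directly to the submodule filtration from Proposition~\ref{altofilt}. For (2) this requires some $M_{\beta+1}/M_\beta$ with $\beta<\lambda$, which exists because $M\ne 0$ forces $\lambda>0$. For (3), the direct sum decomposition comes from Proposition~\ref{trexprojective}.

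The main obstacle, though a mild one, is the colimit and exactness check for $U$, in particular that filtered colimits in $\HM{H}{A}$ are computed underlying.
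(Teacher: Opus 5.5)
Your proposal is correct and follows the paper's own route. Proposition~\ref{altofilt} gives a $\mathcal{C}$-filtration, and Proposition~\ref{ifU}, applied to the exact, colimit-preserving forgetful functor $U\colon \HM{H}{A}\to\M_A$, yields all four statements; your extra checks (the limit condition, the need for $\lambda>0$ in (2), and the direct alternative via Propositions~\ref{trexflat}, \ref{trexprojective} and Corollary~\ref{trexfree}) only make explicit details the paper leaves implicit.
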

\begin{proof}
Taking $U\colon \HM{H}{A} \longrightarrow \M_A $ to be the forgetful functor, which is exact and preserves colimits, Propositions~\ref{ifU} and~\ref{altofilt} imply the statements.
\end{proof}

An object of $\HM{H}{A}$ is said to be \emph{$A$-finite} if it is finitely generated as an $A$-module.

\begin{lemma}\label{containsAfin}
Every nonzero object in $\HM{H}{A}$ has a nonzero $A$-finite subobject.
In particular, every simple object in $\HM{H}{A}$ is $A$-finite.
\end{lemma}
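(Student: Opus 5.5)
Let $0\neq M\in\HM{H}{A}$. The plan is to take a nonzero finite-dimensional $H$-subcomodule $V\subset M$ and show that $V.A$ is a nonzero $A$-finite relative Hopf submodule. Such a $V$ exists by the fundamental theorem of comodules, as already used in the proof of Lemma~\ref{comodoverpointed}.

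First, $V.A$ is a right $A$-submodule. It is finitely generated as an $A$-module, since any basis $v_1,\dots,v_n$ of $V$ generates it. It is nonzero because $V\subset V.A$, using $v.1=v$.

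Second, $V.A$ is an $H$-subcomodule of $M$. For $v\in V$ and $a\in A$, the compatibility condition in $\HM{H}{A}$ gives
\[\rho_M(v.a)=\sw{v}{-1}\sw{a}{-1}\otimes \sw{v}{0}.\sw{a}{0}.\]
Because $V$ is a subcomodule, $\rho_M(v)\in H\otimes V$, so we may write $\sw{v}{-1}\otimes\sw{v}{0}$ with $\sw{v}{0}\in V$. Hence the right-hand side lies in $H\otimes V.A$, and $V.A$ is a subobject of $M$ in $\HM{H}{A}$. This proves the first assertion.

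For the second assertion, let $M$ be simple. The first part gives a nonzero $A$-finite subobject, and by simplicity this subobject must be $M$ itself. So $M$ is $A$-finite.

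No step is a real obstacle. The only point needing care is the comodule-closure computation above, which uses that $V$ is a subcomodule rather than an arbitrary finite-dimensional subspace.
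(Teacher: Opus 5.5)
Your proof is correct and follows the paper's argument exactly: the paper also takes a nonzero finite-dimensional $H$-subcomodule $V\subset M$ by the fundamental theorem of comodules and notes that $VA$ is a nonzero, finitely generated relative Hopf submodule, hence $M=VA$ when $M$ is simple. Your explicit check that $VA$ is an $H$-subcomodule just fills in the step the paper calls clear.
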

\begin{proof}
Let $0 \neq M\in \HM{H}{A}$. By the fundamental theorem of comodules, $M$ contains a nonzero finite-dimensional $H$-subcomodule $V$.
It is clear that $0 \neq VA$ is a relative $(H,A)$-Hopf submodule of $M$ and is a finitely generated $A$-module.
In particular, when $M$ is a simple object in $\HM{H}{A}$, $M=VA$.
\end{proof}

As an application of the results above, 
we observe that it suffices to consider $A$-finite objects to determine properties of objects of $\HM{H}{A}$ as $A$-modules.
The flat and projective cases appear in \cite[Lemma~7.1]{Sk08} and the free case when $A$ is a Hopf subalgebra of $H$ appears in \cite[Lemma~2]{T79}.
\begin{corollary}
    The following statements hold:
\begin{enumerate}
    \item Every object of $\HM{H}{A}$ is flat as an $A$-module if and only if every $A$-finite object of $\HM{H}{A}$ is flat as an $A$-module.
    \item Every nonzero object of $\HM{H}{A}$ is faithfully flat as an $A$-module if and only if every nonzero $A$-finite object of $\HM{H}{A}$ is faithfully flat as an $A$-module.
    \item Every object of $\HM{H}{A}$ is projective as an $A$-module if and only if every $A$-finite object of $\HM{H}{A}$ is projective as an $A$-module.
    \item Every object of $\HM{H}{A}$ is free as an $A$-module if and only if every $A$-finite object of $\HM{H}{A}$ is free as an $A$-module.
\end{enumerate}
\end{corollary}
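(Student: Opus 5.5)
The plan is to derive each of the four equivalences from Proposition~\ref{semiarttoM}, taking $\mathcal{C}$ to be the class of all nonzero $A$-finite objects of $\HM{H}{A}$. In every item the forward implication ("every object has the property $\Rightarrow$ every $A$-finite object has it") is trivial. So only the converse needs an argument.

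For the converse, the key step is to check that every $M\in\HM{H}{A}$ is $\mathcal{C}$-semiartinian. Let $N\subsetneq M$ be a relative $(H,A)$-Hopf submodule. Then $M/N$ is a nonzero object of $\HM{H}{A}$, since quotients by subobjects stay in the category. By Lemma~\ref{containsAfin}, $M/N$ has a nonzero $A$-finite subobject, and this subobject lies in $\mathcal{C}$, which is exactly the semiartinian condition. Now suppose every $A$-finite object has the given property: flat, projective or free (for item (2), faithfully flat for the nonzero ones). Then every object of $\mathcal{C}$ has that property, and parts (1), (3), (4) of Proposition~\ref{semiarttoM} give the property for $M$. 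Part (2) gives faithful flatness for $M\neq 0$.

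I do not see a real obstacle. The only points to watch are two. First, $\mathcal{C}$ must consist of \emph{nonzero} objects, so that the hypothesis in (2) applies to every member of $\mathcal{C}$. Second, although the zero module is $A$-finite, it is harmless in (1), (3) and (4), and it is excluded in (2) exactly as the statement requires. The isomorphism-closure implicit in "isomorphic to an object of $\mathcal{C}$" causes no trouble, because each of the properties is invariant under isomorphism.
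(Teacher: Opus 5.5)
Your proposal is correct and follows essentially the same route as the paper: take $\mathcal{C}$ to be the nonzero $A$-finite objects, use Lemma~\ref{containsAfin} (applied to $M/N$) to see that every object is $\mathcal{C}$-semiartinian, and then invoke Proposition~\ref{semiarttoM}. Your remarks about excluding the zero object in item (2) and about the forward implications being trivial match what the paper leaves implicit.
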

\begin{proof}
Put $\mathcal{C} := \{N\in \HM{H}{A} \mid N\neq 0 \text{ and }N \text{ is $A$-finite}\}$.
By Proposition~\ref{semiarttoM}, it suffices to show that every object of $\HM{H}{A}$ is $\mathcal{C}$-semiartinian.
This follows from Lemma~\ref{containsAfin}.
\end{proof}

We next prepare to study the semisimplicity of the category $\HM{H}{A}$.
\begin{proposition}\label{trexprojectiveHM}
Let $M\in\HM{H}{A}$ be $\mathcal{C}$-filtered.
If every object of $\mathcal{C}$ is a projective object in $\HM{H}{A}$, then $M$ is a direct sum of objects of $\mathcal{C}$ in $\HM{H}{A}$.
\end{proposition}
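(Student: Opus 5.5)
The plan is to rerun the argument of Proposition~\ref{trexprojective} inside the abelian category $\HM{H}{A}$ rather than in $\M_A$. Fix a $\mathcal{C}$-filtration of $M$. Because the forgetful functor $\HM{H}{A}\to\M_A$ is exact and preserves colimits, we may take the filtration to consist of relative $(H,A)$-Hopf submodules $(M_\alpha)_{\alpha\leq\lambda}$ of $M$. These satisfy $M_0=0$ and $M_\lambda=M$, the chain is increasing, each quotient $M_{\alpha+1}/M_\alpha$ is isomorphic in $\HM{H}{A}$ to an object of $\mathcal{C}$, and $M_\alpha=\bigcup_{\alpha'<\alpha}M_{\alpha'}$ at limit ordinals. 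Concretely, the images $\Image(i_{\alpha,\lambda})$ are subobjects of $M$, and the argument of Lemma~\ref{fpf} carried out in $\HM{H}{A}$ shows that they have these properties.

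For each $\alpha<\lambda$ we look at the short exact sequence $0\to M_\alpha\to M_{\alpha+1}\to M_{\alpha+1}/M_\alpha\to 0$ in $\HM{H}{A}$. The third term is isomorphic to an object of $\mathcal{C}$, so it is projective in $\HM{H}{A}$. Hence the sequence splits in $\HM{H}{A}$. We choose a relative Hopf submodule $P_\alpha\subset M_{\alpha+1}$ with $M_{\alpha+1}=M_\alpha\oplus P_\alpha$ as relative Hopf modules; for instance, take $P_\alpha$ to be the image of a section, which is a morphism in $\HM{H}{A}$. Then $P_\alpha\cong M_{\alpha+1}/M_\alpha$ in $\HM{H}{A}$, so $P_\alpha$ is isomorphic to an object of $\mathcal{C}$.

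Next we show by transfinite induction that $M_\alpha=\bigoplus_{\alpha'<\alpha}P_{\alpha'}$ as an internal direct sum of subspaces. The successor step and the limit step are literally the same as in Proposition~\ref{trexprojective}. At a limit ordinal, the sum of the $P_{\alpha'}$ equals the union, and directness is checked on finitely many summands, all of which lie in some earlier $M_\gamma$. Taking $\alpha=\lambda$ gives $M=\bigoplus_{\alpha<\lambda}P_\alpha$ as vector spaces.

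It remains to see that this internal direct sum is a direct sum, that is, a coproduct, in $\HM{H}{A}$. Each $P_\alpha$ is both an $H$-subcomodule and an $A$-submodule. So the canonical map $\bigoplus_\alpha P_\alpha\to M$ from the external coproduct is a morphism in $\HM{H}{A}$; here coproducts in $\HM{H}{A}$ are computed on underlying vector spaces with componentwise coaction and action. This map is bijective, and a bijective morphism in $\HM{H}{A}$ is an isomorphism. Therefore $M$ is a direct sum of objects isomorphic to objects of $\mathcal{C}$ in $\HM{H}{A}$. The one real point is that the splitting, and hence the complements $P_\alpha$, must be chosen in $\HM{H}{A}$ and not merely in $\M_A$; this is exactly what projectivity in $\HM{H}{A}$ provides.
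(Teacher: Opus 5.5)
Your proposal is correct and is the paper's own proof: the paper simply states that the argument of Proposition~\ref{trexprojective} carries over verbatim to $\HM{H}{A}$. You additionally make explicit the points the paper leaves implicit, namely realizing the filtration by relative Hopf submodules, taking the complements $P_\alpha$ as images of sections in $\HM{H}{A}$, and checking that the internal direct sum is a coproduct in $\HM{H}{A}$.
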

\begin{proof}
The proof is the same as that of Proposition~\ref{trexprojective}.
\end{proof}

\begin{lemma}\label{Afinhasamaximal}
Every nonzero $A$-finite object in $\HM{H}{A}$ has a maximal proper subobject in $\HM{H}{A}$.
\end{lemma}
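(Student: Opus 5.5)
The plan is to apply Zorn's lemma to the set of proper subobjects of $M$, using that $M$ is finitely generated as an $A$-module to show that ascending chains of proper subobjects have proper upper bounds.

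Let $0 \neq M \in \HM{H}{A}$ be $A$-finite, and choose generators $m_1,\ldots,m_n$ of $M$ as a right $A$-module. Let $\mathcal{S}'$ be the set of relative $(H,A)$-Hopf submodules $N \subsetneq M$, ordered by inclusion. This set is nonempty, since $0 \in \mathcal{S}'$ because $M \neq 0$.

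Now take a nonempty chain $(N_i)_{i}$ in $\mathcal{S}'$ and consider its union $N := \bigcup_i N_i$.
\begin{itemize}
\item $N$ is a relative $(H,A)$-Hopf submodule of $M$. It is a right $A$-submodule because the $N_i$ form a chain. It is an $H$-subcomodule because $\rho_M(N_i) \subset H \otimes N_i \subset H \otimes N$ for each $i$. This is the same reasoning as in the proof of Proposition~\ref{altofilt}.
\item $N$ is proper. Suppose instead that $N = M$. Each $m_j$ then lies in some $N_{i_j}$. Since the family is a chain, there is a largest $N_{i_0}$ among the finitely many $N_{i_1},\ldots,N_{i_n}$, and it contains all of $m_1,\ldots,m_n$. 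Because $N_{i_0}$ is a right $A$-submodule, it follows that $N_{i_0} \supset m_1A + \cdots + m_nA = M$. This contradicts $N_{i_0} \in \mathcal{S}'$.
\end{itemize}
So every chain in $\mathcal{S}'$ has an upper bound in $\mathcal{S}'$. Zorn's lemma then gives a maximal element of $\mathcal{S}'$, which is a maximal proper subobject of $M$ in $\HM{H}{A}$.

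I do not expect a genuine obstacle. The only point needing care is the properness of the union, and that is exactly where the finite generation of $M$ over $A$ is used.
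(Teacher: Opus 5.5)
Your proposal is correct and follows essentially the same route as the paper: apply Zorn's lemma to the nonempty set of proper subobjects, observe that the union of a chain is again a relative $(H,A)$-Hopf submodule, and use finite generation to show the union is proper. The properness step is handled the same way, by placing the finitely many generators in a single member of the chain.
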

\begin{proof}
We prove this by Zorn's lemma.
Let $M\in\HM{H}{A}$ be a nonzero $A$-finite object.
Denote by $\Omega$ the set of all proper subobjects of $M$. Since $M$ is nonzero, $0$ is a proper subobject of $M$. Thus $\Omega\neq \emptyset$.
Let $\mathfrak{X}\neq \emptyset$ be a chain in $\Omega$. 
Then $S := \bigcup \mathfrak{X}$ is a subobject of $M$ since a union of an ascending chain of relative $(H,A)$-Hopf submodules is again a relative $(H,A)$-Hopf submodule.
We show $S$ is properly contained in $M$. Assume $M=S$.
Since $M$ is $A$-finite, we can write $M=\sum^n_{i=1} x_iA$ for some $n\in\mathbb{N}, x_i \in M$. 
Taking $X_i \in \mathfrak{X}$ such that $x_i\in X_i$ for $i=1,\ldots,n$, we have $M=\sum^n_{i=1}X_i$.
We can take some $X_j$ containing all $X_i$  since every finite chain has a maximum element.
Thus, $M=\sum^n_{i=1}X_i=X_j$ which contradicts that $X_j$ is a proper subobject of $M$.
Therefore, $S$ is a proper subobject of $M$, and hence $S$ is an upper bound of $\mathfrak{X}$.
By Zorn's lemma, $\Omega$ has a maximal element.
\end{proof}

For an abelian category $\mathscr{A}$, we call $\mathscr{A}$ a \emph{semisimple category} if every object in $\mathscr{A}$ is isomorphic to a direct sum of simple objects in $\mathscr{A}$.
\begin{proposition}\label{semisimpleHM}
The following are equivalent:
\begin{itemize}
    \item [\textup{(i)}] $\HM{H}{A}$ is a semisimple category.
    \item [\textup{(ii)}] Every $A$-finite object in $\HM{H}{A}$ is a projective object in $\HM{H}{A}$.
    \item [\textup{(iii)}] Every simple object in $\HM{H}{A}$ is a projective object in $\HM{H}{A}$.
\end{itemize}
\end{proposition}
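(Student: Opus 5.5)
I will prove the cycle (i)$\Rightarrow$(iii)$\Rightarrow$(ii)$\Rightarrow$(i). The tools are Lemma~\ref{containsAfin}, Lemma~\ref{Afinhasamaximal}, Proposition~\ref{altofilt} and Proposition~\ref{trexprojectiveHM}.

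For (i)$\Rightarrow$(iii), assume $\HM{H}{A}$ is semisimple, let $S$ be simple, and let $f\colon M\to N$ be an epimorphism with $g\colon S\to N$. Since $M$ is a direct sum of simple objects, $\Ker f$ has a complement. Concretely, the standard argument in a category where every object is a direct sum of simples shows that every subobject is a direct summand: apply Zorn to families of simple summands of $M$ that intersect $\Ker f$ trivially. Then $f$ splits, and $g$ lifts. I expect this step to be routine once I note that unions of chains of subobjects are subobjects.

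For (iii)$\Rightarrow$(ii), let $M$ be an $A$-finite object.
\begin{itemize}
\item Let $\mathcal{C}$ be the class of simple objects. I first show that every object is $\mathcal{C}$-semiartinian, i.e.\ every nonzero object $X$ contains a simple subobject.
\item By Lemma~\ref{containsAfin}, $X$ has a nonzero $A$-finite subobject $Y$.
\item By Lemma~\ref{Afinhasamaximal}, $Y$ has a maximal proper subobject $Y'$, so $Y/Y'$ is simple.
\item By (iii), $Y/Y'$ is projective, so $Y\to Y/Y'$ splits and $Y/Y'$ embeds into $Y\subset X$.
\item Proposition~\ref{altofilt} then makes $M$ $\mathcal{C}$-filtered.
\item Proposition~\ref{trexprojectiveHM} makes $M$ a direct sum of simples, each projective by (iii).
\item Hence $M$ is projective, since direct sums of projectives are projective.
\end{itemize}

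For (ii)$\Rightarrow$(i), take $\mathcal{C}$ to be the simple objects again. Every simple object is $A$-finite by Lemma~\ref{containsAfin}, so it is projective by (ii). The same argument as above then shows that every object has a simple subobject: the quotient $Y/Y'$ of the $A$-finite $Y$ is simple, hence projective, hence a subobject of $Y$. So every object is $\mathcal{C}$-semiartinian. By Propositions~\ref{altofilt} and \ref{trexprojectiveHM}, every object is a direct sum of simple objects, which gives (i).

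The main obstacle is the (i)$\Rightarrow$(iii) direction, which needs the fact that "every object is a direct sum of simples" implies "every short exact sequence splits". I would handle it by the Zorn argument: choose a maximal family of simple summands $S_j$ of $M=\bigoplus S_i$ with $\Ker f\cap\bigoplus S_j=0$. Maximality forces each $S_i$ to lie in $\Ker f\oplus\bigoplus S_j$, since $S_i$ is simple and so meets $\Ker f\oplus\bigoplus S_j$ in either $0$ or all of $S_i$. This gives the required complement of $\Ker f$.
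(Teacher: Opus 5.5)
Your proposal is correct and follows the paper's argument: the key step is that every nonzero object contains a simple subobject, obtained via Lemma~\ref{containsAfin}, Lemma~\ref{Afinhasamaximal} and splitting off the projective simple quotient, and then Propositions~\ref{altofilt} and~\ref{trexprojectiveHM} give the decomposition; this is exactly the paper's proof of (iii)$\Rightarrow$(i), which you have merely arranged in the cycle (i)$\Rightarrow$(iii)$\Rightarrow$(ii)$\Rightarrow$(i). Your Zorn argument, showing that every subobject of a direct sum of simples is a direct summand, makes explicit the step the paper treats as immediate in (i)$\Rightarrow$(ii).
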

\begin{proof}
The implication (i)$\Rightarrow$(ii) is immediate. The implication (ii)$\Rightarrow $(iii) follows from Lemma~\ref{containsAfin}.
We show the implication (iii)$\Rightarrow$(i).
Put $\mathcal{C} := \{S\mid \text{$S$ is a simple object in $\HM{H}{A}$}\}$.
Let $0\neq M\in\HM{H}{A}$. $M$ has a nonzero $A$-finite subobject $N$ by Lemma~\ref{containsAfin}.
By Lemma~\ref{Afinhasamaximal}, we can obtain a maximal proper subobject $P$ of $N$.
Thus, $N/P$ is a simple object in $\HM{H}{A}$ by the correspondence theorem.
Since $N/P$ is a projective object in $\HM{H}{A}$ by the assumption, the following exact sequence splits in $\HM{H}{A}$.
\[0\longrightarrow P \longrightarrow N  \longrightarrow N/P\longrightarrow 0.\]
Hence, $N\cong P\oplus N/P$ in $\HM{H}{A}$, which implies $N/P \hookrightarrow N \subset M$.
We have proved that every nonzero object in $\HM{H}{A}$ contains a simple object in $\HM{H}{A}$.
Thus, every object in $\HM{H}{A}$ is $\mathcal{C}$-semiartinian.
Therefore, every object in $\HM{H}{A}$ is a direct sum of simple objects by the assumption, Propositions~\ref{altofilt} and~\ref{trexprojectiveHM}.
\end{proof}
\begin{remark}
The condition in (ii) was studied in \cite{CG4}, and the implication (ii)$\Rightarrow$(i) for ${}_A \mathfrak{M}^H$ was proved there under the assumption that the algebra $A$ is left Noetherian.
\end{remark}
\begin{remark}
The preceding arguments from Definition~\ref{DefCfiltobj} to Proposition~\ref{semisimpleHM} carry over to the Yetter-Drinfeld category ${}^H_H\YD$, and one can also obtain the analogue of Proposition~\ref{semisimpleHM} for ${}^H_H\YD$.
\end{remark}

\section{Relative Hopf modules over right $H$-simple $H$-comodule algebras over a pointed Hopf algebra}\label{freenesssemisimplicity}

Let $H$ be a Hopf algebra and let $A$ be an $H$-comodule algebra.

\subsection{Freeness of relative Hopf modules over a pointed Hopf algebra}
For $g\in G(H)$ and $M\in {}^H\M$,
we denote by $\twg{M}{g}$ the object of ${}^H\M$ whose $H$-coaction is given by
\[{}_g\rho(m) := g\sw{m}{-1}\otimes \sw{m}{0}.\]
Similarly, we denote by $M^{[g]}$ the object of ${}^H\M$ whose $H$-coaction is given by
\[\rho_g(m) := \sw{m}{-1}g\otimes \sw{m}{0}.\]
In particular, if $M\in\HM{H}{A}$ (resp. $M\in {}^H_A\M$),
then $\twg{M}{g} \in \HM{H}{A}$ (resp. $M^{[g]}\in {}^H_A\M$),
whose underlying $A$-module is $M$.

\begin{proposition}\label{subAAgsame}
Let $g\in G(H)$ and let $I$ be a subspace of $A$.
The following are equivalent:
\begin{itemize}
    \item [\textup{(i)}]  $I$ is an $H$-comodule right ideal of $A$.
    \item [\textup{(ii)}]  $I$ is a relative $(H,A)$-Hopf submodule of $\twg{A}{g}\in\HM{H}{A}$.
\end{itemize}
In particular, $A$ is right $H$-simple if and only if $\twg{A}{g}$ is a simple object in $\HM{H}{A}$.
\end{proposition}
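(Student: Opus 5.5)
The argument rests on one observation: twisting the coaction by a group-like element is an invertible operation on comodules, so it does not change which subspaces are subcomodules. Since $\twg{A}{g}$ has the same underlying right $A$-module as $A$, right ideals of $A$ are exactly the right $A$-submodules of $\twg{A}{g}$. So the only thing to prove is that, for a subspace $I\subset A$, being an $H$-subcomodule of $A$ is equivalent to being an $H$-subcomodule of $\twg{A}{g}$.

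For (i)$\Rightarrow$(ii), suppose $\rho_A(I)\subset H\otimes I$. Then
\[
{}_g\rho(I)=(L_g\otimes\id)\rho_A(I)\subset gH\otimes I\subset H\otimes I,
\]
where $L_g$ denotes left multiplication by $g$. Hence $I$ is a subcomodule of $\twg{A}{g}$.

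For (ii)$\Rightarrow$(i), suppose ${}_g\rho(I)\subset H\otimes I$. Since $g$ is invertible with inverse $g^{-1}=S(g)$, we have
\[
\rho_A=(L_{g^{-1}}\otimes\id)\circ{}_g\rho,
\]
and therefore $\rho_A(I)\subset g^{-1}H\otimes I\subset H\otimes I$. Together with the fact that $I$ is a right ideal, this shows $I$ is an $H$-comodule right ideal of $A$. In both directions, the compatibility conditions for a relative Hopf module are inherited from the ambient object, so no further checking is needed.

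For the final assertion, the equivalence (i)$\Leftrightarrow$(ii) identifies the lattice of $H$-comodule right ideals of $A$ with the lattice of subobjects of $\twg{A}{g}$ in $\HM{H}{A}$. In particular $A\neq 0$ if and only if $\twg{A}{g}\neq 0$, and all $H$-comodule right ideals are trivial if and only if all subobjects are trivial. So $A$ is right $H$-simple exactly when $\twg{A}{g}$ is simple. There is no real obstacle here; the only point to keep track of is that $g$ is invertible in $H$.
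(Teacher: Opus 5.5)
Your proposal is correct and follows essentially the same route as the paper. The paper gets both implications at once from the fact that the linear automorphism $x\otimes y\mapsto gx\otimes y$ of $H\otimes A$ maps $H\otimes I$ onto itself. You instead split the argument into two directions, using left multiplication by $g$ and by $g^{-1}$, and you spell out the ``in particular'' step that the paper leaves implicit.
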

\begin{proof}
Since $A$ and $\twg{A}{g}$ have the same underlying $A$-module,
a right ideal of $A$ is just an $A$-submodule of $\twg{A}{g}$.
Moreover, since the map $\varphi \colon H\otimes A \to H\otimes A;\; x\otimes y \mapsto gx\otimes y$ is a linear isomorphism and $\varphi(H\otimes I) = H\otimes I$,
we have $\rho(I)\subset H\otimes I \iff \varphi(\rho(I))\subset \varphi(H\otimes I) \iff {}_g\rho(I) \subset H\otimes I$.
\end{proof}

We define
\[M_g := \{m\in M \mid \rho(m)=g\otimes m\}\]
for $g\in G(H)$ and $M\in {}^H\M$.

\begin{proposition}\label{annmor}
Let $M\in \HM{H}{A}$, let $g\in G(H)$ and let $m\in M_g$. The map $f \colon \twg{A}{g} \to M ;\; a \mapsto ma$ is a morphism in $\HM{H}{A}$.
\end{proposition}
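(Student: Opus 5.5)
We have to check two things: that $f$ is right $A$-linear, and that $f$ is a left $H$-comodule map from $\twg{A}{g}$ (with coaction ${}_g\rho(a)=g\sw{a}{-1}\otimes\sw{a}{0}$) to $M$.

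Right $A$-linearity is immediate from associativity of the right action:
\[f(ab)=m(ab)=(ma)b=f(a)b\]
for $a,b\in A$.

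For colinearity, fix $a\in A$. By the compatibility condition defining $\HM{H}{A}$ we have $\rho_M(ma)=\sw{m}{-1}\sw{a}{-1}\otimes\sw{m}{0}\sw{a}{0}$. Since $m\in M_g$, $\rho_M(m)=g\otimes m$, and substituting gives
\[\rho_M(f(a))=g\sw{a}{-1}\otimes m\sw{a}{0}=(\mathrm{id}\otimes f)\bigl({}_g\rho(a)\bigr).\]
This is exactly the statement that $f$ is an $H$-comodule map $\twg{A}{g}\to M$. Hence $f$ is a morphism in $\HM{H}{A}$.

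There is no real obstacle. The only point needing care is that the twist must be placed on the left ($\twg{A}{g}$ rather than $A^{[g]}$), so that the factor $g$ coming from $m$ appears to the left of $\sw{a}{-1}$, matching the order in the relative Hopf module axiom.
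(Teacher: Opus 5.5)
Your proof is correct and matches the paper's argument: right $A$-linearity is immediate, and colinearity follows by substituting $\rho_M(m)=g\otimes m$ into the relative Hopf module compatibility condition, giving $\rho_M(ma)=g\sw{a}{-1}\otimes m\sw{a}{0}=(\mathrm{id}\otimes f)({}_g\rho(a))$.
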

\begin{proof}
The map $f$ is clearly an $A$-module map. It remains to show that $f$ is an $H$-comodule map.
For any $a\in A$, we have $\rho\circ f(a) = \rho(ma) 
= g\sw{a}{-1}\otimes m\sw{a}{0} = (\mathrm{id}\otimes f)\circ {}_g\rho(a)$.
\end{proof}

\begin{lemma}\label{hassimpAg}
    If $H$ is pointed and $A$ is right $H$-simple, then the following hold:
    \begin{itemize}
        \item For every simple object $M\in\HM{H}{A}$, there exists $g\in G(H)$ such that $M\cong \twg{A}{g}$ in $\HM{H}{A}$.
        \item Every nonzero object $M\in\HM{H}{A}$ has a simple subobject in $\HM{H}{A}$.
    \end{itemize}
\end{lemma}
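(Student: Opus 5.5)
Both statements rest on one construction. Let $0\neq M\in\HM{H}{A}$. Since $H$ is pointed, Lemma~\ref{comodoverpointed} gives $g\in G(H)$ and $0\neq m\in M$ with $\rho_M(m)=g\otimes m$, so $m\in M_g$. By Proposition~\ref{annmor}, the map $f\colon \twg{A}{g}\to M$, $a\mapsto ma$, is a morphism in $\HM{H}{A}$. It is nonzero because $f(1)=m\neq 0$.

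Since $A$ is right $H$-simple, Proposition~\ref{subAAgsame} says that $\twg{A}{g}$ is a simple object of $\HM{H}{A}$. Its kernel $\Ker f$ is a subobject, and it is proper because $f\neq 0$. Hence $\Ker f=0$ and $f$ is a monomorphism. Its image $mA$ is then a subobject of $M$ isomorphic to $\twg{A}{g}$, and in particular it is simple. This proves the second statement.

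For the first statement, let $M$ be simple. Applying the same construction, the nonzero subobject $mA$ must equal $M$ by simplicity. So $f$ is also surjective, and $M\cong\twg{A}{g}$ in $\HM{H}{A}$.

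There is no serious obstacle here. The only points to check are that $\Ker f$ and $\Image f$ are genuinely subobjects in $\HM{H}{A}$, which holds because $\HM{H}{A}$ is abelian with kernels and images computed as for the underlying vector spaces, and that $f(1)=m$ uses the unit of $A$ acting trivially on $M$.
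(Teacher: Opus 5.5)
Your proof is correct and follows the paper's argument: the paper also takes $0\neq m\in M_g$ from Lemma~\ref{comodoverpointed}, uses Proposition~\ref{annmor} to get the morphism $a\mapsto ma$ from $\twg{A}{g}$, and concludes injectivity from the simplicity of $\twg{A}{g}$ (Proposition~\ref{subAAgsame}), with surjectivity when $M$ is simple. Your explicit remark that $f(1)=m\neq 0$ fills in the step the paper leaves implicit, namely that the kernel is a proper subobject.
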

\begin{proof}
Let $0\neq M\in\HM{H}{A}$.
By Lemma~\ref{comodoverpointed}, there exist $g\in G(H)$ and $0 \neq m\in M$ such that $\rho(m) = g\otimes m$.
The morphism $\twg{A}{g}\to M$ given by Proposition~\ref{annmor} is injective.
Indeed, the kernel of the morphism is zero since it is a subobject of $\twg{A}{g}$,
which is simple by Proposition~\ref{subAAgsame}.
Moreover, if $M$ is simple, then the morphism is also surjective. 
\end{proof}

\begin{remark}\label{embedMgDA}
Proposition~\ref{annmor} and Lemma~\ref{hassimpAg} admit stronger formulations.
Put $D := A^{\mathrm{co}H} \cong \HM{H}{A}(A,A)$. Then for $g\in G(H)$ and $M\in \HM{H}{A}$,
\[M_g \otimes_{D} A \longrightarrow M;\; m \otimes_{D} a \longmapsto ma\]
is a morphism in $\HM{H}{A}$, where the coaction of $M_g \otimes_{D} A$ is given by $\rho(m\otimes_D a)= g\sw{a}{-1} \otimes m \otimes_D \sw{a}{0}$.
Moreover, if $A$ is right $H$-simple, then the above map is injective without assuming pointedness.
This follows because the above map can be identified with 
\[\HM{H}{A}(A,N)\otimes_{\HM{H}{A}(A,A)} A \longrightarrow N;\; f \otimes_{\HM{H}{A}(A,A)} a \longmapsto f(a),\] where $N:= \twg{M}{g^{-1}}$,
and this map is injective by the Simplicity criterion (see \cite[Proposition~12.5]{AMT09}).
\end{remark}

\begin{proposition}\label{pointedfilter}
Suppose  $H$ is pointed and $A$ is right $H$-simple.
Put $\mathcal{C} :=  \{ \twg{A}{g} \mid g\in G(H)\}$.
Then every $M\in \HM{H}{A}$ is $\mathcal{C}$-filtered.
\end{proposition}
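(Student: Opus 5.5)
The plan is to reduce the statement to Proposition~\ref{altofilt}: it is enough to show that every $M\in \HM{H}{A}$ is $\mathcal{C}$-semiartinian. Proposition~\ref{altofilt} then produces the required ordinal-indexed chain of relative $(H,A)$-Hopf submodules $(M_\alpha)_{\alpha\le\lambda}$. The successive quotients of this chain are isomorphic to objects of $\mathcal{C}$, and unions are taken at limit ordinals. This is exactly $\mathcal{C}$-filteredness in the sense of Definition~\ref{DefCfiltobj} inside the abelian category $\HM{H}{A}$, with the inclusions as transition maps, because colimits of ascending chains of subobjects in $\HM{H}{A}$ are computed as unions.

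To check $\mathcal{C}$-semiartinianity, I take a proper relative $(H,A)$-Hopf submodule $N\subsetneq M$. The quotient $M/N$ is a nonzero object of $\HM{H}{A}$, since the coaction and the $A$-action both descend to it. By the second item of Lemma~\ref{hassimpAg}, $M/N$ contains a simple subobject $S$. By the first item of the same lemma, $S\cong \twg{A}{g}$ for some $g\in G(H)$. Hence $M/N$ has a nonzero subobject isomorphic to an object of $\mathcal{C}$, which is what we need. More directly, one could apply Lemma~\ref{comodoverpointed} to $M/N$ to obtain $0\ne \bar m\in (M/N)_g$. Proposition~\ref{annmor} then gives a morphism $\twg{A}{g}\to M/N$, which is nonzero and hence injective by the simplicity of $\twg{A}{g}$ (Proposition~\ref{subAAgsame}).

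I do not expect a serious obstacle, since all the ingredients are already in place. The only point needing care is a formality: the notion of $\mathcal{C}$-filtered used in Proposition~\ref{altofilt} is submodule-based, while $\mathcal{C}$ here is a class of objects. The condition "isomorphic to an object of $\mathcal{C}$" in $\HM{H}{A}$ matches the definition, so that check is straightforward.
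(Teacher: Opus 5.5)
Your proposal is correct and matches the paper's proof. The paper likewise gets $\mathcal{C}$-semiartinianity of every object of $\HM{H}{A}$ from Lemma~\ref{hassimpAg}, applied to the nonzero quotients $M/N$, and then invokes Proposition~\ref{altofilt}; your alternative via Lemma~\ref{comodoverpointed} and Proposition~\ref{annmor} just unfolds the proof of Lemma~\ref{hassimpAg}.
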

\begin{proof}
Every object of $\HM{H}{A}$ is $\mathcal{C}$-semiartinian by Lemma~\ref{hassimpAg}.
So the statement follows from Proposition~\ref{altofilt}.
\end{proof}

In particular, the forgetful functor $U\colon \HM{H}{A} \longrightarrow \M_A$ sends a $\{ \twg{A}{g} \mid g\in G(H)\}$-filtered object
to a $\{A\}$-filtered module (see Lemma~\ref{fpf}).
Therefore, the following freeness holds by Corollary~\ref{trexfree}.

\begin{theorem}\label{mainresult}
    If $H$ is pointed and $A$ is right $H$-simple, then every $M\in \HM{H}{A}$ is free as an $A$-module.
\end{theorem}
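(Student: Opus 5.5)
The plan is to combine Proposition~\ref{pointedfilter} with the general transfer machinery of Section~\ref{Cfilteredobject}. Let $M\in\HM{H}{A}$ and put $\mathcal{C}:=\{\twg{A}{g}\mid g\in G(H)\}$.

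First, I would establish that $M$ is $\mathcal{C}$-filtered in $\HM{H}{A}$. This is exactly Proposition~\ref{pointedfilter}, and I take it as given. Behind it lies Lemma~\ref{hassimpAg}: pointedness of $H$ provides a nonzero element $m\in M_g$ for some $g\in G(H)$, via Lemma~\ref{comodoverpointed}. Right $H$-simplicity then makes the morphism $\twg{A}{g}\to M$, $a\mapsto ma$, of Proposition~\ref{annmor} injective, because $\twg{A}{g}$ is simple by Proposition~\ref{subAAgsame}. This shows every object is $\mathcal{C}$-semiartinian, and Proposition~\ref{altofilt} then produces the filtration.

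Next, I would apply the forgetful functor $U\colon\HM{H}{A}\to\M_A$. It is exact and preserves filtered colimits, since kernels, cokernels and unions of chains of relative Hopf submodules are computed on underlying modules. By Lemma~\ref{fpf}, $U(M)$ is $U(\mathcal{C})$-filtered in the sense of Definition~\ref{def_fobjformodule}. Every object of $U(\mathcal{C})$ is $A$ itself as a right $A$-module, because the twist $\twg{A}{g}$ only changes the coaction. Hence every object of $U(\mathcal{C})$ is free of rank one.

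Finally, Corollary~\ref{trexfree}, or equivalently Proposition~\ref{semiarttoM}(4), gives that $M$ is free. Concretely, each successive quotient $M_{\alpha+1}/M_\alpha\cong A$ is projective, so the filtration splits step by step. This yields $M\cong\bigoplus_{\alpha<\lambda}A$ as right $A$-modules, and the basis consists of lifts of the generators $m_\alpha$ of the subquotients.

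There is no real obstacle at this stage, since all the ingredients are already in place. The only point needing care is that the transfinite splitting in Proposition~\ref{trexprojective} is carried out in $\M_A$, not in $\HM{H}{A}$. The summands are therefore only $A$-submodules, which is all that freeness requires.
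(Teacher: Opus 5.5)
Your proposal is correct and follows the paper's own argument: Proposition~\ref{pointedfilter} gives the $\{\twg{A}{g}\mid g\in G(H)\}$-filtration, Lemma~\ref{fpf} applied to the forgetful functor turns it into an $\{A\}$-filtration of the underlying right $A$-module, and Corollary~\ref{trexfree} then yields freeness. Your observation that the splitting takes place in $\M_A$, and not in $\HM{H}{A}$, is also exactly how the paper's Proposition~\ref{trexprojective} is used.
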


\begin{corollary}\label{subfree}
    Suppose $H$ is pointed.
    Let $A\subset B$ be $H$-comodule algebras.
    If $A$ is right $H$-simple, then $B$ is free as an $A$-module.
\end{corollary}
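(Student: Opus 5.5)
The plan is to realize $B$ as an object of $\HM{H}{A}$ and then apply Theorem~\ref{mainresult}. Let $\rho_B$ denote the $H$-coaction of $B$. We regard $B$ as a left $H$-comodule via $\rho_B$ and as a right $A$-module via right multiplication by elements of $A\subset B$.

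The compatibility condition follows from two facts: $\rho_B$ is an algebra map, and the coaction of $A$ is the restriction of $\rho_B$, since $A$ is an $H$-comodule subalgebra. For $b\in B$ and $a\in A$ we then have
\[\rho_B(ba)=\rho_B(b)\rho_B(a)=\sw{b}{-1}\sw{a}{-1}\otimes \sw{b}{0}\sw{a}{0},\]
which is exactly the condition defining an object of $\HM{H}{A}$. Hence $B\in\HM{H}{A}$.

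Since $H$ is pointed and $A$ is right $H$-simple, Theorem~\ref{mainresult} gives that $B$ is free as a right $A$-module. There is no real obstacle here. The only point to check is that "comodule algebras $A\subset B$" means that $A$ is an $H$-comodule subalgebra, so that the coaction of $A$ agrees with the restriction of $\rho_B$. If instead only an injective $H$-comodule algebra map $A\to B$ were given, the same argument would apply after transporting the structure along this map.
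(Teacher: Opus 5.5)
Your proposal is correct and is essentially the paper's own argument: regard $B$ as an object of $\HM{H}{A}$ using right multiplication by $A$ and the coaction of $B$, then apply Theorem~\ref{mainresult}. Your explicit check that $\rho_B(ba)=\rho_B(b)\rho_B(a)$ gives the compatibility condition fills in the one detail the paper leaves implicit.
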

\begin{proof}
Since $B$ is an object of $\HM{H}{A}$ via the $A$-action given by multiplication, the statement holds by Theorem~\ref{mainresult}.
\end{proof}

\subsection{Simple objects in $\HM{H}{A}$}

\begin{proposition}\label{eqrightsimplicity}
Consider the following conditions:
\begin{itemize}
    \item [\textup{(i)}] $A$ is left $H$-simple.
    \item [\textup{(ii)}] $A$ is right $H$-simple.
    \item [\textup{(iii)}] $A\neq 0$, and any $0\neq a\in A_g$ is invertible in $A$ for any $g\in G(H)$.
\end{itemize}
Then each of \textup{(i)} and \textup{(ii)} implies \textup{(iii)}.
Moreover, if $H$ is pointed, then the three conditions are equivalent.
\end{proposition}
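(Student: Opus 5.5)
For (ii)$\Rightarrow$(iii), take $0\neq a\in A_g$ and consider the right ideal $aA$. Since $\rho(ab)=g\sw{b}{-1}\otimes a\sw{b}{0}$, the ideal $aA$ is an $H$-comodule right ideal. It contains $a\neq 0$, so right $H$-simplicity forces $aA=A$. Hence there is $b\in A$ with $ab=1$, so $a$ has a right inverse.

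To upgrade this to a two-sided inverse, I would pass to the homogeneous component. Apply $\rho$ to $ab=1$, then apply $\id\otimes$ and pair against $g$. Cleaner is to project $b$ onto its $g^{-1}$-component, which I justify as follows. The map $\pi\colon A\to A$, $x\mapsto$ (component of $x$ with coaction $g^{-1}\otimes\cdot$), need not exist in general, since $A$ is not graded. I therefore use a different argument. Let $R=\{x\in A\mid ax=0\}$, the right annihilator of $a$. It is a right ideal, and it is an $H$-subcomodule because $\rho(ax)=g\sw{x}{-1}\otimes a\sw{x}{0}$ and left multiplication by $g$ is injective on $H$. It is proper since $1\notin R$, so $R=0$. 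Now $a(ba-1)=aba-a=0$, so $ba=1$, and $a$ is invertible.

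For (i)$\Rightarrow$(iii) the argument is symmetric. The left ideal $Aa$ is an $H$-comodule left ideal because $\rho(xa)=\sw{x}{-1}g\otimes \sw{x}{0}a$, so $Aa=A$ and $a$ has a left inverse $c$. The left annihilator $\{x\mid xa=0\}$ is an $H$-comodule left ideal, using that right multiplication by $g$ is injective. It is therefore $0$, and from $(ac-1)a=0$ we get $ac=1$.

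For the converse when $H$ is pointed, assume (iii) and let $I\neq 0$ be an $H$-comodule right (resp. left) ideal. Since $I$ is a nonzero $H$-comodule, Lemma~\ref{comodoverpointed} gives $g\in G(H)$ and $0\neq m\in I$ with $\rho(m)=g\otimes m$, i.e. $m\in A_g$. By (iii), $m$ is invertible, so $1\in I$ and $I=A$. Together with $A\neq 0$, this shows that $A$ is right (resp. left) $H$-simple.

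The only delicate point is obtaining two-sidedness of the inverse without a grading; the annihilator trick above handles it, so I expect no real obstacle.
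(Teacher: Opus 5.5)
Your proof is correct and is essentially the paper's argument. The paper notes that $b\mapsto ab$ is a morphism $\twg{A}{g}\to A$ in $\HM{H}{A}$ between simple objects, hence an isomorphism. Your checks that $aA$ and the right annihilator of $a$ are $H$-comodule right ideals are exactly the surjectivity and injectivity of that map, after which $a(ba-1)=0$ finishes the argument in the same way, symmetrically for left simplicity. The pointed converse via Lemma~\ref{comodoverpointed} also matches the paper, and you can simply delete the abandoned projection digression.
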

\begin{proof}
We show the implication (ii)$\Rightarrow$(iii).
Let $g\in G(H)$ and let $0\neq a\in A_g$.
The map $\twg{A}{g} \to A;\; b \mapsto ab$ is an isomorphism in $\HM{H}{A}$ by Propositions~\ref{subAAgsame}, \ref{annmor} and the right $H$-simplicity of $A$.
By surjectivity, $1 = ab$ for some $b\in A$. Moreover, $ba = 1$ by $a(ba) = a = a\cdot 1$ and injectivity. 
The implication (i)$\Rightarrow$(iii) can be shown by the same argument applied to ${A}^{[g]} \to A;\; b \mapsto ba$.

Next, we show the implication (iii)$\Rightarrow$(ii) when $H$ is pointed. Let $I$ be a nonzero $H$-comodule right ideal of $A$.
By Lemma~\ref{comodoverpointed}, there exist $g\in G(H)$ and $0\neq a\in I$ such that $a\in A_g$. Thus, $1=aa^{-1} \in IA = I$.
The implication (iii)$\Rightarrow$(i) follows similarly.
\end{proof}
\begin{remark}
The equivalence of \textup{(i)} and \textup{(ii)} holds even if $H$ is not pointed when $H$ and $A$ are finite-dimensional \cite[Lemma~2.12]{NSS25}.
\end{remark}

\begin{proposition}\label{schurHM}
Let $g\in G(H)$.
There is an isomorphism 
\[\HM{H}{A}(\twg{A}{g}, M) \cong M_g ;\quad \varphi \mapsto \varphi(1)\]
natural in $M\in\HM{H}{A}$.
In particular, for $h\in G(H)$, $\HM{H}{A}(\twg{A}{g}, \twg{A}{h})\cong A_{h^{-1}g}$.
\end{proposition}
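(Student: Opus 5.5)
The plan is to construct the map and its inverse explicitly, the way one proves that $A$ represents the forgetful functor on right $A$-modules, keeping track of the twisted coaction ${}_g\rho$ on $\twg{A}{g}$.

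First, the map $\varphi\mapsto\varphi(1)$ lands in $M_g$. Let $\varphi\colon \twg{A}{g}\to M$ be a morphism in $\HM{H}{A}$. It is colinear, so $\rho(\varphi(1)) = (\mathrm{id}\otimes\varphi)({}_g\rho(1))$. Since $\rho_A$ is an algebra map, $\rho_A(1)=1\otimes 1$, and hence ${}_g\rho(1)=g\otimes 1$. Therefore $\rho(\varphi(1))=g\otimes\varphi(1)$, that is, $\varphi(1)\in M_g$.

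Next, the inverse is $m\mapsto f_m$ with $f_m(a):=ma$. For $m\in M_g$, Proposition~\ref{annmor} says exactly that $f_m$ is a morphism $\twg{A}{g}\to M$ in $\HM{H}{A}$. The two assignments are mutually inverse:
\begin{itemize}
\item $f_m(1)=m$;
\item $\varphi(a)=\varphi(1\cdot a)=\varphi(1)a=f_{\varphi(1)}(a)$, by right $A$-linearity of $\varphi$.
\end{itemize}
Both maps are visibly $\Bbbk$-linear.

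Naturality in $M$ is also immediate. For a morphism $\psi\colon M\to N$, we have $\psi(M_g)\subset N_g$ by colinearity, and $(\psi\circ\varphi)(1)=\psi(\varphi(1))$.

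For the particular case, take $M=\twg{A}{h}$ and compute $(\twg{A}{h})_g$. An element $a$ lies in it iff $h\sw{a}{-1}\otimes\sw{a}{0}=g\otimes a$. Applying the linear automorphism $x\otimes y\mapsto h^{-1}x\otimes y$ of $H\otimes A$, this is equivalent to $\rho(a)=h^{-1}g\otimes a$, i.e.\ $a\in A_{h^{-1}g}$.

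None of these steps is a genuine obstacle. The only points needing care are using $\rho_A(1)=1\otimes1$ and correctly untwisting the coaction in the final computation.
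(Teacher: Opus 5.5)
Your proposal is correct and follows essentially the same route as the paper: you show $\varphi(1)\in M_g$ via colinearity and $\rho_A(1)=1\otimes 1$, take the inverse $m\mapsto(a\mapsto ma)$ from Proposition~\ref{annmor}, and obtain naturality from $(\psi\circ\varphi)(1)=\psi(\varphi(1))$. The special case follows from $(\twg{A}{h})_g=A_{h^{-1}g}$; you simply spell out the mutual-inverse check and the untwisting a bit more explicitly than the paper does.
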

\begin{proof}
We show $\varphi(1)\in M_g$ for $\varphi \colon\twg{A}{g} \to M$ in $\HM{H}{A}$.
Since $\varphi\colon\twg{A}{g} \to M$ is an $H$-comodule map and $\rho_A$ is an algebra map, we have
$\rho_M(\varphi(1)) = (\mathrm{id }\otimes \varphi)\circ {}_g\rho_A(1)=g\otimes \varphi(1)$.
The inverse correspondence is given by $M_g \to \HM{H}{A}(\twg{A}{g}, M) ;\; m \mapsto (a\mapsto ma)$, as in Proposition~\ref{annmor}.
Naturality follows from $(f \circ \varphi)(1) = f(\varphi(1))$ for $f\colon M \to N$ and $\varphi\colon \twg{A}{g} \to M$ in $\HM{H}{A}$.

The last assertion follows from $(\twg{A}{h})_g = A_{h^{-1}g}$.
\end{proof}

\begin{proposition}\label{FAsubgroup}
If $A$ is right $H$-simple, then $F(A):= \{g\in G(H)\mid A_g\neq 0\}$ is a subgroup of $G(H)$.
\end{proposition}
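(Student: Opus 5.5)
We need to show that $F(A)$ contains $1$, is closed under products, and is closed under inverses. The key inputs are that $\rho_A$ is an algebra map and the invertibility statement of Proposition~\ref{eqrightsimplicity}, which needs only right $H$-simplicity.

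\emph{Identity.} Since $A$ is right $H$-simple, it is nonzero, so $1\neq 0$. Because $\rho_A$ is an algebra map, $\rho_A(1)=1\otimes 1$, so $1\in A_1$ and hence $1\in F(A)$.

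\emph{Products.} Let $g,h\in F(A)$ and choose $0\neq a\in A_g$ and $0\neq b\in A_h$. Multiplicativity of $\rho_A$ gives
\[\rho_A(ab)=\rho_A(a)\rho_A(b)=(g\otimes a)(h\otimes b)=gh\otimes ab,\]
so $ab\in A_{gh}$. It remains to check that $ab\neq 0$. By the implication (ii)$\Rightarrow$(iii) of Proposition~\ref{eqrightsimplicity}, which holds without pointedness, $a$ and $b$ are invertible in $A$. A product of invertible elements is invertible, hence nonzero. Therefore $gh\in F(A)$.

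\emph{Inverses.} Let $g\in F(A)$ and take $0\neq a\in A_g$. Again by Proposition~\ref{eqrightsimplicity}, $a$ has an inverse $a^{-1}$, which is nonzero. To show $a^{-1}\in A_{g^{-1}}$, apply the algebra map $\rho_A$ to $aa^{-1}=1$:
\[(g\otimes a)\,\rho_A(a^{-1})=1\otimes 1.\]
Now multiply on the left by $g^{-1}\otimes a^{-1}$. This gives
\[\rho_A(a^{-1})=(g^{-1}\otimes a^{-1})(g\otimes a)\rho_A(a^{-1})=(g^{-1}\otimes a^{-1})(1\otimes 1)=g^{-1}\otimes a^{-1}.\]
Thus $a^{-1}\in A_{g^{-1}}$ and $g^{-1}\in F(A)$.

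These three steps show that $F(A)$ is a subgroup of $G(H)$. None of them is a real obstacle; the only point needing care is the nonvanishing of the products $ab$ and of $a^{-1}$, and both follow from the invertibility given by Proposition~\ref{eqrightsimplicity}.
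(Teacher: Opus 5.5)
Your proof is correct and follows essentially the paper's argument. Both rely only on $\rho_A$ being an algebra map and on the invertibility of nonzero elements of $A_g$ given by Proposition~\ref{eqrightsimplicity}; the paper simply packages your product and inverse steps into the single subgroup check $0\neq b^{-1}a\in A_{h^{-1}g}$, using $\rho(b^{-1})=\rho(b)^{-1}=h^{-1}\otimes b^{-1}$.
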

\begin{proof}
Since $\rho(1)=1\otimes1$, $e\in F(A)$.
For $g,h\in F(A)$, there exist $0\neq a\in A_g$ and $0\neq b\in A_h$,
which are invertible by Proposition~\ref{eqrightsimplicity}.
Since $\rho$ is an algebra map, $\rho(b^{-1}a) = \rho(b)^{-1}\rho(a) = (h\otimes b)^{-1}(g\otimes a) = (h^{-1}\otimes b^{-1})(g\otimes a)= h^{-1}g\otimes b^{-1}a$.
Therefore, $0 \neq b^{-1}a \in A_{h^{-1}g}$, and hence $h^{-1}g\in F(A)$.
\end{proof}

So when $A$ is right $H$-simple, we can consider $ G(H) / F(A)$, which is the set of left cosets of $F(A)$ in $G(H)$.
\begin{theorem}\label{pHsHMsimpleclassi}
Let $H$ be a pointed Hopf algebra and let $A$ be a right $H$-simple $H$-comodule algebra.
Then the following map gives a one-to-one correspondence.
\[G(H)/ F(A) \longrightarrow \mathrm{Simp}(\HM{H}{A}) ;\quad [g] \mapsto \twg{A}{g},\]
where  $\mathrm{Simp}(\HM{H}{A})$ denotes the isomorphism classes of simple objects in $\HM{H}{A}$.
\end{theorem}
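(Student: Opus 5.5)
The map $[g]\mapsto \twg{A}{g}$ needs three checks: it is well defined, it is surjective, and it is injective. Each one reduces to results already established.

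\emph{Target and surjectivity.} Since $A$ is right $H$-simple, Proposition~\ref{subAAgsame} shows that each $\twg{A}{g}$ is a simple object of $\HM{H}{A}$, so the map lands in $\mathrm{Simp}(\HM{H}{A})$. Surjectivity is exactly the first item of Lemma~\ref{hassimpAg}, which uses that $H$ is pointed: every simple object is isomorphic to some $\twg{A}{g}$.

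\emph{Isomorphism criterion.} Well-definedness and injectivity both come from one statement: for $g,h\in G(H)$,
\[\twg{A}{g}\cong \twg{A}{h}\iff h^{-1}g\in F(A).\]
By Proposition~\ref{schurHM}, $\HM{H}{A}(\twg{A}{g},\twg{A}{h})\cong A_{h^{-1}g}$. If $h^{-1}g\notin F(A)$, this Hom space is zero, so the two nonzero objects are not isomorphic. If $h^{-1}g\in F(A)$, choose $0\neq a\in A_{h^{-1}g}$. The corresponding morphism $\twg{A}{g}\to\twg{A}{h}$ is $b\mapsto ab$, which is nonzero because it sends $1\mapsto a$. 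It is a nonzero morphism between simple objects, so it is an isomorphism by Schur's lemma. Alternatively, $a$ is invertible by Proposition~\ref{eqrightsimplicity}, which gives the inverse directly.

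\emph{Cosets.} Finally, $h^{-1}g\in F(A)$ is precisely the condition $gF(A)=hF(A)$ for left cosets. This uses that $F(A)$ is a subgroup, which is Proposition~\ref{FAsubgroup}. Hence the map is well defined on $G(H)/F(A)$ and injective, and together with surjectivity it is a bijection.

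The main obstacle is only bookkeeping: one has to get the side of the cosets right, that is, use $h^{-1}g$ rather than $gh^{-1}$. The identity $(\twg{A}{h})_g=A_{h^{-1}g}$ from Proposition~\ref{schurHM} settles this.
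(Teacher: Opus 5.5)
Your proposal is correct and follows essentially the same route as the paper. You obtain the criterion $\twg{A}{g}\cong\twg{A}{h}\iff A_{h^{-1}g}\neq 0$ from Propositions~\ref{subAAgsame} and~\ref{schurHM}, which gives well-definedness and injectivity on left cosets, and you get surjectivity from Lemma~\ref{hassimpAg}.
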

\begin{proof}
    For $g,h\in G(H)$, 
    $g = h \text{ in } G(H)/F(A) \iff h^{-1}g\in F(A) \iff A_{h^{-1}g}\neq 0$.
    This is equivalent to $\twg{A}{g} \cong \twg{A}{h}$ by Propositions~\ref{subAAgsame} and~\ref{schurHM}.
    Thus, the correspondence is well-defined and injective.
    The surjectivity of the correspondence follows from Lemma~\ref{hassimpAg}.
\end{proof}

\begin{theorem}\label{pHsHMsemisimple}
Let $H$ be a pointed Hopf algebra and let $A$ be a right $H$-simple $H$-comodule algebra.
The following are equivalent:
\begin{itemize}
    \item [\textup{(i)}] $\HM{H}{A}$ is a semisimple category.
    \item [\textup{(ii)}] $A$ is a projective object in $\HM{H}{A}$.
    \item [\textup{(iii)}] There is an $H$-comodule map $\varphi \colon H\to A$ with $\varphi(1) = 1$.
    \item [\textup{(iv)}] $A$ is an injective object in ${}^H\M$.
    \item [\textup{(v)}] The structure map $\rho\colon A\to H\otimes A$ splits in $\HM{H}{A}$.
    \item [\textup{(vi)}] $A$ is coflat as an $H$-comodule.
    \item [\textup{(vii)}] The functor $(-)^{\mathrm{co}H}\colon \HM{H}{A} \to \mathrm{Vect}$ is exact.
\end{itemize}
\end{theorem}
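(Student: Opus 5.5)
The plan is to prove the equivalences in two clusters. The first cluster, (i)$\Leftrightarrow$(ii), uses the classification of simples. The second cluster, (ii)$\Leftrightarrow$(iii)$\Leftrightarrow$(iv)$\Leftrightarrow$(v)$\Leftrightarrow$(vi)$\Leftrightarrow$(vii), is mostly standard relative Hopf module theory.

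\emph{First cluster.} By Theorem~\ref{pHsHMsimpleclassi}, every simple object is isomorphic to some $\twg{A}{g}$. The functor $\twg{(-)}{g}$ is an autoequivalence of $\HM{H}{A}$ with inverse $\twg{(-)}{g^{-1}}$. Hence $\twg{A}{g}$ is projective if and only if $A$ is projective. Proposition~\ref{semisimpleHM} (iii)$\Leftrightarrow$(i) then gives (i)$\Leftrightarrow$(ii), since $A$ is simple by right $H$-simplicity.

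\emph{Projectivity and exactness.} By Proposition~\ref{schurHM} with $g=e$, we have $\HM{H}{A}(A,-)\cong (-)_e=(-)^{\mathrm{co}H}$. So (ii)$\Leftrightarrow$(vii) is immediate.

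\emph{The free object $H\otimes A$.} For (v), I regard $H\otimes A$ as an object of $\HM{H}{A}$ with coaction $\Delta\otimes\mathrm{id}$ and diagonal right action $(h\otimes b)a=h\sw{a}{-1}\otimes b\sw{a}{0}$. The map $\rho\colon A\to H\otimes A$ is then a morphism in $\HM{H}{A}$.

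\emph{Showing (v)$\Rightarrow$(ii).} The key observation is that $H\otimes A$ is projective in $\HM{H}{A}$. Indeed, $H\otimes A\cong H\otimes A$ with action only on the right factor, via the standard twist $h\otimes b\mapsto h\sw{b}{-1}\otimes \sw{b}{0}$. It is then the image of $A$ under the right adjoint $H\otimes(-)$ of the forgetful functor to $\M_A$. Since that gives the wrong direction, I would instead show directly that $\HM{H}{A}(H\otimes A,-)$ is exact. A morphism $H\otimes A\to M$ is determined by a $\Bbbk$-linear map $H\to M$ whose composite with $\rho_M$ is compatible; this is where care is needed. Rather than fight this, I route (v)$\Rightarrow$(ii) through (vii): if $\rho$ splits, then $M^{\mathrm{co}H}$ becomes a natural retract of something exact.

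\emph{The comodule-theoretic links.} The implication (iii)$\Rightarrow$(v) uses the standard Doi construction: given $\varphi$, define
\[\pi\colon H\otimes A\to A,\qquad \pi(h\otimes a)=\varphi(h\sw{a}{-1}S(\sw{a}{-2}))\ldots.\]
More simply, I take $\pi(h\otimes a)=\varphi(h)a$ twisted appropriately, and check that it is right $A$-linear and colinear with $\pi\rho=\mathrm{id}$ using $\varphi(1)=1$. For (v)$\Rightarrow$(iv), $A$ is then a direct summand, as a comodule, of $H\otimes A$, which is an injective comodule. For (iv)$\Rightarrow$(iii), injectivity extends the unit map $\Bbbk 1_H\to A$, $1\mapsto 1$, along $\Bbbk\hookrightarrow H$. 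The equivalence (iv)$\Leftrightarrow$(vi) is the standard fact that injective equals coflat for comodules over a coalgebra (over a field, Takeuchi).

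\emph{Closing the loop.} Finally, for (vii)$\Rightarrow$(iii) I use a Doi-type argument. Apply exactness of $(-)^{\mathrm{co}H}$ to the surjection $H\otimes A\to \Bbbk\ldots$; more concretely, use $\HM{H}{A}(A,-)$ exact to lift through a suitable epimorphism onto $A$ built from $H\otimes A$.

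\emph{Main obstacle.} I expect the main obstacle to be the precise Doi-type equivalence (iii)$\Leftrightarrow$(v)$\Leftrightarrow$(ii). This amounts to constructing the splitting of $\rho$ from a unital comodule map, and showing that a splitting yields projectivity of $A$. For the latter, my first attempt will be to prove that $H\otimes A$, suitably structured, is projective in $\HM{H}{A}$. I would do this by using pointedness and Theorem~\ref{mainresult}-style filtration arguments if the direct approach fails.
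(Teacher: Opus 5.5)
Your (i)$\Leftrightarrow$(ii) argument is the paper's. Several of your links among (ii)--(vii) are also sound:
(ii)$\Leftrightarrow$(vii) via Proposition~\ref{schurHM} with $g=e$; (v)$\Rightarrow$(iv), because $H\otimes A$ with coaction $\Delta\otimes\mathrm{id}$ is cofree; (iv)$\Rightarrow$(iii), by extending $\Bbbk 1_H\to A$; and (iv)$\Leftrightarrow$(vi).
(iii)$\Rightarrow$(v) also works once the formula is made precise as $\pi(h\otimes b)=\varphi(hS(\sw{b}{-1}))\sw{b}{0}$, which is $h\otimes b\mapsto\varphi(h)b$ transported along your twist.
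The paper does not prove (ii)--(vii) itself. It cites \cite[Proposition~1.3]{VZ96}, noting that the bijective antipode assumed there holds because $H$ is pointed.

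\textbf{The gap.} Nothing in your plan gets from (iii)--(vi) back to (ii) or (vii).
Your ``key observation'' that $H\otimes A$ is projective in $\HM{H}{A}$ is false in general. Take $H=\Bbbk[x]$ with $x$ primitive, $\mathrm{char}\,\Bbbk=0$, and $A=\Bbbk$ with trivial coaction, which is right $H$-simple. Then $\HM{H}{A}={}^H\M$ and $H\otimes A=H$ is not a projective comodule. Indeed, the surjection $\bigoplus_n H_n\to H$ from the coradical filtration cannot split: $d/dx=(\xi\otimes\mathrm{id})\Delta$, with $\xi(x^n)=\delta_{n,1}$, commutes with comodule maps, is surjective on $H$, and is nilpotent on each $H_n$.
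Under (v), $H\otimes A$ is projective, but only as a consequence of the theorem you are proving.
Your filtration fallback is circular. Proposition~\ref{trexprojectiveHM} needs the factors $\twg{A}{g}$ to be projective in $\HM{H}{A}$, which is (ii). Theorem~\ref{mainresult} gives freeness over $A$, not projectivity in $\HM{H}{A}$.

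\textbf{The missing idea} is Doi's projection onto coinvariants. Given $\varphi$ as in (iii), put $P_M(m):=\sw{m}{0}\,\varphi(S^{-1}(\sw{m}{-1}))$ for $M\in\HM{H}{A}$. Then:
colinearity of $\varphi$ together with $\sw{y}{2}S^{-1}(\sw{y}{1})=\varepsilon(y)1$ gives $\rho_M(P_M(m))=1\otimes P_M(m)$;
$\varphi(1)=1$ makes $P_M$ the identity on $M^{\mathrm{co}H}$;
and $f\circ P_M=P_N\circ f$ for every morphism $f\colon M\to N$.
So if $f$ is surjective and $n=f(m)\in N^{\mathrm{co}H}$, then $n=P_N(f(m))=f(P_M(m))$ with $P_M(m)\in M^{\mathrm{co}H}$, which is (vii).
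Because $M$ is a left comodule but a right module, the same formula with $S$ in place of $S^{-1}$ does not in general produce coinvariants. This is exactly where bijectivity of the antipode is needed (automatic for pointed $H$, \cite[Corollary~7.6.4]{Radtext}), and your proposal never invokes it.

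Your sketch of (vii)$\Rightarrow$(iii) is likewise not an argument, but that direction is easy to repair with what you already have. You have proved (ii)$\Rightarrow$(i), and in a semisimple category every monomorphism splits. In particular $\rho\colon A\to H\otimes A$ splits, which is (v).
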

\begin{proof}
First, we show (i)$\Leftrightarrow$(ii). 
By Proposition~\ref{semisimpleHM} and Theorem~\ref{pHsHMsimpleclassi},
the condition~(i) is equivalent to the condition that $\twg{A}{g}$ is a projective object in $\HM{H}{A}$ for any $g\in G(H)$.
So it suffices to show that if (ii) holds, then for any $g\in G(H)$, $\twg{A}{g}$ is a projective object in $\HM{H}{A}$.
This follows because the functor $\twg{(-)}{g} \colon \HM{H}{A} \to \HM{H}{A}$ is an isomorphism of categories, whose inverse functor is $\twg{(-)}{g^{-1}}$,
and so this functor preserves projective objects.

The equivalence of conditions (ii)--(vii) follows from \cite[Proposition~1.3]{VZ96}.
Note that the cited proposition assumes that $H$ has a bijective antipode,
which is satisfied by the fact that every pointed Hopf algebra has a bijective antipode (see, for example \cite[Corollary~7.6.4]{Radtext}).
\end{proof}

\subsection{Semisimplicity in the finite-dimensional case}
We recall some notions about injective $H$-comodules (see, for example \cite[Section~3.5]{Radtext} and~\cite[Section~2.4]{DNRtext})
and use them to examine a criterion in Theorem~\ref{pHsHMsemisimple} in the finite-dimensional case.

An $H$-comodule $N$ is called an \emph{essential extension} of $M$
if $M$ is an $H$-subcomodule of $N$ and
for every $H$-subcomodule $L$ of $N$, if $M\cap L= 0$, then $L=0$.
An \emph{injective envelope} of $M\in {}^H\M$ is an essential extension which is injective in ${}^H\M$.
An injective envelope is unique up to isomorphism. 
We denote the injective envelope of $M\in{}^H\M$ by $E(M)$.

For a monomorphism $i\colon M\hookrightarrow  N$, $i(M)\subset N$ is an essential extension
if and only if for any morphism $f\colon N\to X$, $f\circ i$ is a monomorphism implies that $f$ is a monomorphism.
So a category isomorphism ${}^H \M \to {}^H \M$ preserves essential extensions.
Since a category isomorphism also preserves injective objects, a category isomorphism preserves injective envelopes.

\begin{lemma}\label{EMdim}
Let $H$ be a finite-dimensional pointed Hopf algebra. For a finite-dimensional $H$-comodule $M$,
\[\dim E(M) = \dim \soc{M} \frac{\dim H}{\dim \corad{H}}.\]
In particular, $M$ is injective in ${}^H \M$ if and only if \[\dim M = {\dim \soc{M}}\dfrac{\dim H}{\dim \corad{H}}.\]
\end{lemma}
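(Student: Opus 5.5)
We use the standard description of injective envelopes over a finite-dimensional coalgebra, together with the fact that in the pointed case all indecomposable injective comodules have the same dimension. Since $H$ is pointed, $\corad{H}=\Bbbk G(H)$, and every simple left $H$-comodule is one-dimensional, of the form $\Bbbk_g$ for some $g\in G(H)$. The plan has four steps.

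\textbf{Step 1: reduce to simple comodules.} By definition $\soc{M}$ is a direct sum of simple subcomodules, so $\soc{M}\cong\bigoplus_{i=1}^n \Bbbk_{g_i}$ with $n=\dim\soc{M}$. Because $M$ is finite-dimensional, $\soc{M}\subset M$ is an essential extension: every nonzero subcomodule contains a simple subcomodule, which lies in the socle. Hence $E(M)=E(\soc{M})$. A finite direct sum of injective envelopes is an injective envelope of the direct sum, so $E(M)\cong\bigoplus_i E(\Bbbk_{g_i})$. It therefore suffices to show
\[\dim E(\Bbbk_g)=\frac{\dim H}{\dim\corad{H}} \quad\text{for every } g\in G(H).\]

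\textbf{Step 2: use the twisting isomorphism.} The functor $M\mapsto \twg{M}{g}$ is a category isomorphism ${}^H\M\to{}^H\M$, with inverse $\twg{(-)}{g^{-1}}$. As observed just before the lemma, such a functor preserves injective envelopes. It sends $\Bbbk_e$ to $\Bbbk_g$, so $E(\Bbbk_g)\cong\twg{E(\Bbbk_e)}{g}$. In particular all $E(\Bbbk_g)$ have the same dimension, call it $d$.

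\textbf{Step 3: compute $d$ by decomposing $H$.} The left comodule $H$ is injective and $\soc{H}=\corad{H}=\bigoplus_{g}\Bbbk g$. This sum is finite, with $|G(H)|=\dim\corad{H}$ terms. As in Step 1, $\corad{H}\subset H$ is essential, because every nonzero subcomodule of $H$ contains a simple one. So $H=E(\soc{H})\cong\bigoplus_{g\in G(H)}E(\Bbbk g)$. Taking dimensions gives $\dim H=d\cdot\dim\corad{H}$, which yields the formula.

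\textbf{Step 4: the injectivity criterion.} The embedding $M\subset E(M)$ always holds, and $M$ is injective exactly when $M=E(M)$. For finite-dimensional $M$ this is equivalent to $\dim M=\dim E(M)$, which is the stated equality.

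The main obstacle is Step 3: identifying $H$ with $E(\soc H)$ and decomposing it as a direct sum indexed by $G(H)$. This rests on the general facts that injective envelopes exist and are unique, and that envelopes commute with finite direct sums, as in \cite[Section~2.4]{DNRtext}. I would cite those facts rather than reprove them. If needed, one can make the decomposition explicit instead, via the idempotent-free splitting $H\cong\bigoplus_g E(\Bbbk g)$ coming from a decomposition of $1\in H^*$ into orthogonal idempotents lifting those of $(\corad H)^*\cong\Bbbk^{G(H)}$.
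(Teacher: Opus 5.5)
Your proposal is correct and follows essentially the same route as the paper. The argument runs through the same four ingredients: $E(M)=E(\soc{M})$ by essentiality of the socle, the decomposition $H\cong\bigoplus_{g\in G(H)}E(\Bbbk g)$, the twisting isomorphism $\twg{(-)}{g}$ showing that all $E(\Bbbk g)$ have a common dimension $d$, and the count $\dim H=d\,|G(H)|$. The only minor difference is that the paper quotes the decomposition of $H$ from the literature, whereas you derive it directly from the injectivity of $H$ and the essentiality of $\corad{H}\subset H$.
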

\begin{proof}
First, $\soc{M} \subset M$ is an essential extension by \cite[Corollary~2.4.12]{DNRtext}.
By the transitivity of essential extensions, $\soc{M}\subset E(M)$ is an essential extension.
In particular, $E(M)$ is an injective envelope of $\soc{M}$ since $E(M)$ is injective.
Hence, we have $E(M) = E(\soc{M})$. So it suffices to show $\dim E(\soc{M}) =  \dim \soc{M}  \dim H/\dim \corad{H}$.

We denote by $\Bbbk a_g$ the one-dimensional $H$-comodule such that $\rho(a_g)=g\otimes a_g$ for $g\in G(H)$.
By Lemma~\ref{comodoverpointed},
we can write $\corad{H} \cong \bigoplus_{g\in G(H)}\Bbbk a_g$, $\soc{M} \cong \bigoplus_{g\in G(H)}(\Bbbk a_g)^{m_g}$ as $H$-comodules where $m_g = \dim M_g$.
In particular, $\dim \soc{M} = \sum_{g\in G(H)}m_g$.
By \cite[Theorem~2.4.16]{DNRtext}, we have 
\[H \cong \bigoplus_{g\in G(H)} E(\Bbbk a_g).\]
Since finite direct sums preserve injective envelopes, 
\[E(\soc{M}) \cong \bigoplus_{g\in G(H)}(E(\Bbbk a_g))^{m_g}.\]

The functor $\twg{(-)}{g} \colon {}^{H}\M \to {}^{H}\M$ is an isomorphism of categories, whose inverse functor is $\twg{(-)}{g^{-1}}$.
So this functor preserves injective envelopes.
Moreover, $\Bbbk a_g \cong \twg{(\Bbbk a_e)}{g}$ as an $H$-comodule.
Hence, $E(\Bbbk a_g) \cong \twg{E(\Bbbk a_e)}{g}$ as an $H$-comodule.
\[H \cong \bigoplus_{g\in G(H)} \twg{E(\Bbbk a_e)}{g}, \quad \quad E(\soc{M}) \cong \bigoplus_{g\in G(H)}( \twg{E(\Bbbk a_e)}{g})^{m_g}.\]
Let $d := \dim E(\Bbbk a_e)$.
Then we have $\dim H = d\cdot |G(H)| = d\cdot \dim \corad{H}$ and
$\dim E(\soc{M}) = \sum_{g\in G(H)}m_g \cdot d= \dim \soc{M}  \dim H/\dim \corad{H}$.

The last assertion follows from
\begin{align*}
    \dim M ={\dim \soc{M}}\frac{\dim H}{\dim \corad{H}} &\iff \dim M = \dim E(M) \\
                &\iff M=E(M) \iff  M \text{ is injective}.
\end{align*}
The proof is done.
\end{proof}

\begin{theorem}\label{fincasesemisimple}
Let $H$ be a finite-dimensional pointed Hopf algebra and let $A$ be a finite-dimensional right $H$-simple $H$-comodule algebra.
Then the following are equivalent:
\begin{itemize}
    \item  [\textup{(i)}] \vphantom{$\displaystyle \frac{\dim A}{\dim \soc{A}}$}
            $\HM{H}{A}$ is a semisimple category.
    \item  [\textup{(ii)}] $\displaystyle \frac{\dim A}{\dim \soc{A}} = \frac{\dim H}{\dim \corad{H}}$.
\end{itemize}
\end{theorem}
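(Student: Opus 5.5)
The plan is to reduce the statement to Lemma~\ref{EMdim} through the chain of equivalences in Theorem~\ref{pHsHMsemisimple}. Since $H$ is pointed and $A$ is right $H$-simple, Theorem~\ref{pHsHMsemisimple} shows that $\HM{H}{A}$ is a semisimple category if and only if condition~(iv) holds, namely that $A$ is an injective object of ${}^H\M$. The question is therefore purely about the $H$-comodule structure of $A$.

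Next, I apply Lemma~\ref{EMdim} to $M = A$, which is a finite-dimensional $H$-comodule by hypothesis, with $H$ finite-dimensional and pointed. The lemma says that $A$ is injective in ${}^H\M$ if and only if
\[\dim A = \dim \soc{A}\,\frac{\dim H}{\dim \corad{H}}.\]
To rewrite this as condition~(ii), I divide by $\dim\soc{A}$, so I must check that $\soc{A}\neq 0$. This holds because $1\in A^{\mathrm{co}H}=A_e$ and $A\neq 0$; more generally, Lemma~\ref{comodoverpointed} gives a nonzero element of some $A_g$, and $A_g \subset \soc{A}$ by Lemma~\ref{socorad}. Hence $\dim\soc{A}\ge 1$, and the displayed equality is equivalent to $\dim A/\dim\soc{A} = \dim H/\dim\corad{H}$.

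Combining these equivalences gives (i)$\Leftrightarrow$(ii). I do not expect a real obstacle. The only points to verify are that the hypotheses of Theorem~\ref{pHsHMsemisimple}, in particular the bijective antipode used there, and those of Lemma~\ref{EMdim} are all satisfied. Both are automatic under the standing assumptions that $H$ is finite-dimensional pointed and $A$ is finite-dimensional right $H$-simple.
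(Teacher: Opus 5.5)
Your proposal is correct and follows essentially the same route as the paper: use Theorem~\ref{pHsHMsemisimple} to reduce semisimplicity of $\HM{H}{A}$ to injectivity of $A$ in ${}^H\M$, then apply Lemma~\ref{EMdim}. Your extra check that $\soc{A}\neq 0$ (since $0\neq 1\in A_e\subset\soc{A}$) fills in a small detail the paper leaves implicit when it divides by $\dim\soc{A}$.
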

\begin{proof}
By Theorem~\ref{pHsHMsemisimple}, it suffices to show that the condition (ii) is equivalent to $A$ being injective in ${}^H\M$.
This follows from Lemma~\ref{EMdim}.
\end{proof}

\begin{remark}\label{mainresultrem}
The arguments in this section can be carried out in the same way with $A^{[g]} \in {}^H_A\M$. 
So we can obtain the left $H$-simple versions of Theorems~\ref{mainresult}, \ref{pHsHMsimpleclassi}, \ref{pHsHMsemisimple} and~\ref{fincasesemisimple} for ${}^H_A \M$,
though we do not need to distinguish between left $H$-simplicity and right $H$-simplicity in view of Proposition~\ref{eqrightsimplicity}.
In particular, the left-handed version of the correspondence in Theorem~\ref{pHsHMsimpleclassi} is given by
\[F(A)\backslash G(H) \longrightarrow \mathrm{Simp}({}^{H}_A\M) ;\quad [g] \mapsto A^{[g]},\]
where $F(A)\backslash G(H)$ is the set of right cosets of $F(A)$ in $G(H)$.
\end{remark}

\begin{corollary}
Assume the base field $\Bbbk$ is algebraically closed of characteristic zero. Let $N>1$ be an odd integer and let $q\in\Bbbk$ be a root of unity of order $N$.
Set $H:=u_q(\mathfrak{sl}_2)$, the small quantum group.
Let $A$ be a finite-dimensional right $H$-simple $H$-comodule algebra.
Then the following are equivalent:
\begin{itemize}
    \item [\textup{(i)}]  $\HM{H}{A}$ is a semisimple category.
    \item [\textup{(ii)}]  $A$ is isomorphic as an $H$-comodule algebra to either $\mathscr{A}_3(r;\xi,\zeta)$ or $\mathscr{A}_3(N;\xi,\zeta,\eta)$ for some $\xi,\zeta,\eta \in \Bbbk$ and some positive divisor $r$ of $N$ with $r<N$.
\end{itemize}
Here $\mathscr{A}_3(r;\xi,\zeta)$ and $\mathscr{A}_3(N;\xi,\zeta,\eta)$ are the $H$-comodule algebras given in \cite[Section~5.6]{NSS25}.
\end{corollary}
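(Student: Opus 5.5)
The plan is to reduce the corollary to Theorem~\ref{fincasesemisimple} and then use the classification in \cite{NSS25}. Since $N$ is odd and $\Bbbk$ is algebraically closed of characteristic zero, $H=u_q(\mathfrak{sl}_2)$ is a finite-dimensional pointed Hopf algebra. It has $G(H)=\langle K\rangle\cong \mathbb{Z}/N$ and $\dim H=N^3$, so $\dim H/\dim\corad{H}=N^2$. By Theorem~\ref{fincasesemisimple}, condition (i) is therefore equivalent to
\[\frac{\dim A}{\dim\soc{A}}=N^2 .\]
The task reduces to computing $\dim A$ and $\dim\soc{A}$ for every finite-dimensional right $H$-simple $H$-comodule algebra $A$, and checking which ones give the ratio $N^2$.

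For the computation I would invoke the classification in \cite[Section~5.6]{NSS25}. Up to isomorphism of $H$-comodule algebras, it lists the finite-dimensional right $H$-simple $H$-comodule algebras as several families, $\mathscr{A}_1,\mathscr{A}_2,\mathscr{A}_3,\dots$, indexed by a divisor $r$ of $N$ and some scalar parameters. For each family I would read off a PBW-type basis, of the form $\{x^i y^j k^l\}$ with $k$ a grouplike generator of order $r$ or $N$ and exponents bounded by $N$ or $1$ depending on which nilpotent generators survive; this gives $\dim A$.

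For $\dim\soc{A}$, Lemma~\ref{socorad} gives $\soc{A}=\bigoplus_{g\in G(H)}A_g$. By Proposition~\ref{eqrightsimplicity} and Proposition~\ref{FAsubgroup}, each $A_g$ with $g\in F(A)$ contains an invertible element $u_g$, and $A_g=u_gA^{\mathrm{co}H}$. Hence
\[\dim\soc{A}=|F(A)|\cdot\dim A^{\mathrm{co}H}.\]
So for each family I only need $F(A)$, which is the subgroup of grouplikes hit by the grouplike-type generators (of order $r$ or $N$), and the coinvariants, which are typically $\Bbbk$ or a small matrix-like algebra depending on the parameters.

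The ratio then works out family by family. For the $\mathscr{A}_3$ families, where both nilpotent-type generators appear with full exponent range $N$, I expect $\dim A=rN^2$ (respectively $N^3$) and $\dim\soc{A}=r$ (respectively $N$), so the ratio is $N^2$. For the remaining families, which lack one of the nilpotent directions, the ratio should be at most $N$, so (i) fails for them.

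The main obstacle is the bookkeeping: extracting exact bases and coactions from \cite{NSS25} and verifying which grouplike-homogeneous components are nonzero. This is especially delicate in the degenerate parameter cases, and for $\mathscr{A}_3(N;\xi,\zeta,\eta)$ one must check that $r=N$ genuinely behaves differently. For a non-$\mathscr{A}_3$ algebra with small socle, I would first try the shortcut of checking whether some coaction fails to see a full $N$-string of skew-primitive powers, which bounds $\dim A<N^2\dim\soc{A}$ directly.
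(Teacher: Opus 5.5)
Your proposal is correct and follows the paper's proof. The paper also uses $\dim H=N^3$ and $|G(H)|=N$ to reduce, via Theorem~\ref{fincasesemisimple}, to the condition $\dim A/\dim\soc{A}=N^2$, and then reads this ratio off the complete list in \cite[Section~5]{NSS25}. One small correction: by Proposition~\ref{ACFcoin}, every finite-dimensional right $H$-simple $A$ here has $A^{\mathrm{co}H}=\Bbbk$ (never a matrix-like algebra), so your formula simplifies to $\dim\soc{A}=|F(A)|$.
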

\begin{proof}
We have $\dim u_q(\mathfrak{sl}_2)/\dim \corad{u_q(\mathfrak{sl}_2)} = N^2$.
Moreover, \cite[Section~5]{NSS25} gives the complete list of right $H$-simple $H$-comodule algebras, 
and a comodule algebra $A$ in the list that satisfies $\dim A/\dim \soc{A}=N^2$ is $\mathscr{A}_3(r;\xi,\zeta)$ or $\mathscr{A}_3(N;\xi,\zeta,\eta)$
for some $\xi,\zeta,\eta \in \Bbbk$ and some positive divisor $r$ of $N$ with $r<N$.
Thus, the statement follows from Theorem~\ref{fincasesemisimple}.
\end{proof}

\subsection{Classical consequences}

\begin{proposition}\label{coidealsimple}
Let $H$ be a pointed Hopf algebra and let $K$ be a left coideal subalgebra of $H$. Then the following are equivalent:
\begin{itemize}
    \item [\textup{(i)}] $K$ is left $H$-simple.
    \item [\textup{(ii)}] $K$ is right $H$-simple.
    \item [\textup{(iii)}]  $G(K):=G(H)\cap K$ is a subgroup of $G(H)$.
\end{itemize}
\end{proposition}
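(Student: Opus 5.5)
The plan is to reduce everything to Proposition~\ref{eqrightsimplicity}. There, with $H$ pointed, left $H$-simplicity, right $H$-simplicity and condition (iii) (namely $A\neq 0$ and every nonzero $a\in A_g$ invertible in $A$) are shown to be equivalent. Apply it with $A=K$, which is a left $H$-comodule algebra via $\Delta|_K\colon K\to H\otimes K$. This immediately gives (i)$\Leftrightarrow$(ii), and it remains only to show that condition (iii) of Proposition~\ref{eqrightsimplicity} for $K$ is equivalent to $G(K)$ being a subgroup.

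First identify $K_g$ for $g\in G(H)$. If $x\in K$ satisfies $\Delta(x)=g\otimes x$, then applying $\mathrm{id}\otimes\varepsilon$ gives $x=\varepsilon(x)g$, so $x\in\Bbbk g$. Conversely, if $g\in K$ then $g\in K_g$. Hence $K_g=\Bbbk g$ if $g\in G(K)$ and $K_g=0$ otherwise. In particular, the nonzero homogeneous elements of $K$ are exactly the nonzero scalar multiples of elements of $G(K)$.

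(iii)$\Rightarrow$(ii)/(i): assume $G(K)$ is a subgroup. Then $K\ni 1\neq 0$. For $g\in G(K)$ we have $g^{-1}\in G(K)\subset K$, so every nonzero element $cg$ of $K_g$ is invertible in $K$ with inverse $c^{-1}g^{-1}$. Thus condition (iii) of Proposition~\ref{eqrightsimplicity} holds, and $K$ is left and right $H$-simple.

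(i) or (ii)$\Rightarrow$(iii): Proposition~\ref{FAsubgroup}, applied to the right $H$-simple comodule algebra $K$, shows that $F(K)=\{g\mid K_g\neq 0\}$ is a subgroup. By the identification above, $F(K)=G(K)$. In the left $H$-simple case, first pass to right simplicity via the equivalence in Proposition~\ref{eqrightsimplicity}. Alternatively, argue directly: each $g\in G(K)$ is invertible in $K$, its inverse in $K$ is necessarily $g^{-1}=S(g)$, and $K$ is closed under products, so $G(K)$ is a subgroup.

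No step is a real obstacle. The only point needing care is the computation $K_g=K\cap\Bbbk g$, which uses the counit axiom, together with the observation that inverses in $K$ coincide with inverses in $H$.
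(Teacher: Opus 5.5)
Your proposal is correct and follows essentially the same route as the paper: use the counit to show $K_g=\Bbbk g$, so that invertibility of nonzero elements of $K_g$ in $K$ amounts to $g^{-1}\in G(K)$, and then apply Proposition~\ref{eqrightsimplicity}. Your use of Proposition~\ref{FAsubgroup} for (ii)$\Rightarrow$(iii) is only a minor variant of this argument.
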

\begin{proof}
Let $K$ be a left coideal subalgebra.
For $g\in G(H)$ and $0\neq a\in K_g$,
we have $a\otimes 1 =(\mathrm{id}\otimes \varepsilon) \circ \Delta(a) = g\otimes \varepsilon(a)$, 
and hence $a = \varepsilon(a)g$. Thus, $K_g=\Bbbk g$.
Therefore, for $g\in G(H)$ and $0\neq a\in K_g$,
$a$ is invertible in $K$ if and only if the inverse of $g$ is in $G(K)$.
Hence, the statement follows from Proposition~\ref{eqrightsimplicity}.
\end{proof}

The classical results are as follows:
For a pointed Hopf algebra $H$,
every relative $(H,K)$-Hopf module is free as a $K$-module when
\begin{itemize}
    \item $K$ is a Hopf subalgebra of $H$ \cite[Proposition~4]{T79},
    \item $K$ is a one-sided coideal subalgebra of $H$ such that $G(K)$ is a subgroup of $G(H)$ \cite[Proposition~1.4]{Ma91}.
\end{itemize}
We can recover the above assertions by Theorem~\ref{mainresult} and Proposition~\ref{coidealsimple}.
In particular, if $K$ is finite-dimensional, $H$ is free over $K$
since $G(K)$ is a finite submonoid of a group and it is automatically a subgroup of $G(H)$.

Note that for finite-dimensional one-sided coideal subalgebras, Skryabin proved freeness for weakly finite Hopf algebras without assuming pointedness \cite[Theorem~6.1]{Sk07}.

In our pointed case, we can also obtain freeness for finite-dimensional comodule subalgebras.
\begin{proposition}\label{fdsubofrightHsimple}
Let $H$ be a pointed Hopf algebra and let $B$ be a right $H$-simple $H$-comodule algebra.
Then every finite-dimensional $H$-comodule subalgebra of $B$ is right $H$-simple.
\end{proposition}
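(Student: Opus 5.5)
Let $C\subset B$ be a finite-dimensional $H$-comodule subalgebra. Since $H$ is pointed, Proposition~\ref{eqrightsimplicity} reduces right $H$-simplicity of $C$ to criterion (iii). So the plan is to show that $C\neq 0$ and that every $0\neq a\in C_g$, for $g\in G(H)$, is invertible in $C$.

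First, $C\neq 0$ because $1\in C$ and $B\neq 0$. Now fix $g\in G(H)$ and $0\neq a\in C_g$. Then $a\in B_g$, and $B$ is right $H$-simple, so by Proposition~\ref{eqrightsimplicity} (ii)$\Rightarrow$(iii) the element $a$ has an inverse $a^{-1}$ in $B$. It remains to see that $a^{-1}$ already lies in $C$.

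For this I would use the standard finite-dimensionality trick. Left multiplication $L_a\colon C\to C$, $c\mapsto ac$, is injective, since $ac=0$ implies $c=a^{-1}ac=0$ in $B$. Because $\dim C<\infty$, $L_a$ is also surjective, so there is $c\in C$ with $ac=1$. Multiplying on the left by $a^{-1}$ in $B$ gives $c=a^{-1}$, hence $a^{-1}\in C$. Thus $a$ is invertible in $C$, condition (iii) holds for $C$, and Proposition~\ref{eqrightsimplicity} (iii)$\Rightarrow$(ii), which applies since $H$ is pointed, shows that $C$ is right $H$-simple.

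None of these steps is a real obstacle. The only point needing care is that invertibility in $B$ descends to $C$, and finite-dimensionality handles exactly this. (Without it the statement fails, e.g.\ the polynomial subalgebra $\Bbbk[g]\subset \Bbbk\mathbb{Z}$.) It also matters that the $g$-component of $C$ is computed inside $C$ with the restricted coaction, so that $C_g=C\cap B_g$; this is immediate.
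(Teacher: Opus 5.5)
Your proof is correct and follows the paper's argument essentially verbatim. In both, Proposition~\ref{eqrightsimplicity} makes $0\neq a\in C_g$ invertible in $B$; left multiplication by $a$ on the finite-dimensional $C$ is then injective, hence surjective; and (iii)$\Rightarrow$(ii) finishes. The only cosmetic difference is that you identify the right inverse $c$ with $a^{-1}$ by multiplying on the left by $a^{-1}$, whereas the paper verifies $ca=1$ via $a^{-1}(ac)a$.
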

\begin{proof}
Let $A$ be a finite-dimensional $H$-comodule subalgebra of $B$.
For $g\in G(H)$ and $0\neq a\in A_g$, $a$ is invertible in $B$ by Proposition~\ref{eqrightsimplicity}.
So a linear map $A\to A ; \; x\mapsto ax$ is injective, and hence this is surjective since $A$ is finite-dimensional.
Thus, $1 = ax$ for some $x\in A$, and $1 = a^{-1}(ax)a = xa$.
Therefore, the inverse of $a$ is in $A$, which implies $A$ is right $H$-simple by Proposition~\ref{eqrightsimplicity}.
\end{proof}

\begin{corollary}\label{subcomodfree}
Let $H$ be a pointed Hopf algebra and let $B$ be a right $H$-simple $H$-comodule algebra.
For a finite-dimensional $H$-comodule subalgebra $A$ of $B$, $B$ is free as an $A$-module.
\end{corollary}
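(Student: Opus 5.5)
The plan is to combine Proposition~\ref{fdsubofrightHsimple} with Corollary~\ref{subfree}. Let $A$ be a finite-dimensional $H$-comodule subalgebra of $B$. Since $H$ is pointed and $B$ is right $H$-simple, Proposition~\ref{fdsubofrightHsimple} shows that $A$ is itself right $H$-simple.

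Next we observe that $A\subset B$ is an inclusion of $H$-comodule algebras. Then $B$ is an object of $\HM{H}{A}$: the coaction is $\rho_B$, and the right $A$-action is multiplication in $B$. The compatibility $\rho_B(ba)=\sw{b}{-1}\sw{a}{-1}\otimes \sw{b}{0}\sw{a}{0}$ holds because $\rho_B$ is an algebra map and restricts to $\rho_A$ on $A$.

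Corollary~\ref{subfree}, or equivalently Theorem~\ref{mainresult} applied to this object, now gives that $B$ is free as a right $A$-module.

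No step is a real obstacle. The only point needing care is that the right $H$-simplicity hypothesis of Theorem~\ref{mainresult} concerns $A$, not $B$, and Proposition~\ref{fdsubofrightHsimple} supplies exactly this. That proposition is where finite-dimensionality is used: left multiplication by a group-like homogeneous element of $A$ is injective on $A$, hence surjective, so that element is invertible inside $A$.
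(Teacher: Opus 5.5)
Your proposal is correct and is exactly the paper's argument: Proposition~\ref{fdsubofrightHsimple} makes $A$ right $H$-simple, and Corollary~\ref{subfree} then gives that $B$ is free over $A$. Your added checks, namely why $B\in\HM{H}{A}$ and where finite-dimensionality is used, are accurate.
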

\begin{proof}
When $A$ is finite-dimensional, $A$ is automatically right $H$-simple by Proposition~\ref{fdsubofrightHsimple}.
Thus, the statement holds by Corollary~\ref{subfree}.
\end{proof}

\section{A characterization of pointed Hopf algebras by freeness}\label{charapointedbyfreeness}
In this section, we show that the freeness in Theorem~\ref{mainresult} actually characterizes the class of pointed Hopf algebras.

Let $H$ be a Hopf algebra and let $0\neq V$ be a finite-dimensional $H$-comodule with basis $(v_i)_i$.
We write $\rho(v_i) = \sum_{j}h_{i j}\otimes v_j$ where $h_{ij} \in H$.
Then $V^*$ is an $H$-comodule whose coaction is given by $\rho(v_i^*) = \sum_j S(h_{ji}) \otimes v_j^*$ where $(v_i^*)_i$ is the dual basis of $(v_i)_i$.
Moreover,
$\ev\colon V\otimes V^* \to \Bbbk ;\; v\otimes f \mapsto f(v)$ and
$\coev \colon \Bbbk \to V^*\otimes V ;\; 1 \mapsto \sum_i v_i^* \otimes v_i$
are $H$-comodule maps.

Then $D := V^*\otimes V$ is an $H$-comodule algebra 
whose multiplication is given by
\[(V^*\otimes V)\otimes (V^*\otimes V) \overset{\mathrm{id}\otimes \ev \otimes \mathrm{id}}{\longrightarrow} V^*\otimes \Bbbk \otimes V \cong V^*\otimes V\]
and whose identity element is given by $\coev(1)$.

For $W\in {}^H\M$,
$V^*\otimes W$ is an object of ${}^H_D\M$ whose $D$-action is given by
\[D\otimes (V^*\otimes W) \overset{\mathrm{id}\otimes \ev \otimes \mathrm{id}}{\longrightarrow} V^*\otimes \Bbbk \otimes W \cong V^*\otimes W.\]
Similarly, $W \otimes V$ is an object of $\HM{H}{D}$ whose $D$-action is given by
\[(W\otimes V)\otimes D \overset{\mathrm{id}\otimes \ev \otimes \mathrm{id}}{\longrightarrow} W\otimes \Bbbk \otimes V \cong W\otimes V.\]
Note, in particular, that $V^* \cong V^*\otimes \Bbbk \in {}^H_D\M$ and $V\cong \Bbbk\otimes V \in \HM{H}{D}$.

Conversely, for $M \in {}^H_D\M$,
$V\otimes_D M$ is an object of ${}^H\M$ whose $H$-coaction is given by $\rho(v\otimes_D m)= \sw{v}{-1}\sw{m}{-1}\otimes \sw{v}{0}\otimes_D\sw{m}{0}$.
Similarly, for $M \in \HM{H}{D}$,
$M\otimes_D V^*$ is an object of ${}^H\M$ whose $H$-coaction is given by $\rho(m\otimes_D f)= \sw{m}{-1}\sw{f}{-1}\otimes \sw{m}{0}\otimes_D\sw{f}{0}$.

The following is a special case of Morita equivalence in monoidal categories (see \cite[Theorem~5.3]{P78}).
We include a direct proof to make the argument self-contained.
\begin{proposition}\label{HMMorita}
The following hold:
\begin{itemize}
\item The functor ${}^H\M \longrightarrow {}^H_D\M ;\; W \longmapsto V^*\otimes W$ is an equivalence of categories.
\item The functor ${}^H\M \longrightarrow \HM{H}{D} ;\; W \longmapsto W\otimes V$ is an equivalence of categories.
\end{itemize}
\end{proposition}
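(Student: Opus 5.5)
I will construct explicit quasi-inverse functors using the tensor products over $D$ introduced just before the statement, and I will treat only the first equivalence in detail. For $M\in{}^H_D\M$ the candidate inverse is $M\mapsto V\otimes_D M\in{}^H\M$. For the second equivalence the candidate inverse is $M\mapsto M\otimes_D V^*$, and the argument is the mirror image.

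The first step is to identify $V\otimes_D(V^*\otimes W)\cong W$ naturally in ${}^H\M$. There is a map $V\otimes_D(V^*\otimes W)\to W$ given by $v\otimes_D(f\otimes w)\mapsto f(v)w$, that is, $\ev\otimes\id$. It is $D$-balanced, which is immediate from the definition of the multiplication of $D$ through $\ev$. It is an $H$-comodule map because $\ev$ is one. Its inverse is $w\mapsto\sum_i v_i\otimes_D(v_i^*\otimes w)$. To check that these are mutually inverse, I use balancedness to move $v_i^*\otimes v$ into the $D$-factor:
\[ v\otimes_D(f\otimes w)=\sum_i v_i\otimes_D (v_i^*\otimes v)\cdot(f\otimes w)=\sum_i v_i\otimes_D f(v)\,(v_i^*\otimes w). \]
This uses $v=\sum_i v_i^*(v)v_i$ together with the right $D$-action on $V$.

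The second step is to identify $V^*\otimes(V\otimes_D M)\cong M$ in ${}^H_D\M$. Here I take $f\otimes(v\otimes_D m)\mapsto (f\otimes v)\cdot m$. This is well defined, since the action of $D$ on $M$ is associative and the relations defining $\otimes_D$ match the multiplication in $D$. It is a $D$-module map because left multiplication in $D=V^*\otimes V$ only affects the $V^*$-factor. It is an $H$-comodule map because $M$ is a relative Hopf module and the coaction on $D$ is the tensor product coaction. The inverse is $m\mapsto\sum_i v_i^*\otimes(v_i\otimes_D m)$. That the composite on $M$ is the identity follows from $\sum_i v_i^*\otimes v_i=\coev(1)=1_D$. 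For the other composite, I rewrite $\sum_i v_i^*\otimes(v_i\otimes_D (f\otimes v)m)$ by moving $(f\otimes v)$ across $\otimes_D$ via $v_i\cdot(f\otimes v)=f(v_i)v$, and then collapse $\sum_i f(v_i)v_i^*=f$.

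Naturality of both isomorphisms is clear from the formulas. I expect the only real obstacle to be bookkeeping: one must check that the coactions are compatible, especially that $\sum_i v_i\otimes v_i^*$, viewed inside the balanced tensor product, is coinvariant, or equivalently that the inverse maps are comodule maps. I plan to avoid a direct Sweedler computation by noting that the inverse of a bijective comodule map is automatically a comodule map. So it suffices to verify colinearity of the forward maps $\ev\otimes\id$ and the action map, and both of these are immediate.
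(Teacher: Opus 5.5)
\textbf{There is a genuine gap in your first step.} Your overall plan is the paper's: take $V\otimes_D(-)$ as the quasi-inverse, and prove $V\otimes_D(V^*\otimes W)\cong W$ and $V^*\otimes(V\otimes_D M)\cong M$. Your second step is correct. The first step, however, contains a real error.

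The map $w\mapsto \sum_i v_i\otimes_D(v_i^*\otimes w)$ is not inverse to $\ev\otimes\id$. Composing gives
\[
(\ev\otimes\id)\bigl(\textstyle\sum_i v_i\otimes_D(v_i^*\otimes w)\bigr)=\sum_i v_i^*(v_i)\,w=(\dim V)\,w .
\]
The mistake is the first equality in your display. With the right action $v\cdot(g\otimes u)=g(v)u$, you get $\sum_i v_i\cdot(v_i^*\otimes v)=\sum_i v_i^*(v_i)\,v=(\dim V)\,v$, not $v$. The expansion $v=\sum_i v_i^*(v)v_i$ does not yield the identity you need.

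In fact, balancedness forces $v_i\otimes_D(v_i^*\otimes w)=v_j\otimes_D(v_j^*\otimes w)$ for all $i,j$. So your element equals $(\dim V)\,v_j\otimes_D(v_j^*\otimes w)$, which is $0$ whenever $\operatorname{char}\Bbbk$ divides $\dim V$. Since $\Bbbk$ is arbitrary, this is not a harmless normalization issue. The proposition feeds, via Corollary~\ref{VdVsimple}, into Theorem~\ref{freencharpointed} for simple comodules of arbitrary dimension $>1$. Even in characteristic $0$, your two maps are not mutually inverse as you claim.

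\textbf{The fix is what the paper does.} Since $V\neq 0$, fix a single basis vector $v_j$ and define the inverse by $w\mapsto v_j\otimes_D(v_j^*\otimes w)$. The identity you need is $v=v_j\cdot(v_j^*\otimes v)$. It gives
\[
v\otimes_D(f\otimes w)=v_j\otimes_D(v_j^*\otimes v)\cdot(f\otimes w)=f(v)\,v_j\otimes_D(v_j^*\otimes w),
\]
so the two maps are genuinely mutually inverse. This is exactly where the hypothesis $V\neq0$ enters; your version never visibly used it.

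With that correction your argument goes through and matches the paper's proof. This includes your shortcut that the inverse of a bijective comodule map is automatically colinear.
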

\begin{proof}
We prove the first statement.
We show that ${}^H_D\M  \longrightarrow {}^H\M ;\; M \longmapsto V\otimes_D M$ is an inverse functor.

Consider an $H$-comodule map $\varphi\colon V \otimes_D V^* \to \Bbbk;\; v\otimes_D f \mapsto f(v)$.
This is well-defined because for $d\otimes w \in V^*\otimes V=D$,
$f(v\cdot(d\otimes w)) = f(d(v)w)= d(v)f(w)= ((d\otimes w)\cdot f)(v)$.
Moreover, $\varphi$ is an isomorphism.
Indeed, since $V\neq 0$, we can choose a basis vector $v_j$ of the basis of $V$, and hence the inverse map is given by $\psi \colon \Bbbk \to V\otimes_D V^* ;\; 1\mapsto v_j\otimes_D v_j^*$.
Note that $\psi \circ \varphi=\mathrm{id}$ holds since
$f(v)(v_j \otimes_D v_j^*) = (v\cdot (f\otimes v_j))\otimes_D v_j^*= v\otimes_D ((f\otimes v_j)\cdot  v_j^*) = v \otimes_D v_j^*(v_j)f = v \otimes_D f$.

Thus, for $W \in {}^H\M$,
\[V\otimes_D V^*\otimes W \overset{\varphi \otimes \mathrm{id}}{\longrightarrow} \Bbbk \otimes W \cong W\]
is an isomorphism. Naturality is immediate.

For $M\in {}^H_D\M$,
\[V^*\otimes V\otimes_D M = D\otimes_D M \cong M.\]
Thus, the first statement holds.
The second statement follows similarly.
\end{proof}

\begin{corollary}
For an $H$-comodule $W$, the following are equivalent:
\begin{itemize}
    \item [\textup{(i)}] $W$ is a simple $H$-comodule.
    \item [\textup{(ii)}] $V^*\otimes W$ is a simple object in ${}^H_D\M$.
    \item [\textup{(iii)}] $W\otimes V$ is a simple object in $\HM{H}{D}$.
\end{itemize}
\end{corollary}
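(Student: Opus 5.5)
The statement follows directly from Proposition~\ref{HMMorita}: an equivalence of abelian categories preserves and reflects simple objects. So I will argue that (i)$\Leftrightarrow$(ii) and (i)$\Leftrightarrow$(iii) by transporting subobjects along the equivalences $W\mapsto V^*\otimes W$ and $W\mapsto W\otimes V$.

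First I record the general fact I need. An equivalence $F\colon\mathscr{A}\to\mathscr{B}$ of abelian categories is exact. It induces, for each object $X$, an order-preserving bijection between subobjects of $X$ and subobjects of $F(X)$. It also preserves and reflects the zero object.

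Hence $X$ is simple, meaning nonzero with exactly two subobjects, if and only if $F(X)$ is simple. For instance, a subobject $N\hookrightarrow F(X)$ corresponds under the quasi-inverse $G$ to $G(N)\hookrightarrow GF(X)\cong X$. This correspondence is inverse to $M\mapsto F(M)$ up to isomorphism of subobjects, because the unit and counit are natural isomorphisms.

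Second, I make sure the categories involved are abelian with the expected subobjects. In ${}^H\M$, ${}^H_D\M$ and $\HM{H}{D}$, subobjects are the subcomodules, respectively the relative Hopf submodules. These are the notions used to define simplicity there, so the abstract statement about subobjects matches the concrete meaning of ``simple''.

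Applying the general fact to the first functor of Proposition~\ref{HMMorita} gives (i)$\Leftrightarrow$(ii). Applying it to the second functor gives (i)$\Leftrightarrow$(iii).

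The only point needing any care is that the functors in Proposition~\ref{HMMorita} are stated as equivalences of categories, not just additive functors. An equivalence between abelian categories is automatically additive and exact: it preserves all limits and colimits, in particular kernels and cokernels. So no further verification is needed, and the main step is simply invoking Proposition~\ref{HMMorita}.

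One could alternatively argue directly that $V^*\otimes(-)$ sends a subcomodule $W'\subset W$ to $V^*\otimes W'\subset V^*\otimes W$, and that every relative Hopf submodule arises this way via $V\otimes_D(-)$. The abstract argument above already covers this.
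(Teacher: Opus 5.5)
Your proposal is correct and takes the same approach as the paper: the paper simply observes that an equivalence of categories preserves simplicity and applies Proposition~\ref{HMMorita} to both functors. Your remarks on transporting subobjects along the equivalence and its quasi-inverse spell out that one-line argument in more detail.
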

\begin{proof}
Since an equivalence of categories preserves simplicity, the statement holds by Proposition~\ref{HMMorita}.
\end{proof}

In particular, taking $W=V$ and $W=V^*$, we have the following corollary.
\begin{corollary}\label{VdVsimple}
For a finite-dimensional $H$-comodule $V$, the following hold:
\begin{itemize}
    \item If $V$ is a simple $H$-comodule, then $V^*\otimes V$ is left $H$-simple.
    \item If $V^*$ is a simple $H$-comodule, then $V^*\otimes V$ is right $H$-simple.
\end{itemize}
\end{corollary}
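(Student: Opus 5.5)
The statement follows from the preceding corollary by choosing the comodule $W$ appropriately, and by recalling that $D=V^*\otimes V$ viewed as an object of ${}^H_D\M$ (resp. $\HM{H}{D}$) is just $D$ with its regular left (resp. right) action. I will treat the two bullets in turn, using the characterization that $A$ is left (resp. right) $H$-simple if and only if $A$ is a simple object of ${}^H_A\M$ (resp. $\HM{H}{A}$).

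For the first bullet, take $W=V$ in the corollary. Then $V^*\otimes W=V^*\otimes V=D$, and the $D$-action on $V^*\otimes W$ is
\[D\otimes(V^*\otimes V)\xrightarrow{\mathrm{id}\otimes\ev\otimes\mathrm{id}}V^*\otimes V,\]
which is exactly the multiplication of $D$. Its $H$-coaction is the tensor product coaction, which is the given coaction on $D$. Thus $V^*\otimes V$, as an object of ${}^H_D\M$, is the regular left module $D$. By the corollary ((i)$\Rightarrow$(ii)), simplicity of $V$ gives that $D$ is simple in ${}^H_D\M$, that is, $D$ has no nontrivial $H$-comodule left ideals. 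It is also nonzero, since $V\neq 0$. Hence $D$ is left $H$-simple.

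For the second bullet, take $W=V^*$ in the corollary. Then $W\otimes V=V^*\otimes V=D$, and the right $D$-action
\[(V^*\otimes V)\otimes D\xrightarrow{\mathrm{id}\otimes\ev\otimes\mathrm{id}}V^*\otimes V\]
is again the multiplication of $D$. So $D$ regarded as an object of $\HM{H}{D}$ is the regular right module. By (i)$\Rightarrow$(iii), simplicity of $V^*$ gives that $D$ is simple in $\HM{H}{D}$, hence right $H$-simple.

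The only point that needs checking is that these identifications of actions and coactions agree on the nose with the algebra structure on $D$. This is immediate from the definitions, so I expect no real obstacle.
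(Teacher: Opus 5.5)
Your proposal is correct and matches the paper's own argument: the paper derives this corollary directly from the preceding one by taking $W=V$ and $W=V^*$, exactly as you do. Your explicit check that $V^*\otimes V$ with the induced action and coaction is the regular left (resp.\ right) module over $D$ is the identification the paper leaves implicit.
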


\begin{lemma}\label{exisimplesimple}
If $H$ has a simple $H$-comodule whose dimension is greater than one,
then there exists a simple $H$-comodule $V$ with $\dim V>1$ such that $V^*$ is a simple $H$-comodule.
\end{lemma}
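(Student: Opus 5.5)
Let $W$ be a simple $H$-comodule with $\dim W>1$. We look for $V$ among $W$ and the comodules obtained from $W$ by dualizing. The main issue is that the dual of a simple comodule need not be simple, because the antipode may fail to be bijective. The coaction on $W^*$ is given by the matrix $(S(h_{ji}))$, so $W^*$ is a comodule over the subcoalgebra $S(C)$. Here $C$ is the simple subcoalgebra spanned by the coefficients $h_{ij}$ of $W$ (\cite[Theorem~3.2.11(d)]{Radtext}).

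\emph{Step 1.} Since $S$ is an anti-coalgebra map, $S(C)$ is a homomorphic image of the coalgebra $C^{\mathrm{cop}}$. The coalgebra $C^{\mathrm{cop}}$ is simple, and a nonzero quotient of a simple coalgebra is isomorphic to it: the kernel is a coideal, and a coideal of a simple (finite-dimensional, cosemisimple matrix-type) coalgebra is dual to an ideal of a simple algebra, hence zero. So $S(C)\cong C^{\mathrm{cop}}$ is a simple subcoalgebra of $H$ and $S|_C$ is injective.

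\emph{Step 2.} The comodule $W^*$ is a $\dim W$-dimensional $S(C)$-comodule. Its coefficient space is all of $S(C)$, since the matrix entries $S(h_{ji})$ span $S(C)$. A finite-dimensional comodule over a simple coalgebra $D$ whose coefficients span $D$ need not be simple; it is a direct sum of copies of the unique simple $D$-comodule. Set $n=\dim W$. Then $\dim C=n^2$, so $\dim S(C)=n^2$, and the unique simple $S(C)$-comodule has dimension $n$. Hence $W^*$, having dimension $n$, is simple.

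\emph{Step 3.} Take $V:=W^{*}$. Then $\dim V=n>1$ and $V$ is simple by Step 2. We need $V^*$ to be simple. The canonical identification $V^{*}=W^{**}\cong W$ holds as vector spaces. The comodule structure on $W^{**}$ has coefficient matrix $(S^2(h_{ij}))$, which generally differs from that of $W$. Applying Steps 1 and 2 to the simple comodule $V=W^*$ in place of $W$ shows that $V^*$ is simple of dimension $n$. So $V=W^*$ works.

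The main obstacle is the claim in Step 1 that $S|_C$ is injective, i.e.\ that the image of a simple coalgebra under a coalgebra (anti)morphism is again simple of the same dimension. This follows from the fact that simple coalgebras are duals of simple algebras, which have no nontrivial ideals. One could worry about non-split simple coalgebras over non-algebraically-closed $\Bbbk$. In that case Step 2 should be phrased as: every simple $S(C)$-comodule has the same dimension $d$ with $\dim S(C)=\dim C$, and $W^*$ and $W$ have matching multiplicities. To avoid this dimension count, I would instead choose $V$ carefully. If $W^*$ is not simple, take a simple subcomodule $V'$ of $W^*$; its coefficients lie in $S(C)$ and its dimension is at least $n>1$ by the structure of $S(C)$-comodules. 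Then argue as above using $V'$.
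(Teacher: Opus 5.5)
\textbf{There is a genuine gap: Step 1 rests on a false lemma.} A nonzero quotient of a simple coalgebra need not be isomorphic to it. The counit $\varepsilon\colon C\to\Bbbk$ is already a surjective coalgebra map for every $C$.

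The correspondence you invoke is the wrong one. A quotient coalgebra $C/I$ corresponds to the \emph{subalgebra} $I^{\perp}=(C/I)^*$ of $C^*$, not to an ideal; it is subcoalgebras that correspond to ideals. For $C\cong M^c(n)$, every proper unital subalgebra of $M_n(\Bbbk)$ (the scalars, the upper triangular matrices, \dots) gives a proper quotient of $C$. The upper triangular one is not even cosemisimple.

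So nothing in your argument shows that $S|_C$ is injective or that $S(C)\cong C^{\mathrm{cop}}$. Steps 2 and 3 inherit this. Over an algebraically closed field, injectivity of $S|_C$ is equivalent to simplicity of $W^*$. When $S$ is not bijective, that is precisely the point requiring proof. In general you only get the converse: transposing a proper quotient $V\to Q$ gives a proper subcomodule $Q^*\subset V^*$, so $V^*$ simple implies $V$ simple. If Steps 1--2 were right, the dual of every simple comodule would be simple and you could just take $V=W$; the detour through $W^*$ in Step 3 signals that too much is being claimed. Your fallback does not repair the problem. It again uses the asserted structure of $S(C)$-comodules, and ``argue as above with $V'$'' gives no reason that $V'^*$ is simple or that the process stops.

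\textbf{The missing idea is to choose $V$ by a minimality argument rather than to prove that duals of simples are simple.} The paper proceeds as follows.

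\begin{itemize}
\item Take $V$ simple of minimal dimension among simple comodules of dimension $>1$.
\item Suppose $V^*$ is not simple. A proper simple subcomodule of $V^*$ has dimension $<\dim V$, so by minimality it has dimension $1$. Hence it is $\Bbbk f$ with $\rho(f)=g\otimes f$ for some group-like $g$.
\item Since $\ev\colon V\otimes V^*\to\Bbbk$ is a comodule map and $g$ is invertible, the map $V\to\Bbbk^{[g^{-1}]}$, $v\mapsto f(v)$, is a nonzero comodule map.
\item Simplicity of $V$ makes this map injective, contradicting $\dim V>1$.
\end{itemize}

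The key point is that one-dimensional comodules are invertible objects. This is exactly what lets you convert a subobject of $V^*$ into a quotient of $V$, and minimality ensures it is the only case you need to handle.
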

\begin{proof}
Let $V$ be a simple $H$-comodule which has minimum dimension among those of dimension greater than $1$.
Assume $V^*$ is not simple.
Then $V^*$ has a proper simple $H$-subcomodule $W$.
Since $V$ has minimum dimension, $\dim W=1$.
Thus, we can write $W = \Bbbk f$ where $0\neq f\in V^*$, $\rho(f)=g\otimes f$ and $g\in G(H)$.
Consider a linear map $\varphi \colon V\to \Bbbk^{[g^{-1}]};\; v\mapsto \ev(v\otimes f)$, where $\Bbbk^{[g^{-1}]}$ is defined in Section~\ref{freenesssemisimplicity}.
We show $\varphi$ is an $H$-comodule map.
Let $v\in V$.
Since $\ev\colon V\otimes V^*\to \Bbbk$ is an $H$-comodule map,
$\sw{\ev(v\otimes f)}{-1}\otimes \sw{\ev(v\otimes f)}{0} = \sw{v}{-1}\sw{f}{-1}\otimes \ev(\sw{v}{0}\otimes \sw{f}{0})
= \sw{v}{-1}g\otimes \ev(\sw{v}{0}\otimes f)$.
Hence, $\sw{\ev(v\otimes f)}{-1}g^{-1}\otimes \sw{\ev(v\otimes f)}{0}
= \sw{v}{-1}\otimes \ev(\sw{v}{0}\otimes f)$, which implies $\varphi$ is an $H$-comodule map.
Since $V$ is simple and $\varphi\neq 0$, $\Ker \varphi = 0$, and hence $V\subset \Bbbk^{[g^{-1}]}$.
This contradicts $\dim V>1$.
Therefore, $V^*$ is simple.
\end{proof}

\begin{theorem}\label{freencharpointed}
Let $H$ be a Hopf algebra. The following are equivalent:
\begin{itemize}
\item [\textup{(i)}] $H$ is pointed.
\item [\textup{(ii)}] For any left $H$-simple $H$-comodule algebra $A$, every object of ${}^H_A\M$ is free as an $A$-module.
\item [\textup{(iii)}] For any right $H$-simple $H$-comodule algebra $A$, every object of $\HM{H}{A}$ is free as an $A$-module.
\end{itemize}
\end{theorem}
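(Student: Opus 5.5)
The implications (i)$\Rightarrow$(iii) and (i)$\Rightarrow$(ii) are already available. The first is exactly Theorem~\ref{mainresult}. The second is its left-handed version for ${}^H_A\M$ with $A^{[g]}$, noted in Remark~\ref{mainresultrem}; alternatively, combine Proposition~\ref{eqrightsimplicity}, which makes left and right $H$-simplicity coincide when $H$ is pointed, with the mirrored filtration argument. So the real content is the two converses, which I would prove by contraposition: assuming $H$ is not pointed, I will construct a one-sided $H$-simple comodule algebra and a relative Hopf module over it that is not free.

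\textbf{Step 1: a simple comodule of dimension greater than one.} If $H$ is not pointed, then $H$ has a simple subcoalgebra $C$ with $\dim C>1$. Its dual $C^*$ is a finite-dimensional simple algebra, hence $C^*\cong M_n(\Delta)$ for a division algebra $\Delta$. Its simple module, equivalently the simple $C$-comodule $V$, has dimension $n\dim\Delta$. If this dimension were $1$, then $C^*\cong\Bbbk$ and $\dim C=1$, a contradiction. So $H$ has a simple comodule of dimension $>1$. By Lemma~\ref{exisimplesimple}, we may then choose a simple $H$-comodule $V$ with $n:=\dim V>1$ such that $V^*$ is also simple.

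\textbf{Step 2: the counterexample.} Put $D:=V^*\otimes V$, so $\dim D=n^2$. By Corollary~\ref{VdVsimple}, $D$ is both left $H$-simple (since $V$ is simple) and right $H$-simple (since $V^*$ is simple). As noted before Proposition~\ref{HMMorita}, $V\cong\Bbbk\otimes V$ is a nonzero object of $\HM{H}{D}$, and $V^*\cong V^*\otimes\Bbbk$ is a nonzero object of ${}^H_D\M$. Both have dimension $n$. A nonzero free $D$-module has dimension at least $\dim D=n^2>n$. Hence neither $V$ nor $V^*$ is free over $D$, which refutes (iii) and (ii) respectively.

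The main obstacle is Step 1: we need a \emph{single} $V$ making $D$ simple on the correct side. This is exactly why Lemma~\ref{exisimplesimple} is needed for (iii), since Corollary~\ref{VdVsimple} requires $V^*$ to be simple for right $H$-simplicity. For (ii), simplicity of $V$ alone suffices. The remaining point is the small check that non-pointedness really yields a simple comodule of dimension $>1$, handled above via the structure of the finite-dimensional simple algebra $C^*$.
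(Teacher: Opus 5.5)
Your proposal is correct and follows the paper's proof step for step. You use Lemma~\ref{exisimplesimple} to get a simple $V$ with $\dim V>1$ and $V^*$ simple, then Corollary~\ref{VdVsimple} to make $D=V^*\otimes V$ both left and right $H$-simple, then a dimension count on $V\in\HM{H}{D}$ and $V^*\in{}^H_D\M$; the paper phrases this last step as $(\dim V)^2\mid\dim V$ and you as $\dim D>\dim V$. Your Step~1, which gets a simple comodule of dimension greater than one from a simple subcoalgebra $C$ with $\dim C>1$ via $C^*\cong M_n(\Delta)$, just makes explicit a point the paper leaves implicit.
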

\begin{proof}
The implications (i)$\Rightarrow$(ii) and (i)$\Rightarrow$(iii) follow from Theorem~\ref{mainresult} (see also Remark~\ref{mainresultrem}).

Assume $H$ is not pointed. Then, there exists a simple $H$-comodule $V$ with $\dim V >1$ such that $V^*$ is simple by Lemma~\ref{exisimplesimple}.
By Corollary~\ref{VdVsimple}, $D:=V^*\otimes V$ is left $H$-simple and right $H$-simple.
Moreover, $V^* \in {}^H_D\M$ and $V \in \HM{H}{D}$.
So if (ii) or (iii) holds, then $(\dim V)^2 = \dim D\mid \dim V$, which contradicts $\dim V>1$.
Therefore, each of (ii) and (iii) implies $H$ is pointed.
\end{proof}

\section{Right $H$-simple $H$-comodule algebras over a pointed Hopf algebra}\label{rightHsimplecomodalg}

\subsection{Right $\Bbbk G$-simple $\Bbbk G$-comodule algebras}
We begin with the group algebra case.
The following result follows from the argument in \cite[Section~4]{BZS8} (see also \cite{EO4} for a categorical counterpart).
We record it here in the language of $H$-comodule algebras and include a proof for later use.

\begin{proposition}\label{rightKGsimple}
Let $G$ be a group and let $A$ be a $\Bbbk G$-comodule algebra. Then the following are equivalent:
\begin{itemize}
    \item [\textup{(i)}] $A$ is right $\Bbbk G$-simple and $A^{\mathrm{co}\Bbbk G}=\Bbbk$.
    \item [\textup{(ii)}] $A={}_\psi \Bbbk F$ for some subgroup $F$ of $G$ and some $2$-cocycle $\psi$ of $F$.
\end{itemize}
\end{proposition}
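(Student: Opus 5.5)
A $\Bbbk G$-comodule algebra is the same thing as a $G$-graded algebra $A=\bigoplus_{g\in G}A_g$ with $A_gA_h\subset A_{gh}$, where $A_g$ is exactly the space defined before Proposition~\ref{annmor}. Since $\Bbbk G$ is pointed with $\corad{\Bbbk G}=\Bbbk G$, Proposition~\ref{eqrightsimplicity} applies directly. Right $\Bbbk G$-simplicity is therefore equivalent to $A\neq 0$ together with invertibility of every nonzero homogeneous element. The coinvariant space is simply $A^{\mathrm{co}\Bbbk G}=A_e$. I will use these two translations throughout.

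For (i)$\Rightarrow$(ii), let $F:=F(A)$, which is a subgroup of $G$ by Proposition~\ref{FAsubgroup}. The main step is to show $\dim A_g=1$ for every $g\in F$. Pick $0\neq u_g\in A_g$; it is invertible by Proposition~\ref{eqrightsimplicity}, and the computation in the proof of Proposition~\ref{FAsubgroup} shows $u_g^{-1}\in A_{g^{-1}}$. Then left multiplication by $u_g^{-1}$ maps $A_g$ injectively into $A_e=\Bbbk$, and left multiplication by $u_g$ maps it back, so $A_g=\Bbbk u_g$. Take $u_e=1$. For $g,h\in F$, the product $u_gu_h$ is a nonzero element of $A_{gh}$, being a product of invertible elements, so $u_gu_h=\psi(g,h)u_{gh}$ for a unique $\psi(g,h)\in\Bbbk^\times$. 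Associativity of $A$ gives the $2$-cocycle identity, and $u_e=1$ gives the normalization $\psi(g,e)=1=\psi(e,g)$. Hence $u_g\mapsto g$ identifies $A$ with ${}_\psi\Bbbk F$ as $\Bbbk G$-comodule algebras.

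For (ii)$\Rightarrow$(i), in ${}_\psi\Bbbk F$ the homogeneous components are $(\,{}_\psi\Bbbk F)_g=\Bbbk g$ for $g\in F$ and $0$ otherwise. In particular the coinvariants are $\Bbbk e=\Bbbk$. Each $g\in F$ is invertible, with inverse $\psi(g,g^{-1})^{-1}g^{-1}$; checking this uses the cocycle identity with $(g,g^{-1},g)$, which gives $\psi(g,g^{-1})=\psi(g^{-1},g)$, so the element is a two-sided inverse. Proposition~\ref{eqrightsimplicity} then yields right $\Bbbk G$-simplicity.

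The only nontrivial point is the one-dimensionality of the components $A_g$, and it reduces to the invertibility supplied by Proposition~\ref{eqrightsimplicity}. Everything else is bookkeeping.
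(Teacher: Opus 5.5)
Your proposal is correct and takes essentially the same approach as the paper. You use Proposition~\ref{eqrightsimplicity} to reduce right $\Bbbk G$-simplicity to invertibility of nonzero homogeneous elements, obtain $\dim A_g=1$ for $g\in F(A)$ from that invertibility, and read off the normalized $2$-cocycle from associativity and the unit; conversely you exhibit explicit inverses of the basis elements of ${}_\psi\Bbbk F$. Your check that $\psi(g,g^{-1})=\psi(g^{-1},g)$, which makes the stated inverse two-sided, is a small detail the paper leaves implicit.
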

\begin{proof}
First, we show the implication (i)$\Rightarrow$(ii).
Since $A$ is a $\Bbbk G$-comodule, we have $A=\bigoplus_{g\in G} A_g = \bigoplus_{g\in F(A)} A_g$,
where $F(A)$ is defined in Proposition~\ref{FAsubgroup}.
For $g\in F(A)$, $A_e\to A_g; x\mapsto ax$ is a linear isomorphism for some $0\neq a\in A_g$ since $a$ is invertible by Proposition~\ref{eqrightsimplicity}.
Therefore, since $A_e = A^{\mathrm{co}\Bbbk G}=\Bbbk$, we have $A_g \cong \Bbbk$.
Thus we can write $A= \bigoplus_{g\in F(A)}\Bbbk a_g$ with $\rho(a_g)=g\otimes a_g$, $a_g\neq 0$ and $a_e=1$.
Moreover, $F(A)$ is a subgroup of $G$ by Proposition~\ref{FAsubgroup}.
Since $\rho(a_ga_h)=\rho(a_g)\rho(a_h)=gh\otimes a_ga_h$, we have $a_ga_h\in A_{gh}=\Bbbk a_{gh}$.
Therefore, there exists a map $\psi \colon F(A)\times F(A) \to \Bbbk^\times$ such that $a_ga_h = \psi(g,h)a_{gh}$.
By associativity, $(a_ga_h)a_k = a_g(a_ha_k)$, and hence $\psi(g,h)\psi(gh,k)a_{ghk} = \psi(g,hk)\psi(h,k)a_{ghk}$.
Since $a_ea_g = a_g = a_ga_e$, we have $\psi(e,g)a_g = a_g = \psi(g,e)a_g$.
Therefore, $\psi$ is a $2$-cocycle of $F(A)$, and hence $A = {}_\psi \Bbbk (F(A))$.

Next, we show the implication (ii)$\Rightarrow$(i).
Let $g\in G$ and $0\neq a=\sum_{f\in F} r_f f \in ({}_\psi\Bbbk F)_g$.
Then we have $ \sum_{f\in F} r_f f\otimes f = \rho(a) = g\otimes a$.
Therefore, if $g\in F$, then $a=r_g g$, while if $g\notin F$, then $a=0$.
Since $a\neq 0$, we have $g\in F$, $a=r_g g$ and hence $a$ is invertible in ${}_\psi\Bbbk F$ with inverse $r_g^{-1}\psi(g,g^{-1})^{-1}g^{-1}$.
Therefore, by Proposition~\ref{eqrightsimplicity}, ${}_\psi\Bbbk F$ is right $\Bbbk G$-simple.
In addition, considering $g=e$, we have $a=r_ee$, and hence $({}_\psi \Bbbk F)^{\mathrm{co}\Bbbk G}=\Bbbk$.
\end{proof}

The assumption of trivial coinvariants in Proposition~\ref{rightKGsimple} is automatic in the finite-dimensional setting over an algebraically closed field.
In fact, this follows from the following elementary observation.

\begin{proposition}\label{ACFcoin}
Assume the base field $\Bbbk$ is algebraically closed.
Let $H$ be a Hopf algebra and let $A$ be an $H$-comodule algebra.
Suppose that $A^{\mathrm{co}H}$ is finite-dimensional and that $A$ is right $H$-simple.
Then $A^{\mathrm{co}H} = \Bbbk$.
\end{proposition}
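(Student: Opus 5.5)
The plan is to show that $D := A^{\mathrm{co}H}$ is a division algebra and then use that $\Bbbk$ is algebraically closed. By Remark~\ref{embedMgDA} and Proposition~\ref{schurHM} (with $g=h=e$), $D \cong \HM{H}{A}(A,A)$. By Proposition~\ref{subAAgsame}, right $H$-simplicity of $A$ means $A$ is a simple object of $\HM{H}{A}$. Schur's lemma then says every nonzero endomorphism of $A$ in $\HM{H}{A}$ is an isomorphism. Hence $D$ is a division algebra over $\Bbbk$.

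This can also be seen directly, without pointedness, in the spirit of Proposition~\ref{eqrightsimplicity}. Take $0 \neq d \in D = A_e$. Left multiplication $\twg{A}{e} = A \to A$, $b \mapsto db$, is a morphism in $\HM{H}{A}$ by Proposition~\ref{annmor}. Its kernel and image are subobjects of the simple object $A$, and the map is nonzero since it sends $1 \mapsto d$. So it is bijective, and the argument of Proposition~\ref{eqrightsimplicity} shows $d$ is invertible in $A$. Finally, $d^{-1} \in D$, because $\rho(d^{-1}) = \rho(d)^{-1} = (1 \otimes d)^{-1} = 1 \otimes d^{-1}$, using that $\rho$ is an algebra map.

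It remains to observe that a finite-dimensional division algebra $D$ over an algebraically closed field equals $\Bbbk$. For $d \in D$, the subalgebra $\Bbbk[d]$ is a finite-dimensional commutative domain, hence a field. It is a finite extension of $\Bbbk$, so it equals $\Bbbk$. Concretely, $d$ has a minimal polynomial, which splits into linear factors; since $D$ has no zero divisors, one linear factor $d - \lambda$ vanishes. Thus $D = \Bbbk$.

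There is no real obstacle here. The only point needing care is that the inverse of a coinvariant element is again coinvariant, which is exactly the algebra-map property of $\rho$ used above.
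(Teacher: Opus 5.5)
Your proof is correct and follows essentially the paper's route. Both arguments use two facts: nonzero coinvariants are invertible in $A$ by the (ii)$\Rightarrow$(iii) argument of Proposition~\ref{eqrightsimplicity}, which needs no pointedness, and algebraic closedness then forces every coinvariant to be a scalar. The only difference is in the last step: the paper takes an eigenvector $a$ of left multiplication by $x$ on $A^{\mathrm{co}H}$ and cancels the invertible $a$ in $xa=\lambda a$, so it never needs your extra check that $d^{-1}$ is again coinvariant.
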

\begin{proof}
Let $x \in A^{\mathrm{co}H}$.
The linear map $A^{\mathrm{co}H} \to A^{\mathrm{co}H};\; a\mapsto xa$ has an eigenvalue $\lambda\in\Bbbk$
since $\Bbbk$ is algebraically closed and $A^{\mathrm{co}H}$ is a nonzero finite-dimensional vector space.
Thus, $xa = \lambda a$ for some $0 \neq a\in A^{\mathrm{co}H}$.
By Proposition~\ref{eqrightsimplicity}, $a$ is invertible,
and hence $x = \lambda \in \Bbbk$.
\end{proof}

\subsection{Biproduct constructions}
We recall the construction of the Radford biproduct (see, for example \cite[Theorem~11.6.7, 11.6.9]{Radtext}). 
Let $H$ be a Hopf algebra with antipode $S_H$. We use the Sweedler notation $\Delta_H(h)=h_{(1)}\otimes h_{(2)}$ for the comultiplication of $H$
and $\rho_M(m) = \sw{m}{-1}\otimes \sw{m}{0}$
for the $H$-coaction on $M\in{}^{H}_H\YD$.
Let $B$ be a Hopf algebra object in ${}^H_H\YD$.
We use the Sweedler notation $\Delta_B(b)=\swy{b}{1}\otimes \swy{b}{2}$  for the comultiplication of $B$.
Then the vector space $B\otimes H$ has the following Hopf algebra structure:
\begin{itemize}
    \item (multiplication) \quad $(b\otimes h)(c\otimes g) = b(\sw{h}{1}\cdot c)\otimes \sw{h}{2} g$.
    \item (comultiplication) \quad $\Delta(b\otimes h) = (\swy{b}{1}\otimes \sw{\swy{b}{2}}{-1}\sw{h}{1}) \otimes (\sw{\swy{b}{2}}{0}\otimes \sw{h}{2})$.
\end{itemize}
We denote the Hopf algebra by $B\#H$, which is called the Radford biproduct.

The biproduct construction for a comodule algebra object in ${}^H_H\mathcal{YD}$ 
has also been established in \cite[Proposition~7.2 (1)]{M10}.
We recall it here for later use.
\begin{proposition}\label{Momconstruction}
Let $A$ be a $B$-comodule algebra object in ${}^H_H\YD$ and let $F$ be an $H$-comodule algebra.
Then the vector space $A\otimes F$ has the following $B\#H$-comodule algebra structure:
\begin{itemize}
    \item (multiplication) \quad $(a\otimes f)(b\otimes g) = a (\sw{f}{-1}\cdot b) \otimes \sw{f}{0}g$.
    \item ($B\#H$-coaction) \quad $\rho(a\otimes f) = \swy{a}{-1} \# \sw{\swy{a}{0}}{-1}\sw{f}{-1}  \otimes \sw{\swy{a}{0}}{0}\otimes \sw{f}{0}$.
\end{itemize}
Here we use the Sweedler notation $\delta_A(a)=\swy{a}{-1}\otimes \swy{a}{0}$ for the $B$-coaction on $A$
and $\rho_F(f)=\sw{f}{-1}\otimes \sw{f}{0}$
for the $H$-coaction on $F$. 
\end{proposition}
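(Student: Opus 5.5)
The plan is a direct verification of the axioms: associativity and unit of the multiplication on $A\otimes F$, coassociativity and counitality of $\rho$, and multiplicativity and unitality of $\rho$. Throughout we use that $A$ is an algebra in ${}^H_H\YD$ (so $h\cdot(ab)=(\sw{h}{1}\cdot a)(\sw{h}{2}\cdot b)$, $h\cdot 1=\varepsilon(h)1$, and the $H$-coaction on $A$ is multiplicative), that $\delta_A\colon A\to B\otimes A$ is a morphism in ${}^H_H\YD$ which is an algebra map into the braided tensor product $B\underline{\otimes} A$, and that $F$ is an $H$-comodule algebra.

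\textbf{The algebra $A\otimes F$.} The multiplication is the smash-product-type algebra $A\rtimes F$ built from the left $H$-module algebra $A$ and the $H$-comodule algebra $F$. The unit is $1\otimes 1$, which follows from $h\cdot 1=\varepsilon(h)1$ and $\rho_F(1)=1\otimes1$. For associativity, expand $((a\otimes f)(b\otimes g))(c\otimes k)$ and $(a\otimes f)((b\otimes g)(c\otimes k))$. Comparing them reduces to two facts:
\begin{itemize}
\item coassociativity of $\rho_F$ together with multiplicativity of $\rho_F$ (so that $\sw{(fg)}{-1}=\sw{f}{-1}\sw{g}{-1}$);
\item the module-algebra identity $h\cdot(b(k\cdot c))=(\sw{h}{1}\cdot b)(\sw{h}{2}k\cdot c)$.
\end{itemize}
This step is routine.

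\textbf{Coaction axioms.} Conceptually, $\rho$ is the composite of $\delta_A\otimes\rho_F$ followed by the coaction on $A$ and multiplication in $H$. Coassociativity can be checked against the comultiplication of $B\#H$. In the expansion, the terms $\sw{\swy{b}{2}}{-1}$ are matched using coassociativity of $\delta_A$ and the fact that $\delta_A$ is $H$-colinear, i.e.\ $\sw{a}{-1}\otimes\delta(\sw{a}{0})=\sw{(\swy{a}{-1})}{-1}\sw{(\swy{a}{0})}{-1}\otimes \sw{(\swy{a}{-1})}{0}\otimes\sw{(\swy{a}{0})}{0}$. Counitality follows from $\varepsilon_B$ applied to $\delta_A$ and from $\varepsilon_H$.

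\textbf{Multiplicativity of $\rho$ (main obstacle).} This is the step I expect to be hardest. My plan is to avoid a raw Sweedler computation and instead argue structurally, factoring $\rho$ through intermediate algebras:
\begin{itemize}
\item $\delta_A\otimes \mathrm{id}\colon A\rtimes F\to (B\underline{\otimes}A)\rtimes F$ is an algebra map, using that $\delta_A$ is $H$-linear and multiplicative into the braided tensor product;
\item the inner $H$-coaction gives an algebra map $A\rtimes F\to H\otimes (A\rtimes F)$, which is the standard fact that the smash product of a YD-module algebra with a comodule algebra is an $H$-comodule algebra via the diagonal coaction;
\item the resulting target is identified with $(B\#H)\otimes(A\otimes F)$ with its product algebra structure.
\end{itemize}
The key computation is $\rho((a\otimes f)(b\otimes g))$. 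Here one uses $\delta_A(\sw{f}{-1}\cdot b)=\sw{f}{-1}\cdot\delta_A(b)$ (via the diagonal action on $B\otimes A$). The braiding in $B\underline{\otimes}A$ then produces exactly the term $\sw{a}{-1}\cdot$ appearing in the multiplication of $B\#H$. Finally, the YD compatibility condition is used to move $\sw{f}{-1}$ past the $H$-coaction of $\sw{b}{0}$, which matches the factor $\sw{h}{1}\cdot c$ in $(b\otimes h)(c\otimes g)$ for $B\#H$. Unitality of $\rho$ is immediate.

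Alternatively, since the proposition is attributed to \cite[Proposition~7.2 (1)]{M10}, one could simply cite it. But the plan above, namely a verification organized through the YD condition and the braided tensor product, is how I would write a self-contained proof.
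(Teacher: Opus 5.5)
The paper gives no proof of this proposition. It is quoted from \cite[Proposition~7.2 (1)]{M10} and used only in the case $A=B=\coinv{R}$, via Proposition~\ref{RcFmultcoac}. Your direct verification is therefore a genuinely different, self-contained route, and it is correct. The axioms you invoke are exactly the ones needed:
\begin{itemize}
\item for associativity: the module-algebra property of $A$, together with multiplicativity and coassociativity of $\rho_F$;
\item for coassociativity of $\rho$: $B$-coassociativity and $H$-colinearity of $\delta_A$;
\item for multiplicativity of $\rho$: $H$-linearity of $\delta_A$, its multiplicativity into $B\underline{\otimes}A$, multiplicativity of the $H$-coaction on $A$, and the YD condition.
\end{itemize}
Compared with the citation, your argument costs some length but shows where each Yetter--Drinfeld axiom enters, and it covers arbitrary $B$-comodule algebras $A$. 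The more conceptual argument available from Section~\ref{adjforcomalg} handles only the special case the paper actually needs. That argument transports the structure from the comodule algebra $R\square_H F$ along the bijection $\psi$, which is checked there to be unital, multiplicative and colinear.

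Two points deserve tightening.

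First, in your structural route the word ``identified'' hides the lemma that makes it work. Let $C:=A\rtimes F$ carry the diagonal coaction; this is your second bullet, and it is where the YD condition is used. Then $(B\underline{\otimes}A)\rtimes F=B\rtimes C$ as algebras. For any $H$-comodule algebra $C$, the map $\theta\colon B\rtimes C\to (B\#H)\otimes C$, $b\otimes c\mapsto (b\#\sw{c}{-1})\otimes\sw{c}{0}$, is an algebra map. Since $\rho=\theta\circ(\delta_A\otimes\id_F)$, multiplicativity follows with no further computation. Note that you map into $(B\#H)\otimes(A\otimes F)$; nothing is identified with it.

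Second, your sketch of the direct computation misattributes the role of the YD condition. Put $h=\sw{(\swy{a}{0})}{-1}\sw{f}{-1}$ and $c=\swy{b}{-1}$. The factor $\sw{h}{1}\cdot c$ comes from the braiding in $B\underline{\otimes}A$ combined with $\delta_A(\sw{f}{-1}\cdot b)=\sw{f}{-1}\cdot\delta_A(b)$, not from the YD condition. What the YD condition does is compute the $H$-coaction of $\sw{f}{-1}\cdot\swy{b}{0}$. After the cancellation $S(\sw{k}{1})\sw{k}{2}=\varepsilon(k)1$ against the coaction of $\sw{f}{0}g$, it produces the $H$-component $\sw{h}{2}\sw{(\swy{b}{0})}{-1}\sw{g}{-1}$ and the $A$-component of the product.
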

We also denote the $B\#H$-comodule algebra by $A\#F$.

\subsection{Adjunction for the biproduct of comodule algebras}\label{adjforcomalg}

Let $R$ be a Hopf algebra with comultiplication $\Delta_R$, counit $\varepsilon_R$ and antipode $S_R$. 
We use the Sweedler notation $\Delta_R(r)=r_{(1)}\otimes r_{(2)}$  for the comultiplication of $R$.
Let 
\[\pi\colon R \to H \text{ and } \iota \colon H \to R\]
be Hopf algebra maps satisfying $\pi\circ \iota = \mathrm{id}$. 
We define the coinvariant subspace
\[\coinv{R} := \{r\in R \mid (\mathrm{id}\otimes \pi)\circ \Delta_R(r) = r\otimes 1\},\]
which is the image of the linear map
\[\Pi \colon R \longrightarrow R ;\quad  r \longmapsto r_{(1)} (\iota\circ S_H \circ \pi) (r_{(2)}).\]

Then $\coinv{R}$ is a Hopf algebra object in ${}^H_H\mathcal{YD}$ via the following structure:
\begin{itemize}
    \item ($H$-action) \quad $h.r = \iota(h_{(1)})r\iota(S_H(h_{(2)}))$. 
    \item ($H$-coaction) \quad $(\pi\otimes \mathrm{id})\circ \Delta_R$. 
    \item (multiplication) \quad the multiplication inherited from $R$. 
    \item (comultiplication) \quad $(\Pi\otimes \mathrm{id})\circ \Delta_R$. 
\end{itemize}
It is known that $R$ can be decomposed as the Radford biproduct,
i.e., $R\cong \coinv{R} \# H$ as a Hopf algebra via $r\mapsto \Pi(r_{(1)})\otimes \pi(r_{(2)})$ and $b\iota(h)\mapsfrom b\otimes h$ (see, for example \cite[Theorem~11.7.1]{Radtext}).

Since $\coinv{R}$ is a Hopf algebra in ${}^H_H\mathcal{YD}$, we can regard it as an $\coinv{R}$-comodule algebra in ${}^H_H\mathcal{YD}$ with the comultiplication serving as the $\coinv{R}$-coaction.
\begin{proposition}\label{RcFmultcoac}
For an $H$-comodule algebra $F$ with $H$-coaction $\rho_F(\ell) = \sw{\ell}{-1}\otimes \sw{\ell}{0}$ for $\ell\in F$, $\coinv{R}\#F$ has the following $R$-comodule algebra structure:
\begin{itemize}
    \item (multiplication) \quad $(a \otimes \ell)(b\otimes m) = a (\iota(\sw{\ell}{-2}) b \iota(S_H(\sw{\ell}{-1}))) \otimes \sw{\ell}{0}m$.
    \item ($R$-coaction) \quad $\rho(a\otimes \ell)   = \sw{a}{1}\iota(\sw{\ell}{-1}) \otimes \sw{a}{2} \otimes \sw{\ell}{0}$.
\end{itemize}
\end{proposition}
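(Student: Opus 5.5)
The plan is to transport the biproduct structure of Proposition~\ref{Momconstruction} along the Hopf algebra isomorphism $\Theta\colon \coinv{R}\#H \to R$, $b\otimes h\mapsto b\iota(h)$, recalled just before the statement. I take $B=\coinv{R}$ with its Yetter--Drinfeld structure: the $H$-action $h.r=\iota(\sw{h}{1})r\iota(S_H(\sw{h}{2}))$, the $H$-coaction $(\pi\otimes\id)\circ\Delta_R$, and comultiplication $(\Pi\otimes\id)\circ\Delta_R$. I regard $B$ as a $B$-comodule algebra in ${}^H_H\YD$ via its comultiplication. Proposition~\ref{Momconstruction} then makes $B\# F=B\otimes F$ a $B\#H$-comodule algebra. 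Corestricting the coaction along $\Theta$, i.e.\ applying $\Theta\otimes\id$, gives an $R$-comodule algebra structure. Comodule algebras pull back along Hopf algebra isomorphisms, so all that remains is to check that the resulting formulas are the displayed ones.

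For the multiplication, Proposition~\ref{Momconstruction} gives $(a\otimes\ell)(b\otimes m)=a(\sw{\ell}{-1}.b)\otimes\sw{\ell}{0}m$. Substituting the action $\sw{\ell}{-1}.b=\iota(\sw{\ell}{-2})b\iota(S_H(\sw{\ell}{-1}))$ after renumbering with coassociativity of $\rho_F$ yields the first formula directly.

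For the coaction, Proposition~\ref{Momconstruction} gives
\[\rho(a\otimes\ell)=\swy{a}{1}\#\sw{\swy{a}{2}}{-1}\sw{\ell}{-1}\otimes\sw{\swy{a}{2}}{0}\otimes\sw{\ell}{0},\]
where $\swy{a}{1}\otimes\swy{a}{2}=\Pi(\sw{a}{1})\otimes\sw{a}{2}$ and the $H$-coaction on $\swy{a}{2}$ is $(\pi\otimes\id)\Delta_R$. Applying $\Theta$ to the first two tensor factors gives
\[\Pi(\sw{a}{1})\,\iota\pi(\sw{a}{2})\,\iota(\sw{\ell}{-1})\otimes\sw{a}{3}\otimes\sw{\ell}{0}.\]
So the key step is the identity $\Pi(\sw{r}{1})\iota\pi(\sw{r}{2})=r$ for $r\in R$. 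This is the standard inverse relation for the biproduct decomposition, and it can also be checked directly:
\[\sw{r}{1}\,\iota S_H\pi(\sw{r}{2})\,\iota\pi(\sw{r}{3})=\sw{r}{1}\,\iota\bigl(S_H(\sw{\pi(r)}{2})\sw{\pi(r)}{3}\bigr)=\sw{r}{1}\varepsilon(\sw{r}{2})=r.\]
This uses that $\pi$ and $\iota$ are coalgebra and algebra maps. Hence $\rho(a\otimes\ell)=\sw{a}{1}\iota(\sw{\ell}{-1})\otimes\sw{a}{2}\otimes\sw{\ell}{0}$, as claimed. The remaining tensor factor $\sw{a}{2}$ need not lie in $\coinv{R}$ individually, but it does lie there as a tensor, since the expression came from $B\otimes B\otimes F$.

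I expect the main obstacle to be the bookkeeping of Sweedler indices in this coaction computation. In particular one must make sure that the $B$-comodule algebra structure on $\coinv{R}$ (comultiplication in ${}^H_H\YD$) combined with the Yetter--Drinfeld coaction really collapses as above. The multiplication part is immediate.
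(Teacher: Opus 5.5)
Your proposal is correct and matches the paper's proof: apply Proposition~\ref{Momconstruction} with $B=\coinv{R}$ regarded as a comodule algebra over itself, substitute the adjoint $H$-action to get the multiplication, and push the $\coinv{R}\#H$-coaction to $R$ along $b\otimes h\mapsto b\,\iota(h)$. The coaction then collapses via the identity $\Pi(\sw{r}{1})\,\iota(\pi(\sw{r}{2}))=r$, which the paper verifies the same way you do (it writes $S_R\circ\iota$ where you write $\iota\circ S_H$).
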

\begin{proof}
By Proposition~\ref{Momconstruction}, $\coinv{R}\#F$ is an $\coinv{R}\# H$-comodule algebra.
The multiplication can be calculated as follows:
\begin{align*}
  (a \otimes \ell)(b\otimes m) &= a (\sw{\ell}{-1}\cdot b) \otimes \sw{\ell}{0}m   \\
                              &= a (\iota(\sw{\sw{\ell}{-1}}{1}) b \iota(S_H(\sw{\sw{\ell}{-1}}{2}))) \otimes \sw{\ell}{0}m  \\
                              &= a (\iota(\sw{\ell}{-2}) b \iota(S_H(\sw{\ell}{-1}))) \otimes \sw{\ell}{0}m 
\end{align*}
by the definition of the $H$-action on $\coinv{R}$.
The $\coinv{R}\# H$-coaction can be calculated as follows:
\begin{align*}
\rho(a\otimes \ell) &= \swy{a}{-1} \# \sw{\swy{a}{0}}{-1}\sw{\ell}{-1}   \otimes \sw{\swy{a}{0}}{0} \otimes \sw{\ell}{0} \\
                    &= \Pi(\sw{a}{1}) \# \pi(\sw{a}{2}) \sw{\ell}{-1} \otimes \sw{a}{3} \otimes \sw{\ell}{0} 
\end{align*}
by the definitions of the $\coinv{R}$-coaction on $\coinv{R}$, inherited from the comultiplication of $\coinv{R}$,
and of the $H$-coaction on $\coinv{R}$.
Via the isomorphism $\coinv{R}\# H  \to R ;\; r\# h \mapsto r\iota(h)$, 
the above $\coinv{R}\# H$-coaction is regarded as the following $R$-coaction:
\begin{align*}
\rho(a\otimes \ell) &= \Pi(\sw{a}{1}) \iota(\pi(\sw{a}{2}) \sw{\ell}{-1}) \otimes \sw{a}{3} \otimes \sw{\ell}{0} \\
                    &= \sw{a}{1}\iota(\sw{\ell}{-1}) \otimes \sw{a}{2} \otimes \sw{\ell}{0}.
\end{align*}
The last equality follows from
\[\Pi(\sw{a}{1}) \iota(\pi(\sw{a}{2})) = \sw{a}{1}S_R(\iota(\pi(\sw{a}{2})))\iota(\pi(\sw{a}{3})) = \sw{a}{1}\varepsilon_R(\sw{a}{2}) = a\]
since $\pi,\iota$ are Hopf algebra maps.
\end{proof}
Thus, we can obtain an $R$-comodule algebra from each $H$-comodule algebra. 
On the other hand, given an $R$-comodule algebra $L$ with coaction $\rho_L$, $L$ is also an $H$-comodule algebra with its original multiplication and $H$-coaction $(\pi\otimes \mathrm{id})\circ \rho_L$.
We write $L^\pi$ for this $H$-comodule algebra.
Then we can consider the following functors
\begin{align*}
\coinv{R}\#(-) 
&\colon {}^H\! \mathrm{Alg} \longrightarrow {}^R\! \mathrm{Alg};\quad
&F&      \longmapsto   \coinv{R}\# F,\quad
&f\colon F\to G  &\longmapsto  \mathrm{id} \otimes f.\\
(-)^\pi 
&\colon {}^R\! \mathrm{Alg} \longrightarrow {}^H\! \mathrm{Alg};\quad
&L&      \longmapsto   L^\pi, \quad
&f\colon L\to M & \longmapsto  f.
\end{align*}
It is a key observation that they form a pair of adjoint functors $(-)^\pi\dashv  \coinv{R}\#(-)$.
To show this,
we recall the notion of the cotensor (see \cite[Section~2.3]{DNRtext}).
For $F\in {}^H\!\mathrm{Alg}$,
the cotensor $R\square_H F := \Ker (((\id_R\otimes \pi)\circ \Delta_R )\otimes \id_F - \id_R \otimes \rho_F)$
is an $R$-comodule algebra whose $R$-coaction is given by $\Delta_R\otimes \id_F$
and whose multiplication is inherited from $R\otimes F$.
The cotensor functor $R\square_H (-) \colon {}^H\M \longrightarrow {}^R\M$ is a right adjoint of $(-)^\pi\colon {}^R\M \longrightarrow {}^H\M$ (see \cite[Proposition~2.3.8]{DNRtext}).
Moreover, in the present setting, this adjunction lifts to an adjunction between the categories of comodule algebras, as follows:
\[\begin{array}{ccc}
    {}^H\!\mathrm{Alg}(L^\pi,F)  &\cong &{}^R\!\mathrm{Alg}(L,R\square_H F); \\   
       f                          &\mapsto& (\id_R \otimes f)\circ \rho_L, \\
       (\varepsilon_R \otimes \mathrm{id}_F)\circ \varphi    & \mapsfrom & \varphi.
    \end{array}
\]
So it is enough to show that the biproduct $\coinv{R} \# F$ is a realization of the cotensor product $R\square_H F$.
\begin{proposition}
The map $\Pi\otimes \mathrm{id} \colon R \square_H F \to \coinv{R} \# F$
is an isomorphism in ${}^R\! \mathrm{Alg}$ and natural in $F\in  {}^H\! \mathrm{Alg}$.
\end{proposition}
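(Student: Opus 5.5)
The plan is to check directly that $\Pi\otimes\id$ maps $R\square_H F$ into $\coinv{R}\#F$, to write down an explicit inverse, and then to verify compatibility with the coaction and the multiplication. Naturality in $F$ is immediate, since both functors act on morphisms by $\id\otimes f$ on the second tensor factor.

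\textbf{Step 1: the inverse map.} Define
\[\Theta\colon \coinv{R}\#F\to R\otimes F,\qquad a\otimes \ell\mapsto a\,\iota(\sw{\ell}{-1})\otimes \sw{\ell}{0}.\]
First I show that $\Theta$ lands in $R\square_H F$. This uses $(\id\otimes\pi)\Delta_R(a)=a\otimes 1$ for $a\in\coinv{R}$ and $\pi\iota=\id$:
\[(\id\otimes\pi)\Delta_R\bigl(a\iota(\sw{\ell}{-1})\bigr)\otimes\sw{\ell}{0}=a\iota(\sw{\ell}{-2})\otimes\sw{\ell}{-1}\otimes\sw{\ell}{0},\]
which is exactly the other side of the cotensor condition.

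Next, for $\sum r_i\otimes f_i\in R\square_H F$ the cotensor condition gives
\[\sum r_{i(1)}\otimes\pi(r_{i(2)})\otimes f_i=\sum r_i\otimes \sw{f_i}{-1}\otimes\sw{f_i}{0}.\]
Using this:
\begin{itemize}
\item $(\Pi\otimes\id)\bigl(\sum r_i\otimes f_i\bigr)=\sum r_i\,\iota S_H(\sw{f_i}{-1})\otimes\sw{f_i}{0}$.
\item Applying $\Theta$ to this gives back $\sum r_i\otimes f_i$, by the antipode identity $\iota S_H(h_{(1)})\iota(h_{(2)})=\varepsilon(h)$.
\item Conversely, $(\Pi\otimes\id)\Theta(a\otimes\ell)=a\otimes\ell$, because $\Pi(a\iota(h))=a\varepsilon(h)$ for $a\in\coinv{R}$. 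Indeed, $\Pi$ is the projection onto $\coinv{R}$ with $\Pi(r\iota(h))=\Pi(r)\varepsilon(h)$.
\end{itemize}
Hence $\Pi\otimes\id$ is a linear isomorphism with inverse $\Theta$.

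\textbf{Step 2: comodule maps.} It is easiest to check that $\Theta$ is an $R$-comodule map. Using the coaction of Proposition~\ref{RcFmultcoac}:
\[(\Delta_R\otimes\id)\Theta(a\otimes\ell)=\sw{a}{1}\iota(\sw{\ell}{-2})\otimes\sw{a}{2}\iota(\sw{\ell}{-1})\otimes\sw{\ell}{0}=(\id\otimes\Theta)\rho(a\otimes\ell).\]
This is immediate because $\iota$ is a coalgebra map.

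\textbf{Step 3: algebra maps (the main obstacle).} Again I work with $\Theta$, since its formula contains no antipode terms applied to $R$. The unit is clear: $\Theta(1\otimes 1)=1\otimes 1$. For the product,
\[\Theta(a\otimes\ell)\,\Theta(b\otimes m)=a\iota(\sw{\ell}{-1})\,b\,\iota(\sw{m}{-1})\otimes\sw{\ell}{0}\sw{m}{0}.\]
On the other side, by the multiplication formula of Proposition~\ref{RcFmultcoac},
\[\Theta\bigl((a\otimes\ell)(b\otimes m)\bigr)=a\iota(\sw{\ell}{-3})\,b\,\iota\bigl(S_H(\sw{\ell}{-2})\bigr)\iota(\sw{\ell}{-1})\iota(\sw{m}{-1})\otimes\sw{\ell}{0}\sw{m}{0}.\]
Here I have used that $\rho_F$ is multiplicative and that the product $\iota(\sw{\ell}{-3})b\,\iota(S_H(\sw{\ell}{-2}))$ already lies in $\coinv{R}$, so $\Theta$ applies as defined. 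Collapsing $S_H(\sw{\ell}{-2})\sw{\ell}{-1}=\varepsilon$ reduces this to the first expression.

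The delicate point is bookkeeping the Sweedler indices on the iterated coaction of $\ell$. Once that is done, $\Theta$ is an isomorphism in ${}^R\!\mathrm{Alg}$, and therefore so is its inverse $\Pi\otimes\id$.
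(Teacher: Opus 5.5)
Your proposal is correct and is essentially the paper's proof: the paper uses the same inverse $\psi(a\otimes\ell)=a\,\iota(\ell_{(-1)})\otimes\ell_{(0)}$ and checks the same things (it lands in $R\square_H F$, it is colinear, multiplicative and unital, and it is a two-sided inverse of $\Pi\otimes\mathrm{id}$). The only cosmetic difference is in checking $\psi\circ(\Pi\otimes\mathrm{id})=\mathrm{id}$: you first rewrite $\Pi\otimes\mathrm{id}$ on the cotensor as $\sum_i r_i\,\iota S_H(f_{i(-1)})\otimes f_{i(0)}$ and cancel with the antipode, whereas the paper moves the coaction of $F$ back onto $R$ and uses $\Pi(r_{(1)})\iota(\pi(r_{(2)}))=r$.
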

\begin{proof}
We prove that the inverse morphism is given by 
\[\psi \colon \coinv{R} \# F \to R \square_H F ;\; a\otimes \ell \mapsto a \iota (\sw{\ell}{-1})\otimes \sw{\ell}{0}.\]
We check that this map indeed takes values in $R \square_H F$.
For $a\in \coinv{R}$ and $\ell\in F$,
\begin{align*}
    ( ((\id\otimes \pi )\circ \Delta_R) \otimes \id)(\psi(a\otimes \ell))
    &= \sw{a}{1} \iota (\sw{\ell}{-2}) \otimes \pi(\sw{a}{2})\pi(\iota(\sw{\ell}{-1})) \otimes  \sw{\ell}{0} \\
    &= a \iota (\sw{\ell}{-2})\otimes \sw{\ell}{-1} \otimes \sw{\ell}{0}\\
    &= (\id \otimes \rho_F )(\psi(a\otimes \ell)).
\end{align*}
Next, we show that $\psi$ is an $R$-comodule algebra map.
For $a,b \in \coinv{R}$ and $\ell,m\in F$,
\begin{align*}
    \psi((a \otimes \ell)(b\otimes m)) 
    &= \psi (a (\iota(\sw{\ell}{-2}) b \iota(S_H(\sw{\ell}{-1}))) \otimes \sw{\ell}{0}m) \\
    &= a\iota(\sw{\ell}{-3}) b \iota(S_H(\sw{\ell}{-2})) \iota(\sw{\ell}{-1})\iota(\sw{m}{-1}) \otimes \sw{\ell}{0}\sw{m}{0} \\
    &= a\iota(\sw{\ell}{-1}) b \iota(\sw{m}{-1}) \otimes \sw{\ell}{0}\sw{m}{0}\\
    &= \psi(a\otimes \ell)\psi(b\otimes m).
\end{align*}
For $a \in \coinv{R}$ and $\ell\in F$,
\begin{align*}
(\id\otimes \psi )\circ \rho(a\otimes \ell)   &= \sw{a}{1}\iota(\sw{\ell}{-1}) \otimes \psi(\sw{a}{2} \otimes \sw{\ell}{0})\\
                                              &= \sw{a}{1}\iota(\sw{\ell}{-2}) \otimes \sw{a}{2}\iota(\sw{\ell}{-1}) \otimes \sw{\ell}{0} \\
                                              &= \Delta_R (a\iota(\sw{\ell}{-1})) \otimes \sw{\ell}{0} = (\Delta_R\otimes \id)\circ \psi(a\otimes \ell).
\end{align*}
It is obvious that $\psi$ preserves the identity element since $\iota$ preserves it.
We show that $\psi$ is the inverse map.
For $a \in \coinv{R}$ and $\ell\in F$,
\begin{align*}
    (\Pi\otimes \id)\circ \psi(a\otimes \ell) &=  \Pi(a\iota(\sw{\ell}{-1}))\otimes \sw{\ell}{0}\\
                                              &= \sw{(a\iota(\sw{\ell}{-1}))}{1} \iota(S_H (\pi(\sw{(a\iota(\sw{\ell}{-1}))}{2})))\otimes \sw{\ell}{0} \\
                                              &= \sw{a}{1}\iota(\sw{\ell}{-2})\iota (S_H(\sw{\ell}{-1})) \iota (S_H (\pi(\sw{a}{2})))\otimes \sw{\ell}{0} \\
                                              &= a\otimes \ell.
\end{align*}
The third equation follows from $a\in \coinv{R}$.
For $\sum_{i}a_i\otimes \ell_i \in R\square_H F$,
\begin{align*}
    \psi\circ (\Pi\otimes \id)(\sum_{i}a_i\otimes \ell_i) &= \sum_i \Pi(a_i)\iota(\sw{\ell_i}{-1})\otimes \sw{\ell_i}{0} \\
                                                          &= \sum_i \Pi(\sw{a_i}{1})\iota(\pi(\sw{a_i}{2}))\otimes \ell_i \\
                                                          &= \sum_i a_i \otimes \ell_i.
\end{align*}
The second equation follows from $\sum_{i}a_i\otimes \ell_i \in R\square_H F$
and the third equation follows from $\Pi(r_{(1)})\iota(\pi(r_{(2)}))=r$ for $r\in R$.

Therefore, $\Pi \otimes \id$ is an isomorphism in ${}^R\! \mathrm{Alg}$. Naturality is obvious.
\end{proof}
Thus we immediately have the following proposition.
\begin{proposition}\label{adcomodalg}
    The following is an isomorphism and is natural in $L\in {}^R\!\mathrm{Alg}$ and $F\in {}^H\!\mathrm{Alg}$.
    \[\begin{array}{ccc}
    {}^H\!\mathrm{Alg}(L^\pi,F)  &\cong &{}^R\!\mathrm{Alg}(L,\coinv{R}\# F); \\   
       f                          &\mapsto& (\Pi \otimes f)\circ \rho_L, \\
       (\varepsilon_R \otimes \mathrm{id}_F)\circ \varphi    & \mapsfrom & \varphi.
    \end{array}
     \]
\end{proposition}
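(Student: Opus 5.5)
The plan is to compose two bijections and then check that the composite matches the stated formulas. The first is the cotensor adjunction on comodule algebras, recalled just before the preceding proposition:
\[
{}^H\!\mathrm{Alg}(L^\pi,F)\cong {}^R\!\mathrm{Alg}(L,R\square_H F),\qquad f\mapsto (\id_R\otimes f)\circ\rho_L .
\]
The second is post-composition with the isomorphism $\Pi\otimes\id\colon R\square_H F\to \coinv{R}\#F$ of the preceding proposition, which is natural in $F$. Their composite is a bijection. It sends $f$ to $(\Pi\otimes\id)\circ(\id\otimes f)\circ\rho_L=(\Pi\otimes f)\circ\rho_L$, which is exactly the stated forward map.

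For the inverse, I first check that the inverse of post-composition with $\Pi\otimes\id$ is post-composition with $\psi$, where $\psi(a\otimes\ell)=a\iota(\sw{\ell}{-1})\otimes\sw{\ell}{0}$. Composing with the inverse of the cotensor adjunction, $\varphi$ is sent to
\[
(\varepsilon_R\otimes\id_F)\circ\psi\circ\varphi .
\]
Since $\varepsilon_R$ is an algebra map and $\iota$ preserves counits,
\[
(\varepsilon_R\otimes\id)\psi(a\otimes\ell)=\varepsilon_R(a)\,\varepsilon_H(\sw{\ell}{-1})\,\sw{\ell}{0}=\varepsilon_R(a)\,\ell=(\varepsilon_R\otimes\id)(a\otimes\ell).
\]
Hence $(\varepsilon_R\otimes\id)\circ\psi=\varepsilon_R\otimes\id$ on $\coinv{R}\#F$, and the inverse is $\varphi\mapsto(\varepsilon_R\otimes\id_F)\circ\varphi$, as claimed.

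Naturality in $L$ is immediate, since both bijections are given by pre-composition with $L$-morphisms. Naturality in $F$ follows from the naturality of the cotensor adjunction together with the naturality of $\Pi\otimes\id$.

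The only step that needs care is the cotensor adjunction itself, namely that it restricts from comodules to comodule algebras. Given an algebra and $H$-comodule map $f$, the map $(\id\otimes f)\rho_L$ is multiplicative because $\rho_L$ is an algebra map. Its image lies in $R\square_H F$ because $f$ is $H$-colinear for the coaction $(\pi\otimes\id)\rho_L$, and it is $R$-colinear by coassociativity. Conversely, $(\varepsilon_R\otimes\id)\varphi$ is an algebra map. It is $H$-colinear because $\varphi$ lands in the cotensor product and is $R$-colinear, using the identity $(\varepsilon_R\otimes\id_H)\circ(\id_R\otimes\pi)\circ\Delta_R=\pi$. Finally, the two assignments are mutually inverse by counitality. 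I expect no real obstacle here, only this bookkeeping.
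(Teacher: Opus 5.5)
Your proposal is correct and follows the paper's route. The paper obtains the statement ``immediately'' by composing the cotensor adjunction ${}^H\!\mathrm{Alg}(L^\pi,F)\cong {}^R\!\mathrm{Alg}(L,R\square_H F)$ with post-composition by the isomorphism $\Pi\otimes\id\colon R\square_H F\to \coinv{R}\# F$. Your additional checks --- that $(\varepsilon_R\otimes\id)\circ\psi=\varepsilon_R\otimes\id$, so the inverse has the stated form, and that the adjunction restricts to comodule algebras --- are details the paper leaves implicit, and they are right.
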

In particular, the correspondence from left to right is useful to examine the biproduct of comodule algebras, so we state it as a lemma.
\begin{lemma}\label{embedsimcomald}
    For an $R$-comodule algebra $L$, an $H$-comodule algebra $F$ and an $H$-comodule algebra map $f\colon L^\pi\to F$, 
    the map $(\Pi\otimes f)\circ \rho_L \colon L \to \coinv{R}\# F$ is an $R$-comodule algebra map.
\end{lemma}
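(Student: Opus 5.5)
The lemma is the left-to-right half of the bijection in Proposition~\ref{adcomodalg}. The cleanest route is to factor the map through the cotensor realization: write
\[
(\Pi\otimes f)\circ\rho_L=(\Pi\otimes \id_F)\circ(\id_R\otimes f)\circ\rho_L .
\]
By the preceding proposition, $\Pi\otimes\id_F\colon R\square_H F\to \coinv{R}\# F$ is an isomorphism in ${}^R\!\mathrm{Alg}$. It therefore suffices to show that $\varphi:=(\id_R\otimes f)\circ\rho_L$ is an $R$-comodule algebra map from $L$ to $R\square_H F$, and then compose.

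The first step is to check that $\varphi$ lands in $R\square_H F$. For $x\in L$ we write $\rho_L(x)=x_{(-1)}\otimes x_{(0)}$. We have
\[
\bigl(((\id\otimes\pi)\circ\Delta_R)\otimes\id\bigr)\varphi(x)=x_{(-2)}\otimes \pi(x_{(-1)})\otimes f(x_{(0)}) .
\]
On the other hand, $(\id\otimes\rho_F)\varphi(x)=x_{(-1)}\otimes \rho_F(f(x_{(0)}))$. Since $f$ is an $H$-comodule map from $L^\pi$, whose coaction is $(\pi\otimes\id)\rho_L$, the second expression also equals $x_{(-2)}\otimes\pi(x_{(-1)})\otimes f(x_{(0)})$. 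The two sides agree.

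Next, $\varphi$ is an algebra map because it is a composite of the algebra maps $\rho_L$ and $\id\otimes f$, and $R\square_H F$ carries the subalgebra structure of $R\otimes F$. Finally, $\varphi$ is $R$-colinear for the coaction $\Delta_R\otimes\id_F$: coassociativity of $\rho_L$ gives $(\Delta_R\otimes\id)\rho_L=(\id\otimes\rho_L)\rho_L$, and applying $\id\otimes\id\otimes f$ to both sides yields the claim.

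These three checks are routine; no step is a real obstacle. The only point that needs care is remembering that the multiplication on $\coinv{R}\#F$ is twisted. For this reason one should not verify multiplicativity of $(\Pi\otimes f)\circ\rho_L$ directly. Instead, transporting the structure through the isomorphism $\Pi\otimes\id$ from the previous proposition handles it automatically. Composing the two $R$-comodule algebra maps then proves the lemma.
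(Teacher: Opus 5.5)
Your proposal is correct and follows the paper's route. The paper obtains this lemma as the left-to-right direction of Proposition~\ref{adcomodalg}: it composes the lifted cotensor adjunction $f\mapsto(\id_R\otimes f)\circ\rho_L$ into $R\square_H F$ with the isomorphism $\Pi\otimes\id\colon R\square_H F\to\coinv{R}\# F$. Your three checks supply the verification of that lifted adjunction, which the paper states without giving the details.
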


As an example, let us consider the case $H=\Bbbk$ in Proposition~\ref{adcomodalg}.
\begin{example}
   Let $R$ be a finite-dimensional Hopf algebra with unit $\mu\colon \Bbbk\to R$ and counit $\varepsilon\colon R \to \Bbbk$.
   Then $\varepsilon\circ \mu = \mathrm{id}$. So we can apply Proposition~\ref{adcomodalg} with $\pi := \varepsilon$ and $\iota := \mu$.
   We have 
   \[\mathrm{Alg}(L,F) = {}^{\Bbbk}\! \mathrm{Alg}(L^\pi,F)  \cong {}^R\!\mathrm{Alg}(L,\coinv{R}\# F) = {}^R\!\mathrm{Alg}(L,R\otimes F)\] for an $R$-comodule algebra $L$ and an algebra $F$. 
   In the case of $F=\Bbbk$, it means that giving an \emph{augmentation} of $L$, which is an algebra map $L\to \Bbbk$, is the same as giving an $R$-comodule algebra map $L\to R$.
   If $L$ is also right $R$-simple, then $L \to R$ is injective, which implies that $L$ is a left coideal subalgebra of $R$.
   Conversely, every left coideal subalgebra of a finite-dimensional Hopf algebra is right $R$-simple (see \cite[Lemma~3.13]{NSS25}).
   Therefore, we have
   \begin{gather*}
   \{\text{Right $R$-simple augmented comodule algebras}\} \\
    = \{\text{Left coideal subalgebras of }R\},
   \end{gather*}
   where an \emph{augmented} comodule algebra refers to a comodule algebra with an augmentation.
\end{example}

\subsection{Right $H$-simple graded comodule algebras}
First, we recall the definitions of some graded objects.
We call a vector space $V$ a \emph{graded vector space} if it can be written as a direct sum $V=\bigoplus_{n\in \mathbb{N}_0} V(n)$ of subspaces $(V(n))_{n\in\mathbb{N}_0}$ indexed by the set of non-negative integers $\mathbb{N}_0$.
We say that a linear map $f$ from a graded vector space $V=\bigoplus_{n\in \mathbb{N}_0} V(n)$ to a graded vector space $W=\bigoplus_{n\in \mathbb{N}_0} W(n)$ \emph{preserves the grading} if $f(V(n))\subset W(n)$ for $n\in\mathbb{N}_0$.
A \emph{graded Hopf algebra} is a Hopf algebra whose underlying vector space is graded and whose structure maps, namely, multiplication, unit, comultiplication and counit, preserve the grading.
Note that the grading on the base field is defined by $\Bbbk(0)=\Bbbk, \Bbbk(n)=0$ for $n\neq 0$ and that the grading on the tensor product of  $V=\bigoplus_{n\in \mathbb{N}_0} V(n)$ and $W=\bigoplus_{n\in \mathbb{N}_0} W(n)$ is defined by $(V\otimes W)(n) =\bigoplus^n_{i=0}V(n-i)\otimes W(i)$.
Similarly, we can define a \emph{graded comodule algebra} as a comodule algebra whose underlying vector space is graded and whose multiplication, unit and coaction preserve the grading.
We say that a subspace $W$ of a graded vector space $V=\bigoplus_{n\in \mathbb{N}_0} V(n)$ is \emph{graded}  if $W=\sum_{n\in\mathbb{N}_0}(V(n)\cap W)$.
This condition holds if and only if $W$ has the grading $W(n)=V(n)\cap W$ for $n\in \mathbb{N}_0$.

Let $H=\bigoplus_{n\in\mathbb{N}_0}H(n)$ be a graded Hopf algebra.
We denote by $\pi \colon H \to H(0)$ the canonical projection and by $\iota \colon H(0)\to H$ the canonical injection. 
It is clear that $\pi\circ \iota =\mathrm{id}$, so $H$ can be decomposed as $H\cong \coinv{H}\# H(0)$.
Let $L=\bigoplus_{n\in\mathbb{N}_0}L(n)$ be a graded $H$-comodule algebra with coaction $\rho_L$.
We denote by $p \colon L \to L(0)$ the canonical projection.
We can regard $H(0)$ and $L(0)$ as graded with trivial gradings given by
\[(H(0))(n) := \begin{cases}
              H(0) & (n=0), \\
              0    & (\text{otherwise}), 
\end{cases}\quad (L(0))(n) := \begin{cases}
              L(0) & (n=0), \\
              0    & (\text{otherwise}). 
\end{cases}\]
\begin{proposition}
   The $H$-comodule algebra $\coinv{H}\# L(0)$ is a graded $H$-comodule algebra
   with grading given by
   \[(\coinv{H}\# L(0))(n) := (H(n)\cap \coinv{H})\# L(0).\]
\end{proposition}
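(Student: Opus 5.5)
We need to show three things: $\coinv{H}$ is a graded subspace, the pieces $(H(n)\cap\coinv{H})\# L(0)$ give a direct sum decomposition of $\coinv{H}\# L(0)$, and the multiplication, unit and $H$-coaction of Proposition~\ref{RcFmultcoac} (with $R=H$, $F=L(0)$) preserve the grading. Here $L(0)$ is an $H(0)$-comodule algebra. Indeed, since $\rho_L$ preserves the grading, $\rho_L(L(0))\subset H(0)\otimes L(0)$, so $L(0)$ is an $H(0)$-comodule subalgebra. Thus $\coinv{H}\#L(0)$ is defined.

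\emph{Step 1: $\coinv{H}$ is graded.} The map $\Pi(r)=r_{(1)}(\iota\circ S_{H(0)}\circ\pi)(r_{(2)})$ is a composite of grading-preserving maps: $\Delta$, $\pi$, $\iota$, multiplication, and the antipode of $H(0)$, which is trivially graded. So $\Pi$ preserves the grading. Since $\Pi$ is idempotent with image $\coinv{H}$ (it fixes $\coinv{H}$, as $\pi(r_{(2)})$ gives $1$), any $r\in\coinv{H}$ with homogeneous components $r_n\in H(n)$ satisfies $r=\Pi(r)=\sum_n\Pi(r_n)$ with $\Pi(r_n)\in H(n)\cap\coinv{H}$. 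Hence $\coinv{H}=\bigoplus_n (H(n)\cap\coinv{H})$. Tensoring with $L(0)$ then gives the vector space decomposition $\coinv{H}\#L(0)=\bigoplus_n (H(n)\cap\coinv{H})\otimes L(0)$.

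\emph{Step 2: the multiplication.} For $a\in\coinv{H}\cap H(i)$, $b\in\coinv{H}\cap H(j)$ and $\ell,m\in L(0)$, the formula gives
\[
(a\otimes\ell)(b\otimes m)=a\,\iota(\ell_{(-2)})\,b\,\iota(S(\ell_{(-1)}))\otimes \ell_{(0)}m .
\]
Here $\ell_{(-2)},\ell_{(-1)}\in H(0)$. The multiplication of $H$ preserves the grading, so the first factor lies in $H(i+j)\cap\coinv{H}$, and $\ell_{(0)}m\in L(0)$. The unit $1\otimes 1$ lies in degree $0$.

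\emph{Step 3: the coaction.} We have
\[
\rho(a\otimes\ell)=a_{(1)}\iota(\ell_{(-1)})\otimes a_{(2)}\otimes\ell_{(0)}.
\]
Since $\Delta(a)\in\bigoplus_{k}H(i-k)\otimes H(k)$, the term with $a_{(2)}\in H(k)$ has first tensor factor in $H(i-k)$. We need the middle factor to lie in $\coinv{H}$; this is the main obstacle. The coaction lands in $H\otimes(\coinv{H}\#L(0))$ only after the identification made in Proposition~\ref{RcFmultcoac}. In that identification the actual second leg is $\Pi(a_{(2)})$-type data, namely $\Delta$ followed by $\mathrm{id}\otimes\Pi$ in the appropriate form. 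Because $\Pi$ is grading-preserving, we can rewrite the coaction as a sum of terms in $H(i-k)\otimes\bigl((H(k)\cap\coinv{H})\otimes L(0)\bigr)$. This is exactly degree $i$ in $H\otimes(\coinv{H}\#L(0))$. Concretely, one checks by decomposing $\Delta(a)$ into homogeneous components that each component is again of this form, using that $\coinv{H}$ is a left coideal which is graded by Step 1.
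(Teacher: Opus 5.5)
Your proposal is correct and follows the paper's route: $\Pi$ is a composite of grading-preserving maps, so $\coinv{H}=\mathrm{Im}(\Pi)$ is graded, and the multiplication and coaction of $\coinv{H}\#L(0)$ are built from grading-preserving maps. Step 3 needs one correction. There is no real obstacle, and $\Pi$ does not act on the second leg. The braided coaction of $\coinv{H}$ is $(\Pi\otimes\mathrm{id})\circ\Delta$, with $\Pi$ on the first leg. The second leg already lies in $\coinv{H}$ because $\coinv{H}$ is a left coideal (by coassociativity), and Proposition~\ref{RcFmultcoac} already relies on this. Combined with Step 1, this gives $\Delta(a)\in\bigoplus_k H(i-k)\otimes(H(k)\cap\coinv{H})$, which is what your final sentence says.
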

\begin{proof}
First, $\pi\colon H\to H(0), \iota\colon H(0)\to H$ and $p\colon L\to L(0)$ preserve the grading.
Thus, $\Pi\colon H\to H$, defined in Section~\ref{adjforcomalg}, preserves the grading, as it is a composition of maps that preserve the grading.
Since $\coinv{H} = \mathrm{Im}(\Pi)$ is a graded subspace of $H$, $\coinv{H}\otimes L(0)$ is also a graded vector space with grading given by
\[(\coinv{H}\otimes L(0))(n)=\sum^n_{i=0} (\coinv{H}(n-i))\otimes (L(0)(i)) = (H(n)\cap \coinv{H})\otimes L(0).\]
Furthermore,  since the multiplication and the $H$-coaction of $\coinv{H}\# L(0)$ can be written as compositions of maps that preserve the grading,
the proposition follows.
\end{proof}
\begin{lemma}\label{LptoL0}
The projection $p$ is an $H(0)$-comodule algebra map from $L^\pi$ to $L(0)$.
\end{lemma}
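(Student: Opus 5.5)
We must show three things: $p$ is an algebra map $L\to L(0)$, and $p$ is a comodule map from $L^\pi$ (coaction $(\pi\otimes\id)\circ\rho_L$) to $L(0)$. The latter needs $L(0)$ to be an $H(0)$-comodule algebra in the first place. The plan is to use only that multiplication, unit and $\rho_L$ preserve the grading, together with the fact that the tensor product of $\mathbb{N}_0$-graded spaces has degree-$n$ part $\bigoplus_{i}V(n-i)\otimes W(i)$.

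First, $L(0)$ is a subalgebra: $1\in L(0)$ because the unit preserves the grading, and $L(i)L(j)\subset L(i+j)$. Likewise $L_+:=\bigoplus_{n\ge1}L(n)$ is a two-sided ideal, and $L=L(0)\oplus L_+$. Hence $p$, the projection with kernel $L_+$, is an algebra map: for $x=x_0+x_+$ and $y=y_0+y_+$ we have $xy=x_0y_0+(\text{element of }L_+)$, so $p(xy)=p(x)p(y)$, and $p(1)=1$.

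Next, the coaction. Since $\rho_L$ preserves the grading,
\[\rho_L(L(0))\subset (H\otimes L)(0)=H(0)\otimes L(0).\]
Consequently $\rho_L$ restricts to a coaction $L(0)\to H(0)\otimes L(0)$, which is an algebra map, and $H(0)$ is a Hopf subalgebra. This makes $L(0)$ an $H(0)$-comodule algebra; I take this to be the intended structure, which coincides with $(\pi\otimes\id)\circ\rho_L$ restricted to $L(0)$.

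For $n\ge1$ we have $\rho_L(L(n))\subset\bigoplus_{i=0}^n H(n-i)\otimes L(i)$. Applying $\pi\otimes p$ kills every summand except $i=0$ and $n-i=0$, which is impossible when $n\ge1$. So $(\pi\otimes p)\circ\rho_L$ vanishes on $L_+$, and on $L(0)$ it equals $\rho_L$. From this we get
\[(\id\otimes p)\circ(\pi\otimes\id)\circ\rho_L=(\pi\otimes p)\circ\rho_L=\rho_{L(0)}\circ p,\]
checked on $L(0)$ and $L_+$ separately, using $\pi=\id$ on $H(0)$. This is exactly the statement that $p\colon L^\pi\to L(0)$ is an $H(0)$-comodule map.

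Combined with the algebra-map property from the first step, this proves the lemma. There is no real obstacle here. The only point needing care is the degree bookkeeping in $\rho_L(L(n))$, namely that no summand $H(0)\otimes L(0)$ occurs for $n\ge1$, which follows from the tensor grading convention.
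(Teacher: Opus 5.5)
Your proposal is correct and takes the same route as the paper, which verifies $(\pi\otimes p)\circ\rho_L=\rho_L\circ p$ degree by degree, using that $\rho_L(L(n))\subset\bigoplus_{i=0}^{n}H(n-i)\otimes L(i)$ has no $H(0)\otimes L(0)$ summand when $n\geq 1$. Your extra check that $p$ is an algebra map, via the two-sided ideal $\bigoplus_{n\geq 1}L(n)$, is a step the paper leaves implicit, so it adds completeness rather than changing the argument.
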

\begin{proof}
We show that $(\pi\otimes p)\circ \rho_L = \rho_L\circ p $.
Let $\ell \in L(n)$. We have $\rho_L(\ell) \in \bigoplus^n_{i=0} H(n-i)\otimes L(i)$ since $\rho_L$ preserves the grading.
If $n=0$, then $(\pi\otimes p)\circ \rho_L(\ell) = \rho_L(\ell) = \rho_L \circ p(\ell) $ since $\rho_L(\ell)\in H(0)\otimes L(0)$.
If $n>0$, then either $i>0$ or $n-i>0$ for $n\geq i\geq 0$.
So $(\pi\otimes p)\circ \rho_L(\ell) = 0= \rho_L \circ p(\ell)$.
\end{proof}

\begin{lemma}\label{embedgradesimpcomodalg}
If $L$ is right $H$-simple, then $L$ is a graded $H$-comodule subalgebra of $\coinv{H}\# L(0)$.
\end{lemma}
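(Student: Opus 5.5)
The plan is to apply Lemma~\ref{embedsimcomald} with $R:=H$, $\pi\colon H\to H(0)$, $\iota\colon H(0)\to H$ the canonical maps, $F:=L(0)$ and $f:=p$. By Lemma~\ref{LptoL0}, $p\colon L^\pi\to L(0)$ is an $H(0)$-comodule algebra map, so Lemma~\ref{embedsimcomald} shows that
\[\Phi:=(\Pi\otimes p)\circ\rho_L\colon L\longrightarrow \coinv{H}\# L(0)\]
is an $H$-comodule algebra map. It then remains to show two things: that $\Phi$ preserves the grading, and that $\Phi$ is injective. Once both hold, $\Phi(L)$ is an $H$-comodule subalgebra, and it is graded because $\Phi$ is a grading-preserving injection. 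Hence $\Phi(L)(n)=\Phi(L(n))=\Phi(L)\cap(\coinv{H}\# L(0))(n)$, which identifies $L$ with a graded $H$-comodule subalgebra.

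For the grading, note that $\rho_L$, $\Pi$ and $p$ all preserve the grading; this was observed for $\Pi$ in the proof of the preceding proposition. Hence their composite $\Phi$ lands in the grading defined there, since $(H(n)\cap\coinv{H})\otimes L(0)$ is exactly the degree-$n$ part. Concretely, for $\ell\in L(n)$ we have $\rho_L(\ell)\in\bigoplus_i H(n-i)\otimes L(i)$. Applying $\Pi\otimes p$ kills every summand except $i=0$ and sends $H(n)$ into $H(n)\cap\coinv{H}$.

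For injectivity, the argument uses right $H$-simplicity. Because $\Phi$ is an $H$-comodule map and an algebra map, its kernel $\Ker\Phi$ is an $H$-comodule two-sided ideal, and in particular an $H$-comodule right ideal of $L$. Right $H$-simplicity therefore gives $\Ker\Phi=0$ or $\Ker\Phi=L$. The second case is impossible since $\Phi(1)=1\neq0$. For this we need $\coinv{H}\# L(0)\neq 0$: we have $1\in\coinv{H}$ and $L(0)\ni 1_L\neq 0$, the latter because $L\neq0$ and the unit has degree $0$. So $\Phi$ is injective.

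The main obstacle I expect is bookkeeping rather than conceptual. First, one must check that $L^\pi$ with the $H(0)$-coaction $(\pi\otimes\id)\circ\rho_L$ is exactly the source required by Lemma~\ref{LptoL0}, which the earlier lemma already provides. Second, one must confirm that the image of a grading-preserving injective map is a graded subspace, which is immediate. No further computation should be needed.
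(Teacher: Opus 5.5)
Your proposal is correct and follows the paper's proof. Like the paper, you apply Lemma~\ref{embedsimcomald} with $f=p$ via Lemma~\ref{LptoL0}, note that $(\Pi\otimes p)\circ\rho_L$ preserves the grading as a composite of grading-preserving maps, and get injectivity from right $H$-simplicity; your explicit argument (the kernel is an $H$-comodule right ideal, and it is not all of $L$ because $\Phi(1)=1\otimes 1\neq 0$) just fills in the step the paper states in one line.
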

\begin{proof}
By Lemma~\ref{embedsimcomald} and~\ref{LptoL0}, we obtain an $H$-comodule algebra map $(\Pi\otimes p)\circ \rho_L \colon L \to \coinv{H}\# L(0)$.
Moreover, this map preserves the grading since it is a composition of maps that preserve the grading.
Since $L$ is right $H$-simple, the map is injective.
\end{proof}

\begin{remark}
A special case, where in particular $H(0)$ is cosemisimple, of this statement appeared in \cite[Proposition~4.4]{V26}.
\end{remark}

\begin{lemma}\label{socleisgraded}
$\soc{L}$ is a graded $H$-subcomodule of  $L$.
Moreover, if $L$ is right $H$-simple, then $\soc{L}\subset L(0)$.
\end{lemma}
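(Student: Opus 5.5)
We use Lemma~\ref{socorad}, which says $\soc{L}=\rho_L^{-1}(\corad{H}\otimes L)$, together with the fact that $\corad{H}\subset H(0)$ for a graded Hopf algebra. The inclusion $\corad{H}\subset H(0)$ holds because $H(0)$ is a subcoalgebra and the sum $\bigoplus_{n\ge 1}H(n)$ of the positive-degree parts is a coideal. More precisely, any simple subcoalgebra $C$ of $H$ satisfies $C\subset H(0)$. To see this, consider the grading-preserving coalgebra map $\pi\colon H\to H(0)$. Its kernel $\bigoplus_{n\ge1}H(n)$ cannot contain a nonzero subcoalgebra, since $\varepsilon$ vanishes on positive degrees and $\varepsilon$ is nonzero on any nonzero subcoalgebra. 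A cleaner route is the standard fact that the coradical filtration of a graded coalgebra satisfies $H_0\subset H(0)$. We prove this directly by taking the lowest-degree part of the comultiplication of an element of a simple subcoalgebra. If this step becomes delicate, we reduce to the pointed case, where it is immediate: a group-like $g=\sum g_n$ satisfies $\Delta(g)=g\otimes g$, and comparing top-degree components forces $g=g_0$.

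\textbf{Gradedness.} Write $x=\sum_n x_n$ with $x_n\in L(n)$, and suppose $\rho_L(x)\in \corad{H}\otimes L$. Since $\corad{H}\subset H(0)$ and $\rho_L$ preserves the grading, the degree-$n$ component of $\rho_L(x)$ is exactly $\rho_L(x_n)$. The subspace $\corad{H}\otimes L=\bigoplus_n \corad{H}\otimes L(n)$ is graded, so each component $\rho_L(x_n)$ again lies in $\corad{H}\otimes L$. Hence each $x_n\in\soc{L}$, and $\soc{L}$ is a graded subspace. It is an $H$-subcomodule by definition.

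\textbf{Right $H$-simple case.} Now suppose $L$ is right $H$-simple. By Lemma~\ref{embedgradesimpcomodalg}, we regard $L$ as a graded $H$-comodule subalgebra of $\coinv{H}\#L(0)$. It then suffices to show that every $y\in \coinv{H}\#L(0)$ with $\rho(y)\in\corad{H}\otimes(\coinv{H}\#L(0))$ has degree $0$. We handle this homogeneously, using the gradedness just established.

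Take $y=\sum_i a_i\otimes \ell_i$ homogeneous of degree $n$, so that $a_i\in \coinv{H}\cap H(n)$. By Proposition~\ref{RcFmultcoac}, the coaction is $\rho(y)=\sum \sw{a_i}{1}\iota(\sw{\ell_i}{-1})\otimes \sw{a_i}{2}\otimes\sw{\ell_i}{0}$. Apply $\mathrm{id}\otimes\varepsilon\otimes\mathrm{id}$ to the middle factor; this gives $\sum a_i\iota(\sw{\ell_i}{-1})\otimes \sw{\ell_i}{0}$. This element lies in $H(n)\otimes L(0)$, and by hypothesis it also lies in $\corad{H}\otimes L(0)\subset H(0)\otimes L(0)$. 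If $n>0$, it must therefore be $0$. The map $\psi$ is injective, being the inverse of $\Pi\otimes\mathrm{id}$, so $y=0$. Hence the socle lies in degree $0$, that is, in $L(0)$.

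The main obstacle is justifying $\corad{H}\subset H(0)$ for a general graded Hopf algebra. The remaining steps are bookkeeping of degrees.
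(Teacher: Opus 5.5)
Your proof is correct. The gradedness half is the same as the paper's: $\soc{L}=\rho^{-1}(\corad{H}\otimes L)$, and $\corad{H}\otimes L$ is graded because $\corad{H}\subset H(0)$.

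For that inclusion the paper simply cites Radford's Proposition~4.4.9, and you should do the same. Your remark that $\Ker\pi$ contains no nonzero subcoalgebra only gives $\pi(C)\neq 0$ for a simple subcoalgebra $C$; it does not give $C\subset H(0)$. The pointed reduction also does not cover the lemma, where $H$ is an arbitrary graded Hopf algebra. A direct argument does exist. Let $N$ be the top degree occurring in $C$ and $q_N\colon H\to H(N)$ the projection. Then $(q_N\otimes\mathrm{id})\Delta(C)\subset q_N(C)\otimes (C\cap H(0))$. Applying $\mathrm{id}\otimes\varepsilon$ forces $C\cap H(0)\neq 0$. Since this is a subcoalgebra of the simple coalgebra $C$, it follows that $C\subset H(0)$.

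For the right $H$-simple case you take a genuinely different route.

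\textbf{The paper's route.} It stays inside $L$. Since $\soc{L}$ is graded and $\corad{H}\subset H(0)$, each piece $\soc{L}\cap L(k)$ is an $H$-subcomodule. If it is nonzero, then $(\soc{L}\cap L(k))L$ is a nonzero $H$-comodule right ideal, so it contains $1$. This is impossible for $k>0$, because that ideal lies in degrees $\geq k$.

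\textbf{Your route.} You transport $\soc{L}$ into $\coinv{H}\# L(0)$ via Lemma~\ref{embedgradesimpcomodalg}. That lemma is proved earlier, so there is no circularity, and simplicity is used only for injectivity of this graded embedding. You then show that the socle of the biproduct sits in degree $0$. The reason is that $(\mathrm{id}\otimes\varepsilon\otimes\mathrm{id})\circ\rho$ equals the injective map $\psi$, and it sends a degree-$n$ socle element into $(H(n)\otimes L(0))\cap(\corad{H}\otimes L(0))$.

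\textbf{Comparison.} Your argument yields a structural fact independent of simplicity: the socle of $\coinv{H}\# F\cong H\square_{H(0)}F$ lies in degree $0$ for every $H(0)$-comodule algebra $F$. Hence the same holds for anything graded-embedded in it. The price is reliance on the biproduct and cotensor machinery (Proposition~\ref{RcFmultcoac} and the inverse $\psi$ of $\Pi\otimes\mathrm{id}$). The paper's proof is elementary by comparison, using only right $H$-simplicity and the fact that $1\in L(0)$.
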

%
\begin{proof}
We have $\corad{H}\otimes L = \bigoplus_{n\in\mathbb{N}_0} \corad{H}\otimes L(n)$.
By \cite[Proposition~4.4.9]{Radtext}, $\corad{H}\subset H(0)$, and so $\corad{H}\otimes L(n)\subset H(0)\otimes L(n) \subset (H\otimes L)(n)$ for each $n\in\mathbb{N}_0$.
Thus, $\corad{H}\otimes L$ is a graded subspace of $H\otimes L$.
Hence, $\rho^{-1}(\corad{H}\otimes L)$ is a graded subspace of $L$ since $\rho$ preserves the grading.
Moreover, $\soc{L} = \rho^{-1}(\corad{H}\otimes L)$ by Lemma~\ref{socorad}.
Therefore, $\soc{L}$ is a graded $H$-subcomodule of $L$ since $\soc{L}$ is an $H$-subcomodule of $L$.

Suppose $L$ is right $H$-simple.
Let $k\in\mathbb{N}_0$ be such that $\soc{L}\cap L(k) \neq 0$.
We have $\rho(\soc{L})\subset H(0) \otimes \soc{L} = \bigoplus_{n\in\mathbb{N}_0} H(0) \otimes (L(n)\cap \soc{L})$
since $\soc{L} = \rho^{-1}(\corad{H}\otimes L)$, $\corad{H}\subset H(0)$ and $\soc{L}$ is a graded $H$-comodule.
Moreover, $\rho(L(k))\subset  (H\otimes L)(k)$,
and hence $\rho(\soc{L}\cap L(k))\subset H(0)\otimes (L(k)\cap \soc{L})$.
Therefore, $(\soc{L}\cap L(k))L$ is an $H$-comodule right ideal of $L$.
So we have $(\soc{L}\cap L(k))L = L$. 
Then $1\in L=(\soc{L}\cap L(k))L\subset L(k)L \subset \bigoplus_{n\geq k} L(n)$.
If $k>0$, then this is a contradiction.
Therefore, $\soc{L}\cap L(k) = 0$ for all $k>0$.
Thus, $\soc{L} = \bigoplus_{n\in\mathbb{N}_0} (\soc{L}\cap L(n)) = \soc{L}\cap L(0)$, and hence $\soc{L}\subset L(0)$.
\end{proof}

The filtered analogue of the following proposition was presented in \cite[Theorem~4.4]{NSS25} under the finite-dimensional assumption.
In the graded case, by adding a condition, we can remove the finite-dimensional assumption.

\begin{proposition}\label{gradedrightsimple}
The following are equivalent:
\begin{itemize}
    \item [\textup{(i)}] $L$ is right $H$-simple.
    \item [\textup{(ii)}] $L(0)$ is right $H(0)$-simple and $\soc{L}\subset L(0)$.
\end{itemize}
\end{proposition}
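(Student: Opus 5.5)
The plan is to prove both implications directly from the definitions, using only the grading and Lemma~\ref{socleisgraded}. We do not use Proposition~\ref{eqrightsimplicity}, since $H$ is not assumed pointed here.

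First a preliminary observation. Since $\rho_L$ preserves the grading, $\rho_L(L(0))\subset H(0)\otimes L(0)$. So $L(0)$ is an $H(0)$-comodule algebra, and its $H(0)$-coaction is just the restriction of $\rho_L$, because $\pi$ is the identity on $H(0)$. Consequently, a subspace $I\subset L(0)$ is an $H(0)$-subcomodule of $L(0)$ if and only if $\rho_L(I)\subset H\otimes I$. This holds because $H(0)\otimes L(0)\cap H\otimes I=H(0)\otimes I$.

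\textbf{(i)$\Rightarrow$(ii).} The inclusion $\soc{L}\subset L(0)$ is exactly the second part of Lemma~\ref{socleisgraded}. For right $H(0)$-simplicity, note that $L(0)\ni 1\neq 0$. Let $0\neq I$ be an $H(0)$-comodule right ideal of $L(0)$, and consider $IL$.
\begin{itemize}
\item By the observation, $\rho_L(I)\subset H\otimes I$. Since $\rho_L$ is multiplicative, $IL$ is a nonzero $H$-comodule right ideal of $L$, so right $H$-simplicity gives $IL=L$.
\item Now compare degree-$0$ components. Since $I\subset L(0)$, we have $IL=\bigoplus_n I\,L(n)$ with $I\,L(n)\subset L(n)$. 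Hence $(IL)(0)=I\,L(0)=I$, and $L(0)=I$.
\end{itemize}

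\textbf{(ii)$\Rightarrow$(i).} Note $L\neq0$ since $L(0)\neq0$. Let $J$ be a nonzero $H$-comodule right ideal of $L$, and put $I:=J\cap L(0)$.
\begin{itemize}
\item $I$ is a right ideal of $L(0)$.
\item $I$ is an $H(0)$-subcomodule, since $\rho_L(I)\subset (H\otimes J)\cap(H(0)\otimes L(0))=H(0)\otimes I$.
\item $I$ is nonzero. By the fundamental theorem of comodules, $J$ contains a simple $H$-subcomodule $N$. Then $N\subset\soc{L}\subset L(0)$, so $0\neq N\subset I$.
\end{itemize}
Right $H(0)$-simplicity of $L(0)$ then gives $I=L(0)\ni1$, hence $J=L$.

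Neither direction seems to have a real obstacle. The only points needing care are the degree bookkeeping, namely $(IL)(0)=I$ and the intersection identity for the coaction on $J\cap L(0)$, and the use of the hypothesis $\soc{L}\subset L(0)$ to guarantee $J\cap L(0)\neq0$. That hypothesis is exactly what replaces the finite-dimensionality assumption in the filtered version.
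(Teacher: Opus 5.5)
Your proof is correct and follows essentially the same route as the paper's: (i)$\Rightarrow$(ii) is identical, using Lemma~\ref{socleisgraded} and then comparing degree-$0$ parts of $IL=L$. In (ii)$\Rightarrow$(i) the only difference is cosmetic. You take the $H(0)$-comodule right ideal $J\cap L(0)$ of $L(0)$, justified by $(H\otimes J)\cap(H(0)\otimes L(0))=H(0)\otimes(J\cap L(0))$, whereas the paper uses $(J\cap\mathrm{soc}(L))L(0)$; in both versions the hypothesis $\mathrm{soc}(L)\subset L(0)$ is exactly what makes this ideal nonzero.
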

\begin{proof}
We prove the implication (i)$\Rightarrow$(ii).
The statement $\soc{L} \subset L(0)$ follows from Lemma~\ref{socleisgraded}.
We show $L(0)$ is right $H(0)$-simple.
Let $0\neq I$ be an $H(0)$-comodule right ideal of $L(0)$.
Then $IL$ is an $H$-comodule right ideal of $L$, and hence $IL = L$ since $L$ is right $H$-simple.
Thus, we have $L(0) = L(0) \cap IL$.
Moreover, $IL = I\bigoplus_{n\in\mathbb{N}_0} L(n) = \sum_{n\in\mathbb{N}_0} I(L(n))$,
and hence $L(0) \cap IL = I(L(0))$ since $I(L(n))\subset L(0)L(n)\subset L(n)$ for $n\in\mathbb{N}_0$.
Since $I$ is a right ideal of $L(0)$, $I(L(0)) =I$, which implies $L(0)=I$.

We prove the implication (ii)$\Rightarrow$(i).
Let $0\neq I$ be an $H$-comodule right ideal of $L$.
By $\soc{L}\subset L(0)$, $\rho(\soc{L})\subset H(0)\otimes \soc{L}$,
which implies $(I\cap \soc{L})L(0)$ is an $H(0)$-comodule right ideal of $L(0)$.
Moreover, by the fundamental theorem of comodules, $I$ has a simple $H$-subcomodule, and hence $I\cap \soc{L}\neq 0$.
Therefore, $(I\cap \soc{L})(L(0))=L(0)$ since $L(0)$ is right $H(0)$-simple.
Thus, $1\in L(0)= (I\cap \soc{L})(L(0)) \subset I L \subset I$ since $I$ is a right ideal.
\end{proof}

\begin{proposition}\label{embedHckF}
Assume $H(0) = \Bbbk G$ for some group $G$.
If $L$ is right $H$-simple and $L(0)^{\mathrm{co}H(0)} =\Bbbk$, then $L$ is a graded $H$-comodule subalgebra of $\coinv{H}\# ({}_\psi\Bbbk F)$ for some subgroup $F$ of $G$ and some $2$-cocycle $\psi$ of $F$.
\end{proposition}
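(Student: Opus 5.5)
The plan is to combine three earlier results: Lemma~\ref{embedgradesimpcomodalg}, Proposition~\ref{gradedrightsimple} and Proposition~\ref{rightKGsimple}.

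First, since $L$ is right $H$-simple, Lemma~\ref{embedgradesimpcomodalg} shows that $L$ is a graded $H$-comodule subalgebra of $\coinv{H}\# L(0)$. The embedding is the map $(\Pi\otimes p)\circ\rho_L$. So it remains to identify $L(0)$, as an $H(0)$-comodule algebra, with some ${}_\psi\Bbbk F$.

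Second, by the implication (i)$\Rightarrow$(ii) of Proposition~\ref{gradedrightsimple}, $L(0)$ is right $H(0)$-simple. Here $L(0)$ is regarded as an $H(0)$-comodule algebra: by grading, $\rho_L(L(0))\subset H(0)\otimes L(0)$. Since $H(0)=\Bbbk G$ and $L(0)^{\mathrm{co}H(0)}=\Bbbk$ by hypothesis, Proposition~\ref{rightKGsimple} gives $L(0)={}_\psi\Bbbk F$ for some subgroup $F$ of $G$ and some $2$-cocycle $\psi$ of $F$. In fact $F=F(L(0))$.

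Third, I must check that transporting along an isomorphism $L(0)\cong{}_\psi\Bbbk F$ of $H(0)$-comodule algebras is harmless. By functoriality of $\coinv{H}\#(-)$, an $H(0)$-comodule algebra isomorphism $\phi$ induces the $H$-comodule algebra isomorphism $\mathrm{id}\otimes\phi\colon \coinv{H}\# L(0)\to \coinv{H}\#({}_\psi\Bbbk F)$. Both sides carry trivial grading in the second factor, so this map preserves the grading. Composing it with the embedding gives a graded, injective $H$-comodule algebra map $L\to \coinv{H}\#({}_\psi\Bbbk F)$, which is the claim.

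There is no real obstacle. The only point needing care is that the hypothesis concerns the coinvariants of $L(0)$ under $H(0)$, not under $H$, and this is exactly what Proposition~\ref{rightKGsimple} requires.
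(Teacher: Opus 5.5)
Your proposal is correct and follows the paper's proof: Lemma~\ref{embedgradesimpcomodalg} gives the graded embedding $L\subset\coinv{H}\# L(0)$, Proposition~\ref{gradedrightsimple} shows that $L(0)$ is right $H(0)$-simple, and Proposition~\ref{rightKGsimple} identifies $L(0)$ with some ${}_\psi\Bbbk F$. Your extra check that the identification transports along $\mathrm{id}\otimes\phi$ compatibly with the grading is a detail the paper leaves implicit.
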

\begin{proof}
By Lemma~\ref{embedgradesimpcomodalg}, we have $L \subset \coinv{H}\# L(0)$.
By Proposition~\ref{gradedrightsimple}, $L(0)$ is a right $H(0)$-simple $H(0)$-comodule algebra.
Therefore, by Proposition~\ref{rightKGsimple}, $L(0)= {}_\psi \Bbbk F$ for some subgroup $F$ of $G$ and some $2$-cocycle $\psi$ of $F$.
\end{proof}

\begin{lemma}\label{HcFrHsimp}
Assume $H(0) = \Bbbk G$ for some group $G$.
For a subgroup $F$ of $G$ and a $2$-cocycle $\psi$ of $F$, the following hold:
\begin{itemize}
    \item $\coinv{H}\# ({}_\psi\Bbbk F)$ is right $H$-simple.
    \item $(\coinv{H}\# ({}_\psi\Bbbk F))^{\mathrm{co}H} = \Bbbk$.
\end{itemize}
\end{lemma}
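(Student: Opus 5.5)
The plan is to apply Proposition~\ref{gradedrightsimple} to the graded $H$-comodule algebra $L := \coinv{H}\# ({}_\psi\Bbbk F)$, graded by $L(n) = (H(n)\cap \coinv{H})\# {}_\psi\Bbbk F$. This reduces right $H$-simplicity to two checks: that $L(0)$ is right $H(0)$-simple, and that $\soc{L}\subset L(0)$. Trivial coinvariants will then follow from the inclusion $\soc{L}\subset L(0)$.

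\emph{Step 1: identify $L(0)$.} For $h\in H(0)$ we have $\pi(h)=h$, so $\Pi(h)=\sw{h}{1}S_H(\sw{h}{2})=\varepsilon(h)1$. Since $\Pi$ preserves the grading and is a projection onto $\coinv{H}$, this gives $H(0)\cap\coinv{H}=\Pi(H(0))=\Bbbk$. Hence $L(0)=\Bbbk 1\otimes {}_\psi\Bbbk F$. On elements $1\otimes\ell$, the formulas of Proposition~\ref{RcFmultcoac} reduce to the following:
\begin{itemize}
\item the multiplication is $(1\otimes\ell)(1\otimes m)=\iota(\sw{\ell}{-2})\iota(S_H(\sw{\ell}{-1}))\otimes\sw{\ell}{0}m=1\otimes \ell m$;
\item the coaction is $\rho(1\otimes\ell)=\iota(\sw{\ell}{-1})\otimes 1\otimes \sw{\ell}{0}$.
\end{itemize}
So $L(0)\cong{}_\psi\Bbbk F$ as an $H(0)$-comodule algebra. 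It is therefore right $\Bbbk G$-simple with $\Bbbk G$-coinvariants equal to $\Bbbk$, by Proposition~\ref{rightKGsimple}.

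\emph{Step 2: show $\soc{L}\subset L(0)$.} This is the main point. By Lemma~\ref{socorad}, $\soc{L}=\rho^{-1}(\corad{H}\otimes L)$. I will use the injective map
\[
\psi_0\colon L\to H\otimes {}_\psi\Bbbk F,\qquad a\otimes\ell\mapsto a\iota(\sw{\ell}{-1})\otimes\sw{\ell}{0}.
\]
This is the inverse isomorphism onto $H\square_{H(0)}{}_\psi\Bbbk F$ from the preceding proposition. It satisfies $\psi_0=(\id_H\otimes\varepsilon_H\otimes\id)\circ\rho$, since $\varepsilon(\sw{a}{2})\sw{a}{1}=a$.

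Let $x\in\soc{L}$. Then $\rho(x)\in\corad{H}\otimes L$, and applying $\id\otimes\varepsilon_H\otimes\id$ gives $\psi_0(x)\in\corad{H}\otimes{}_\psi\Bbbk F$. By \cite[Proposition~4.4.9]{Radtext}, $\corad{H}\subset H(0)$, so $\psi_0(x)\in H(0)\otimes{}_\psi\Bbbk F$.

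Now $\psi_0$ preserves the grading, with ${}_\psi\Bbbk F$ concentrated in degree $0$. So $\psi_0$ maps $L(n)$ into $H(n)\otimes{}_\psi\Bbbk F$. Decompose $x=\sum_n x_n$ with $x_n\in L(n)$. Comparing homogeneous components of $\psi_0(x)$ gives $\psi_0(x_n)=0$ for all $n>0$. Injectivity of $\psi_0$ then gives $x_n=0$ for $n>0$, so $x\in L(0)$. Proposition~\ref{gradedrightsimple} now yields that $L$ is right $H$-simple.

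\emph{Step 3: coinvariants.} We have $L^{\mathrm{co}H}\subset\soc{L}\subset L(0)$, because $\rho(x)=1\otimes x$ lies in $\corad{H}\otimes L$. On $L(0)\cong{}_\psi\Bbbk F$, the $H$-coaction is $\iota$ applied to the $\Bbbk G$-coaction, and $\iota$ is injective. Hence a coinvariant of $L$ is exactly an element of $({}_\psi\Bbbk F)^{\mathrm{co}\Bbbk G}=\Bbbk$, so $L^{\mathrm{co}H}=\Bbbk$.

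I expect Step~2 to be the only delicate part. The key observation there is that applying $\varepsilon_H$ to the middle tensor factor of $\rho$ gives the injective map $\psi_0$.
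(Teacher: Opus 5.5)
Your proof is correct, but it takes a different route from the paper's.

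\textbf{The paper's route.} The paper argues through the invertibility criterion. It first observes that $\corad{H}\subset H(0)=\Bbbk G\subset\corad{H}$, so $H$ is pointed with $G=G(H)$. Then, for $0\neq a=\sum_f h_f\otimes f$ in the $g$-component $X_g$, it applies $\varepsilon$ to the middle tensor factor of the coaction and obtains $\varepsilon(h_f)g=h_f f$. Applying $\pi$ to this forces $h_f=0$ for $f\neq g$, so $a=\varepsilon(h_g)\otimes g$ is invertible. Proposition~\ref{eqrightsimplicity}, whose direction (iii)$\Rightarrow$(ii) needs pointedness, then gives right $H$-simplicity. Taking $g=e$ gives the coinvariants.

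\textbf{Your route.} You go through the graded criterion, Proposition~\ref{gradedrightsimple}, instead. Your map $\psi_0=(\id\otimes\varepsilon_H\otimes\id)\circ\rho$ is injective and preserves the grading. This is the same ``counit on the middle factor'' trick, but you apply it to the whole socle rather than to one component $X_g$ at a time.

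\textbf{What each buys.} The paper's computation gives an explicit picture: $X_g=\Bbbk(1\otimes g)$ for $g\in F$ and $X_g=0$ otherwise, so $F(X)=F$. Your argument is more general. Step~2 never uses that the second factor is ${}_\psi\Bbbk F$, nor that $H(0)$ is a group algebra. So the same argument shows the following for any graded Hopf algebra $H$ and any right $H(0)$-simple $H(0)$-comodule algebra $F'$: $\coinv{H}\# F'$ is right $H$-simple, and $(\coinv{H}\# F')^{\mathrm{co}H}\cong F'^{\mathrm{co}H(0)}$. An example is $\coinv{(\gr H)}\#\soc{A}$ in the appendix, where $\soc{A}$ need not have trivial coinvariants.

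\textbf{One point you assume.} You take for granted that $\coinv{H}\#({}_\psi\Bbbk F)$, with grading $(H(n)\cap\coinv{H})\#{}_\psi\Bbbk F$, is a graded $H$-comodule algebra. The paper's proof of this for $\coinv{H}\# L(0)$ applies verbatim once you regard ${}_\psi\Bbbk F$ as a graded $H$-comodule algebra concentrated in degree $0$.
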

\begin{proof}
By \cite[Proposition~4.4.9]{Radtext}, we have $\corad{H}\subset H(0)=\Bbbk G\subset \corad{H}$,
and hence $H$ is a pointed Hopf algebra and $G=G(H)$.
Set $X := (\coinv{H}\# ({}_\psi\Bbbk F))$.
Let $g\in G$ and let $0\neq a\in X_g$.
We write $a=\sum_{f\in F} h_f \otimes f$ where $h_f \in \coinv{H}$ and $f\in F$.
Then we have
\[g \otimes \sum_f h_f \otimes f = \rho(a) = \sum_f \sw{(h_f)}{1}f\otimes \sw{(h_f)}{2}\otimes f\]
by Proposition~\ref{RcFmultcoac}.
Applying the counit on the second tensorand, we obtain
\[\sum_f \varepsilon(h_f) g \otimes   f  = \sum_f h_f f\otimes f.\]
Thus, $\varepsilon(h_f) g = h_f f$ for each $f\in F$. 
Since $h_f \in \coinv{H}$, $\pi(h_f) =\varepsilon (h_f)$.
So $\varepsilon(h_f) g = \pi(\varepsilon(h_f) g) = \pi (h_f f) =\pi (h_f)\pi(f) = \varepsilon (h_f)f$.
Thus, $\varepsilon (h_f) = 0$ for $f\neq g$.
Returning to the equality $\varepsilon(h_f) g = h_f f$, we have $h_f = 0$ for $f\neq g$.
Since $0 \neq a=\sum_f h_f \otimes f$, we have $g\in F$, and hence $a = \varepsilon(h_g)\otimes g$.
Thus, $a$ is invertible in $X$.
By Proposition~\ref{eqrightsimplicity}, the first statement holds.
Moreover, considering the case $g=e$, the second statement holds.
\end{proof}

\begin{theorem}\label{structgrrsimp}
Assume $H(0) = \Bbbk G$ for some finite group $G$.
Then the following are equivalent:
\begin{itemize}
    \item [\textup{(i)}] $L$ is right $H$-simple and $L^{\mathrm{co} H} = \Bbbk$.
    \item [\textup{(ii)}] $L$ is a graded $H$-comodule subalgebra of $\coinv{H}\# ({}_\psi\Bbbk F)$ for some subgroup $F$ of $G$ and some $2$-cocycle $\psi$ of $F$.
\end{itemize}
\end{theorem}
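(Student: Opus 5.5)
For (i)$\Rightarrow$(ii), we invoke Proposition~\ref{embedHckF}, which gives the embedding once we know that $L(0)^{\mathrm{co}H(0)}=\Bbbk$. First, $L(0)^{\mathrm{co}H(0)}\subset L^{\mathrm{co}H}$. To see this, take $\ell\in L(0)$ with $\rho_{L(0)}(\ell)=1\otimes\ell$ for the $H(0)$-coaction. Since $\rho_L(L(0))\subset H(0)\otimes L(0)$, the $H(0)$-coaction on $L(0)$ is exactly the restriction of $\rho_L$. This is the $n=0$ case in the proof of Lemma~\ref{LptoL0}. Hence $\rho_L(\ell)=1\otimes \ell$, so $\ell\in L^{\mathrm{co}H}=\Bbbk$. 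Thus $L(0)^{\mathrm{co}H(0)}=\Bbbk$, and Proposition~\ref{embedHckF} gives an embedding $L\subset \coinv{H}\#({}_\psi\Bbbk F)$ as a graded $H$-comodule subalgebra, for some subgroup $F\le G$ and $2$-cocycle $\psi$.

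For (ii)$\Rightarrow$(i), suppose $L\subset X:=\coinv{H}\#({}_\psi\Bbbk F)$ is a graded $H$-comodule subalgebra. By Lemma~\ref{HcFrHsimp}, $X$ is right $H$-simple and $X^{\mathrm{co}H}=\Bbbk$. The coinvariants are immediate: $L^{\mathrm{co}H}=L\cap X^{\mathrm{co}H}\subset\Bbbk$, and it contains $\Bbbk 1$, so $L^{\mathrm{co}H}=\Bbbk$.

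Right $H$-simplicity of $L$ is the main step. As in the proof of Lemma~\ref{HcFrHsimp}, $H$ is pointed with $G(H)=G$, so by Proposition~\ref{eqrightsimplicity} it suffices to show that every nonzero $a\in L_g$ is invertible in $L$. Since $X$ is right $H$-simple, $a$ is invertible in $X$; indeed the proof of Lemma~\ref{HcFrHsimp} shows $a=\lambda\otimes g$ with $\lambda\ne0$ and $g\in F$. The difficulty is that $L$ need not be finite-dimensional, so Proposition~\ref{fdsubofrightHsimple} does not apply directly.

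Finiteness of $G$ resolves this. Because $X_g$ is one-dimensional, spanned by the group-like element $1\otimes g$, the nonzero elements of $L_g$ are scalar multiples of $u:=1\otimes g$. The map $g\mapsto u_g$ defines elements of $X$ with $u_gu_h\in\Bbbk^\times u_{gh}$. Since $G$ is finite, $g$ has finite order $n$, so $u^n\in\Bbbk^\times u_e=\Bbbk^\times 1$. Then $u^{n-1}\in L$ is a scalar multiple of $u^{-1}$, so $u^{-1}\in L$. Hence every nonzero element of $L_g$ is invertible in $L$, and $L$ is right $H$-simple.

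I expect this last invertibility argument to be the only nonformal step. It is also where the hypothesis that $G$ is finite is actually used; if $G$ were infinite, some powers of $u$ would escape.
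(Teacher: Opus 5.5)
Your proposal is correct and follows essentially the same route as the paper: (i)$\Rightarrow$(ii) goes through Proposition~\ref{embedHckF} after noting $L(0)^{\mathrm{co}H(0)}\subset L^{\mathrm{co}H}$. For (ii)$\Rightarrow$(i) you use the same finiteness trick, namely that a power of $0\neq x\in L_g$ is a nonzero scalar, so a lower power lies in $L$ and serves as the inverse. The only difference is cosmetic. The paper gets $0\neq x^{|G|}\in L^{\mathrm{co}H}=\Bbbk$ from invertibility in $X$ and coinvariance, whereas you compute it directly from $X_g=\Bbbk(1\otimes g)$ and $u_gu_h=\psi(g,h)u_{gh}$.
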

\begin{proof}
The implication (i)$\Rightarrow$(ii) follows from Proposition~\ref{embedHckF},
where the assumption $L(0)^{\mathrm{co}H(0)} =\Bbbk$ in the proposition follows from $L^{\mathrm{co} H} = \Bbbk$.

We prove the implication (ii)$\Rightarrow$(i).
By Lemma~\ref{HcFrHsimp}, $(\coinv{H}\# ({}_\psi\Bbbk F))^{\mathrm{co}H}=\Bbbk$.
Therefore, we have $L^{\mathrm{co}H}=\Bbbk$ since $L\subset \coinv{H}\# ({}_\psi\Bbbk F)$.
By the same argument as in Lemma~\ref{HcFrHsimp}, $H$ is a pointed Hopf algebra.
Let $g\in G$ and let $0\neq x\in L_g$.
Since $x\in L_g \subset (\coinv{H}\# ({}_\psi\Bbbk F))_g$,
$x$ is invertible in $\coinv{H}\# ({}_\psi\Bbbk F)$ by Lemma~\ref{HcFrHsimp} and Proposition~\ref{eqrightsimplicity}.
In particular, $x^{|G|} \neq 0$.
Moreover, by Lagrange's theorem, we have $\rho(x^{|G|})=\rho(x)^{|G|}=g^{|G|}\otimes x^{|G|} = 1 \otimes x^{|G|}$.
Thus, $x^{|G|}\in L^{\mathrm{co}H}=\Bbbk$.
Furthermore, $x^{{|G|}-1}\in L$, and hence $x$ is invertible in $L$.
Therefore, $L$ is right $H$-simple by Proposition~\ref{eqrightsimplicity}.
\end{proof}

As a result,
for a graded Hopf algebra $H$ with $H(0)=\Bbbk G$ for some finite group $G$,
\begin{align*}
&\{\text{Graded right $H$-simple $H$-comodule algebras with trivial coinvariants}\}\\
&= \begin{array}{c}
                        \left\{
                        \begin{array}{l}
                           \text{Graded $H$-comodule subalgebras of $\coinv{H}\# ({}_\psi\Bbbk F)$,}\\
                           \text{for some subgroup $F$ of $G$ and some $2$-cocycle $\psi$ of $F$}
                        \end{array}\right\}.
\end{array}
\end{align*}

\section{A Lagrange-type theorem}\label{Lagrange}
We recall definitions and some facts about the coradical filtration and the Loewy series (see, for example \cite{AD} and \cite[Section~4]{M10}).
For a Hopf algebra $H$, the \emph{coradical filtration} of $H$ is defined by
$H_0:= \corad{H}$ and $H_n := \Delta^{-1}(H_0\otimes H + H\otimes H_{n-1})$ for $n\geq 1$.
For an $H$-comodule algebra $A$, the \emph{Loewy series} of $A$ is defined by
$A_n := \rho^{-1}(H_n\otimes A)$ for $n\geq 0$.
In particular, $A_0 = \soc{A}$ by Lemma~\ref{socorad}.
If $\corad{H}$ is a Hopf subalgebra of $H$,
then the coradical filtration is a Hopf algebra filtration and the Loewy series is an $H$-comodule algebra filtration.
In this case, consider the associated graded algebras
$(\gr H)(n) :=  H_n/H_{n-1}$, $(\gr A)(n) :=  A_n/A_{n-1}$
for $n\in \mathbb{N}_0$ with convention $H_{-1} =0$ and $A_{-1} =0$.
Then $\gr H = \bigoplus_{n\in \mathbb{N}_0} (\gr H)(n)$ forms a graded Hopf algebra
and also $\gr A = \bigoplus_{n\in \mathbb{N}_0} (\gr A)(n)$ forms a graded $\gr H$-comodule algebra.
It is well known that $\gr H$ is \emph{coradically graded}, which means that the coradical filtration is given by $(\gr H)_n = \bigoplus_{i\leq n} (\gr H)(i)$.
$\gr A$ is also \emph{Loewy graded}, which means that the Loewy series is given by $(\gr A)_n = \bigoplus_{i\leq n} (\gr A) (i)$.
\begin{lemma}\label{socgrA}
In the above setting, $(\gr A)(0)=\soc{\gr A}$.
\end{lemma}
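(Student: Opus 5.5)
The plan is to reduce the claim to the Loewy-graded property of $\gr A$ stated just above. By Lemma~\ref{socorad} applied to the $\gr H$-comodule $\gr A$,
\[
\soc{\gr A} = \rho^{-1}\bigl(\corad{\gr H}\otimes \gr A\bigr).
\]
By definition, the Loewy series of $\gr A$ has zeroth term $(\gr A)_0 = \rho^{-1}\bigl((\gr H)_0\otimes \gr A\bigr)$. Here $(\gr H)_0 = \corad{\gr H}$, since the coradical filtration starts with the coradical. Hence $\soc{\gr A} = (\gr A)_0$. Since $\gr A$ is Loewy graded, $(\gr A)_0 = \bigoplus_{i\le 0}(\gr A)(i) = (\gr A)(0)$, which gives the claim.

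If one does not want to rely on the quoted "well known" Loewy-gradedness, I would prove the two inclusions directly.

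For $(\gr A)(0)\subset \soc{\gr A}$, note that $(\gr A)(0) = A_0 = \soc{A}$. The coaction of $A$ maps $A_0$ into $H_0\otimes A_0$, and this is exactly the degree-$0$ part of the coaction of $\gr A$. Since $(\gr H)(0) = H_0 = \corad{H}$ is contained in $\corad{\gr H}$ (simple subcoalgebras of $H_0$ remain simple subcoalgebras of $\gr H$), Lemma~\ref{socorad} gives the inclusion.

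For the reverse inclusion, I would use that the coradical of a graded coalgebra lies in degree $0$ (\cite[Proposition~4.4.9]{Radtext}). This gives $\corad{\gr H} = H_0 = (\gr H)(0)$. The argument in the first paragraph of the proof of Lemma~\ref{socleisgraded} shows that $\soc{\gr A}$ is a graded subspace. So it suffices to show that a homogeneous element $x\in(\gr A)(n)$ with $n>0$ and $\rho(x)\in H_0\otimes \gr A$ must be zero.

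For such an element, write $x=\bar a$ with $a\in A_n$. The degree-$n$ component of the coaction of $\gr A$ pairs $(\gr H)(0)$ with $(\gr A)(n)$. This gives
\[
\rho(a)\in H_0\otimes A_n + H_{n-1}\otimes A + H\otimes A_{n-1},
\]
where the last two terms come from the filtration on $H\otimes A$. I expect this step, deducing $a\in A_{n-1}$, to be the main obstacle.

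I would attack it by applying $\Delta\otimes\id$ and using coassociativity, together with the defining relation $H_{n} = \Delta^{-1}(H_0\otimes H+H\otimes H_{n-1})$. The aim is to show $\rho(a)\in H_{n-1}\otimes A$, so that $a\in A_{n-1}$ and $x=0$. This is the same mechanism as in the standard proof that $\gr H$ is coradically graded, applied to the comodule $A$.
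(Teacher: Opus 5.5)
Your first paragraph is correct and is exactly the paper's proof: you apply Lemma~\ref{socorad} to the $\gr H$-comodule $\gr A$, identify $(\gr H)_0=\corad{\gr H}$, and use Loewy-gradedness to get $(\gr A)_0=(\gr A)(0)$. As an aside, the obstacle in your optional direct argument comes from using too coarse an estimate: the kernel of $(H\otimes A)_n\to\bigoplus_{i+j=n}(\gr H)(i)\otimes(\gr A)(j)$ is exactly $\sum_{i+j=n-1}H_i\otimes A_j$, not the larger $H_{n-1}\otimes A+H\otimes A_{n-1}$, so $\rho(a)\in H_0\otimes A_n+\sum_{i+j=n-1}H_i\otimes A_j\subset H_{n-1}\otimes A$ for $n\geq 1$, which gives $a\in A_{n-1}$ immediately with no coassociativity argument needed.
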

\begin{proof}
Since $\gr H$ is coradically graded and $\gr A$ is Loewy graded, we have
$(\gr A)(0) = (\gr A)_0 = \rho^{-1}((\gr H)_0 \otimes \gr A) = \rho^{-1}(\corad{\gr H} \otimes \gr A)$.
By Lemma~\ref{socorad}, $\rho^{-1}(\corad{\gr H} \otimes \gr A) = \soc{\gr A}$.
\end{proof}

The finite-dimensional case of the following lemma follows from \cite[Corollary~4.5]{M10}.
For pointed Hopf algebras, the finite-dimensional assumption is not required.
\begin{lemma}\label{soclerightHsimple}
Let $H$ be a pointed Hopf algebra and let $A$ be an $H$-comodule algebra.
Then the following are equivalent:
\begin{itemize}
    \item [\textup{(i)}] $A$ is right $H$-simple.
    \item [\textup{(ii)}] $\soc{A}$ is right $\corad{H}$-simple.
    \item [\textup{(iii)}] $\gr A$ is right $\gr H$-simple.
\end{itemize}
\end{lemma}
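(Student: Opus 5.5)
The plan is to reduce all three conditions to criterion (iii) of Proposition~\ref{eqrightsimplicity}. That proposition applies to $A$ over $H$, to $\soc{A}$ over $\corad{H}=\Bbbk G(H)$, and to $\gr A$ over $\gr H$. Each of these Hopf algebras is pointed: $\Bbbk G(H)$ trivially, and $\gr H$ because its coradical is $(\gr H)(0)=H_0=\Bbbk G(H)$. So each simplicity condition is equivalent to: the algebra is nonzero and every nonzero homogeneous element of group-like degree is invertible. Since $H$ is pointed, $\corad{H}=\Bbbk G(H)$ is a Hopf subalgebra, so the Loewy series is an algebra filtration. Hence $\soc{A}=A_0$ is a subalgebra of $A$, and it is a $\corad{H}$-comodule algebra by Lemma~\ref{socorad}.

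Next we identify the degree-$g$ components. For $g\in G(H)$ we have $A_g\subset\rho^{-1}(\corad{H}\otimes A)=\soc{A}$, so $A_g=(\soc{A})_g$, where the right side is computed for the $\corad{H}$-coaction. Likewise, $(\gr A)_g\subset\soc{\gr A}=(\gr A)(0)=A_0$ by Lemma~\ref{socgrA}, and the $\gr H$-coaction on degree $0$ is the $\corad{H}$-coaction on $\soc{A}$. Hence
\[
(\gr A)_g=(\soc{A})_g=A_g ,
\]
and the degree-$0$ part of $\gr A$ is the algebra $\soc{A}$.

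We then compare invertibility in each ring. For (ii)$\Rightarrow$(i): an inverse in $\soc{A}$ is an inverse in $A$. For (i)$\Rightarrow$(ii): if $0\neq a\in A_g$ has inverse $b$ in $A$, then $\rho(b)=\rho(a)^{-1}=g^{-1}\otimes b$. Thus $b\in A_{g^{-1}}\subset\soc{A}$, so $a$ is invertible in $\soc{A}$. For (ii)$\Leftrightarrow$(iii): $(\gr A)(0)\cong\soc{A}$ is a subalgebra of $\gr A$, which settles (ii)$\Rightarrow$(iii). For the converse, an inverse $b$ of $a$ in $\gr A$ again lies in $(\gr A)_{g^{-1}}$, because $\rho_{\gr A}$ is an algebra map; so $b\in(\gr A)(0)=\soc{A}$.

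Nonzeroness is preserved: $A\neq0$ iff $\soc{A}\neq0$ by Lemma~\ref{comodoverpointed}, and $\gr A\neq0$ iff $A\neq0$ because the Loewy series is exhaustive ($H=\bigcup H_n$).

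The main obstacle is the bookkeeping in $\gr A$. We must check that $\gr H$ is pointed with $G(\gr H)=G(H)$, and that the $\gr H$-coaction on $(\gr A)(0)$ agrees with the original coaction on $\soc{A}$; both follow since everything lives in filtration degree $0$. The rest is formal.
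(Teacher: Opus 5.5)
Your proof is correct. The (i)$\Leftrightarrow$(ii) half is exactly the paper's argument: $A_g\subset\soc{A}$, an inverse of $a\in A_g$ lies in $A_{g^{-1}}\subset\soc{A}$, and Proposition~\ref{eqrightsimplicity} is applied over $H$ and over $\corad{H}$.

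For (ii)$\Leftrightarrow$(iii) you take a different route. The paper uses its graded criterion, Proposition~\ref{gradedrightsimple}: for any $\mathbb{N}_0$-graded Hopf algebra, $L$ is right $H$-simple iff $L(0)$ is right $H(0)$-simple and $\soc{L}\subset L(0)$. Lemma~\ref{socgrA} makes the socle condition automatic for $\gr A$, so the equivalence follows from $(\gr A)(0)=\soc{A}$ and $(\gr H)(0)=\corad{H}$.

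You instead apply the invertibility criterion a third time, to $\gr A$ over $\gr H$. In effect you rerun (i)$\Leftrightarrow$(ii) for the pair $(\gr H,\gr A)$, then use that $\soc{\gr A}=(\gr A)(0)=\soc{A}$ carries the same coaction and multiplication. This needs the extra fact that $\gr H$ is pointed with $G(\gr H)=G(H)$. That fact is already implicit in Lemma~\ref{socgrA}, since coradical gradedness gives $\corad{\gr H}=(\gr H)(0)=\Bbbk G(H)$, so it costs almost nothing.

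The trade-off: your argument is uniform and purely elementwise. The paper's reuses an ideal-theoretic criterion that does not require the graded Hopf algebra to be pointed.

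One minor point: showing $\gr A\neq 0$ does not need exhaustiveness of the Loewy series, since $1\in A_0=(\gr A)(0)$.
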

\begin{proof}
We show (i)$\Leftrightarrow$(ii).
For $g\in G(H)$ and $0  \neq a\in A_g$, $\Bbbk a$ is a simple $H$-comodule,
and hence $A_g\subset \soc{A}$. So $A_g = (\soc{A})_g$.
Moreover, any inverse of $a\in A_g$ is in $A_{g^{-1}}\subset \soc{A}$
and $G(H)=G(\corad{H})$.
Therefore, the conditions (i) and (ii) are equivalent by Proposition~\ref{eqrightsimplicity}.

The equivalence (ii)$\Leftrightarrow$(iii) holds by Proposition~\ref{gradedrightsimple} and Lemma~\ref{socgrA}.
\end{proof}


We now prove the following Lagrange-type theorem for right $H$-simple $H$-comodule algebras over a pointed Hopf algebra $H$.
\begin{theorem}\label{prHLag}
Let $H$ be a finite-dimensional pointed Hopf algebra.
For every right $H$-simple $H$-comodule algebra $A$ with $A^{\mathrm{co}H} = \Bbbk$,
$\dim A <\infty$ and $\dim A\mid \dim H$.
\end{theorem}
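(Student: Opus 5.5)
The plan is to pass to the associated graded objects and use the freeness theorem there. Since $H$ is pointed, $\corad{H}=\Bbbk G(H)$ is a Hopf subalgebra. So the coradical filtration of $H$ and the Loewy series of $A$ give a graded Hopf algebra $\gr H$ with $(\gr H)(0)=\Bbbk G$, where $G=G(H)$ is finite. They also give a graded $\gr H$-comodule algebra $\gr A$. We have $\dim \gr H=\dim H$, and since the Loewy series of $A$ is exhaustive ($H=\bigcup H_n$), $\dim \gr A=\dim A$ whenever either side is finite.

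The first step is to transfer the hypotheses. By Lemma~\ref{soclerightHsimple}, $\gr A$ is right $\gr H$-simple. For the coinvariants, $(\gr A)(0)=A_0=\soc{A}$ (Lemma~\ref{socgrA}), and its $\Bbbk G$-coinvariants are $A_e=A^{\mathrm{co}H}=\Bbbk$. The coinvariants of $\gr A$ lie in the socle, hence in $(\gr A)(0)$, so they equal $\Bbbk$.

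Theorem~\ref{structgrrsimp} then embeds $L:=\gr A$ as a graded comodule subalgebra of $X:=\coinv{(\gr H)}\#({}_\psi\Bbbk F)$, with $F\le G$. In particular $\dim L\le \dim X=(\dim H/|G|)\,|F|<\infty$, which gives $\dim A<\infty$.

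For divisibility, $X$ is an object of $\HM{\gr H}{L}$ via right multiplication. By Theorem~\ref{mainresult}, applied to the pointed Hopf algebra $\gr H$ and the right simple algebra $L$, $X$ is free over $L$. Hence $\dim L$ divides $\dim X=\dim H\cdot |F|/|G|$. This is not yet $\dim L\mid\dim H$, and closing this gap is the main obstacle.

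I would try to improve the object so that it has dimension exactly $\dim H$, or a divisor of it. One option is to induce up along $F\subset G$: consider $Y:=\coinv{(\gr H)}\otimes {}_\psi\Bbbk F\otimes_{\Bbbk F}\Bbbk G$, or more cleanly $\bigoplus_{[g]\in G/F}\twg{X}{g}$ (since $\twg{(-)}{g}$ preserves $\HM{\gr H}{L}$). Each summand is free over $L$ and has the same dimension as $X$, giving $\dim L\mid [G:F]\dim X=\dim H$. With $[G:F]$ copies, the total dimension is $[G:F]\cdot(\dim H/|G|)\cdot|F|=\dim H$, and a direct sum of free $L$-modules is free. So $\dim L$ divides $\dim H$, and therefore $\dim A=\dim \gr A$ divides $\dim H$.

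The points needing care are checking that $\dim X$ is computed correctly: $\dim\coinv{(\gr H)}=\dim H/|G|$ follows from $\gr H\cong\coinv{(\gr H)}\#\Bbbk G$. One also needs that the coinvariants of $\gr A$ are really concentrated in degree $0$, which holds because coinvariants are simple subcomodules and so lie in the socle.
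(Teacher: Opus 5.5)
Your proposal is correct and follows essentially the paper's own proof: you embed $\gr A$ into $X=\coinv{(\gr H)}\#({}_\psi\Bbbk F)$ via the graded structure theorem, then use freeness (Theorem~\ref{mainresult}, i.e.\ Corollary~\ref{subfree}) to get $\dim\gr A\mid\dim X$. The ``main obstacle'' you identify is not an obstacle: $\dim X=\dim\coinv{(\gr H)}\cdot|F|$ divides $\dim\coinv{(\gr H)}\cdot|G(H)|=\dim H$ by Lagrange's theorem, so transitivity of divisibility finishes directly, as in the paper, and your direct sum of $[G:F]$ twisted copies of $X$ is valid but unnecessary.
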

\begin{proof}
First, $\gr A$ is right $\gr H$-simple by Lemma~\ref{soclerightHsimple}.
Moreover, we have $((\gr A)(0))^{{\mathrm{co}} (\gr H)(0)}  = A_0^{{\mathrm{co}} H_0} \subset A^{{\mathrm{co}} H} = \Bbbk$.
Then $\gr  A\subset \coinv{(\gr H)}\# ({}_\psi \Bbbk F)$ as a $\gr H$-comodule algebra for some subgroup $F$ of $G(H)$ and some $2$-cocycle $\psi$ of $F$ by Proposition~\ref{embedHckF}.
Thus, $\coinv{(\gr H)}\# ({}_\psi \Bbbk F)$ is a free $\gr A$-module by Corollary~\ref{subfree}.
Then $\dim \gr A \mid \dim (\coinv{(\gr H)}\# ({}_\psi \Bbbk F))$.
Moreover, 
\[\dim (\coinv{(\gr H)}\# ({}_\psi \Bbbk F)) = \dim \coinv{(\gr H)} \cdot |F|\]
divides
\[\dim \coinv{(\gr H)} \cdot |G(H)|  = \dim (\coinv{(\gr H)}\# \Bbbk G(H))\]
by Lagrange's theorem.
We also have $\coinv{(\gr H)}\# \Bbbk G(H)\cong \gr H$.
Therefore, $\dim A = \dim \gr A \mid \dim \gr H = \dim H$.
\end{proof}

\begin{corollary}
Assume the base field $\Bbbk$ is algebraically closed.
Let $H$ be a finite-dimensional pointed Hopf algebra.
For every finite-dimensional right $H$-simple $H$-comodule algebra $A$,
$\dim A\mid \dim H$.
\end{corollary}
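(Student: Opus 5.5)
The plan is to reduce to Theorem~\ref{prHLag}. That requires two things: $A^{\mathrm{co}H}=\Bbbk$, and the standing hypotheses of that theorem.

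\emph{Step 1: trivial coinvariants.} Since $A$ is finite-dimensional, its subspace $A^{\mathrm{co}H}$ is finite-dimensional. Because $\Bbbk$ is algebraically closed and $A$ is right $H$-simple, Proposition~\ref{ACFcoin} applies directly and gives $A^{\mathrm{co}H}=\Bbbk$.

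\emph{Step 2: apply the Lagrange-type theorem.} Now $H$ is finite-dimensional and pointed, and $A$ is right $H$-simple with $A^{\mathrm{co}H}=\Bbbk$. Theorem~\ref{prHLag} therefore yields $\dim A\mid \dim H$.

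There is no substantive obstacle. The only point to check is that Proposition~\ref{ACFcoin} needs only finite-dimensionality of the coinvariants, not of $A$, and this is supplied trivially by the assumption $\dim A<\infty$. Note also that finite-dimensionality of $A$ is used solely in Step 1; it is not needed in Step 2, since Theorem~\ref{prHLag} itself proves $\dim A<\infty$.
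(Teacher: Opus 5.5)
Your proposal is correct and follows the paper's own proof exactly: you use Proposition~\ref{ACFcoin} (with $\dim A<\infty$ guaranteeing that $A^{\mathrm{co}H}$ is finite-dimensional) to get $A^{\mathrm{co}H}=\Bbbk$, and then apply Theorem~\ref{prHLag}.
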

\begin{proof}
By Proposition~\ref{ACFcoin} and Theorem~\ref{prHLag}.
\end{proof}

\appendix

\section{Further divisibility results}\label{classidimH/Gprime}

Let $H$ be a finite-dimensional pointed Hopf algebra and let $A$ be a finite-dimensional right $H$-simple $H$-comodule algebra.
Then $\dim \soc{A}$ divides $\dim A$ by Corollary~\ref{subcomodfree}.
So we can consider the following statement.
\begin{proposition}\label{divdiv}
The following divisibility holds:
\[\frac{\dim A}{\dim \soc{A}} \mid \frac{\dim H}{\dim \corad{H}}.\]
\end{proposition}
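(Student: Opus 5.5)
The plan is to pass to the associated graded objects, as in the proof of Theorem~\ref{prHLag}. The difference is that I will embed $\gr A$ into the biproduct with its own degree-zero part, instead of into one built from ${}_\psi\Bbbk F$. This avoids any assumption on coinvariants.

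First, since $H$ is pointed, $\corad{H}=\Bbbk G(H)$ is a Hopf subalgebra. Hence $\gr H$ is a graded Hopf algebra with $(\gr H)(0)=\corad{H}=\Bbbk G(H)$, and $\gr A$ is a graded $\gr H$-comodule algebra. The dimensions agree: $\dim \gr A=\dim A$ and $\dim \gr H=\dim H$. Moreover
\[(\gr A)(0)=A_0=\soc{A},\]
so $\dim (\gr A)(0)=\dim\soc{A}$. By Lemma~\ref{soclerightHsimple}, $\gr A$ is right $\gr H$-simple.

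Next, apply Lemma~\ref{embedgradesimpcomodalg} to $L:=\gr A$ over the graded Hopf algebra $\gr H$. This gives an injective graded $\gr H$-comodule algebra map
\[\gr A\hookrightarrow X:=\coinv{(\gr H)}\#(\gr A)(0).\]
Since $\gr H$ is pointed and $\gr A$ is right $\gr H$-simple, Corollary~\ref{subfree} shows that $X$ is a free $\gr A$-module. As everything is finite-dimensional, this yields
\[\dim A=\dim\gr A \mid \dim X=\dim\coinv{(\gr H)}\cdot \dim\soc{A}.\]

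Finally, the biproduct decomposition $\gr H\cong \coinv{(\gr H)}\#\Bbbk G(H)$ gives
\[\dim\coinv{(\gr H)}=\frac{\dim H}{|G(H)|}=\frac{\dim H}{\dim\corad{H}}.\]
By Corollary~\ref{subcomodfree}, $\dim\soc{A}$ divides $\dim A$. Writing $\dim A=\dim\soc{A}\cdot q$ with $q\in\mathbb{N}$, the divisibility above becomes $\dim\soc{A}\cdot q\mid \dim\soc{A}\cdot\frac{\dim H}{\dim\corad{H}}$. Cancelling $\dim\soc{A}$ gives $q\mid \dim H/\dim\corad{H}$, which is the claim.

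The only delicate point I expect is checking the identification $(\gr A)(0)=\soc{A}$, i.e.\ that $A_0=\soc{A}$ is the degree-zero part of the Loewy grading. This follows from Lemma~\ref{socorad} and the definition $A_{-1}=0$. Everything else is a direct application of the earlier results.
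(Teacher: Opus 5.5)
Your proof is correct and follows the paper's own argument: you embed $\gr A$ into $\coinv{(\gr H)}\#\soc{A}$ via Lemmas~\ref{embedgradesimpcomodalg} and~\ref{soclerightHsimple}, get freeness from Corollary~\ref{subfree}, and compare dimensions using $\gr H\cong\coinv{(\gr H)}\#\corad{H}$. You also spell out two steps the paper leaves implicit, namely that $\gr H$ is pointed (needed to invoke Corollary~\ref{subfree}) and that $\dim\soc{A}$ can be cancelled because it divides $\dim A$ by Corollary~\ref{subcomodfree}.
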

\begin{proof}
By Lemmas~\ref{embedgradesimpcomodalg} and~\ref{soclerightHsimple},
we have $\gr  A\subset \coinv{(\gr H)}\# \soc{A}$ as a $\gr H$-comodule algebra.
Thus, $\coinv{(\gr H)}\# \soc{A}$ is a free $\gr A$-module by Corollary~\ref{subfree}.
Then we have $\dim \gr A \mid \dim (\coinv{(\gr H)}\# \soc{A})$.
We also have $\coinv{(\gr H)}\# \corad{H}\cong \gr H$. Therefore,
\[\frac{\dim \gr A}{\dim \soc{A}} \mid \dim \coinv{(\gr H)} = \frac{\dim \gr H}{\dim \corad{H}}.\]
Since $\dim A = \dim \gr A$ and $\dim H = \dim \gr H$, the statement holds.
\end{proof}

\begin{corollary}
Suppose $\dim H/\dim \corad{H}$ is a prime number.
Then one of the following holds:
\begin{itemize}
    \item $A = \soc{A}$.
    \item $\HM{H}{A}$ is semisimple and $\gr A\cong \coinv{(\gr H)}\# \soc{A}$.
\end{itemize}
In particular, if $A^{\mathrm{co}H} = \Bbbk$,
then one of the following holds:
\begin{itemize}
    \item $A \cong {}_\psi \Bbbk F$,
    \item $\gr A\cong \coinv{(\gr H)}\# {}_\psi \Bbbk F$,
\end{itemize}
where $F$ is some subgroup of $G(H)$ and $\psi$ is some $2$-cocycle of $F$.
\end{corollary}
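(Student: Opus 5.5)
The plan is to use the divisibility of Proposition~\ref{divdiv} and a two-case analysis, with semisimplicity detected by Theorem~\ref{fincasesemisimple}. Put $p := \dim H/\dim \corad{H}$, which is prime by assumption. By Proposition~\ref{divdiv}, $\dim A/\dim \soc{A}$ divides $p$, so it equals $1$ or $p$.

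\emph{Case $\dim A/\dim\soc{A}=1$.} Since $\soc{A}\subset A$ and both are finite-dimensional, we get $A=\soc{A}$ at once.

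\emph{Case $\dim A/\dim\soc{A}=p$.} Then $\dim A/\dim \soc{A} = \dim H/\dim\corad{H}$, and Theorem~\ref{fincasesemisimple} gives that $\HM{H}{A}$ is semisimple. For the graded statement, I would reuse the embedding from the proof of Proposition~\ref{divdiv}:
\[
\gr A\subset \coinv{(\gr H)}\#\soc{A}
\]
as a $\gr H$-comodule algebra, coming from Lemmas~\ref{embedgradesimpcomodalg} and~\ref{soclerightHsimple}. Here $(\gr A)(0)=\soc{\gr A}$ is identified with $A_0=\soc{A}$ by Lemma~\ref{socgrA}. The dimensions then match:
\[
\dim(\coinv{(\gr H)}\#\soc{A}) = \dim\coinv{(\gr H)}\cdot\dim\soc{A} = p\dim\soc{A} = \dim A=\dim\gr A.
\]
Here $\dim\coinv{(\gr H)} = \dim \gr H/\dim\corad{H} = p$ by $\gr H\cong \coinv{(\gr H)}\#\corad{H}$. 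An injective map between finite-dimensional spaces of equal dimension is bijective, so the embedding is an isomorphism of $\gr H$-comodule algebras.

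For the ``in particular'' part, assume $A^{\mathrm{co}H}=\Bbbk$. The socle $\soc{A}=A_0$ is a right $\corad{H}$-simple $\Bbbk G(H)$-comodule algebra by Lemma~\ref{soclerightHsimple}. Its coinvariants are contained in $A^{\mathrm{co}H}=\Bbbk$, because an element of $A_0$ is $\Bbbk G(H)$-coinvariant exactly when it is $H$-coinvariant. So Proposition~\ref{rightKGsimple} gives $\soc{A}\cong {}_\psi\Bbbk F$. Substituting this into the two cases yields $A\cong{}_\psi\Bbbk F$ or $\gr A\cong\coinv{(\gr H)}\#{}_\psi\Bbbk F$.

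The only delicate point is making sure the embedding of Lemma~\ref{embedgradesimpcomodalg} lands in $\coinv{(\gr H)}\#(\gr A)(0)$ with $(\gr A)(0)\cong \soc{A}$ as a $\corad{H}$-comodule algebra. This identification is $A_0=A_0/A_{-1}$, and it is already used in Proposition~\ref{divdiv}, so I expect no real obstacle.
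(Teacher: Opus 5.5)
Your proposal is correct and follows the paper's proof essentially step for step. Both arguments use the dichotomy from Proposition~\ref{divdiv} and obtain semisimplicity from Theorem~\ref{fincasesemisimple}; both then use the dimension count to force the embedding $\gr A\subset\coinv{(\gr H)}\#\soc{A}$ to be an isomorphism, and apply Proposition~\ref{rightKGsimple} to $\soc{A}$ when $A^{\mathrm{co}H}=\Bbbk$ (you even spell out the right $\corad{H}$-simplicity and trivial coinvariants of $\soc{A}$, which the paper leaves implicit).
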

\begin{proof}
By the assumption and Proposition~\ref{divdiv},
\[\dim A = \dim \soc{A} \quad \text{or} \quad \frac{\dim A}{\dim \soc{A}} = \frac{\dim H}{\dim \corad{H}}.\]
In the latter case, $\HM{H}{A}$ is semisimple by Theorem~\ref{fincasesemisimple}.
We also have $\dim \gr A =  \dim \coinv{(\gr H)} \cdot \dim\soc{A}$.
By Lemmas~\ref{embedgradesimpcomodalg} and~\ref{soclerightHsimple},
$\gr  A\subset \coinv{(\gr H)}\# \soc{A}$, and hence we have $\gr  A\cong \coinv{(\gr H)}\# \soc{A}$.

If $A^{\mathrm{co}H} = \Bbbk$, then $\soc{A}\cong {}_\psi \Bbbk F$
for some subgroup $F$ of $G(H)$ and some $2$-cocycle $\psi$ of $F$ by Proposition~\ref{rightKGsimple}.
So the last assertion holds.
\end{proof}

\end{document}